\newcommand{\R}{\mathbb R}
\newcommand{\bA}{\mathbf A}
\newcommand{\bI}{\mathbf I}
\newcommand{\bP}{\mathbf P}
\newcommand{\bV}{\mathbf V}
\newcommand{\blf}{\mathbf f}
\newcommand{\bn}{\mathbf n}
\newcommand{\be}{\mathbf e}
\newcommand{\bu}{\mathbf u}
\newcommand{\bU}{\mathbf U}
\newcommand{\bv}{\mathbf v}
\newcommand{\T}{\mathcal T}
\newcommand{\divG}{{\mathop{\,\rm div}}_{\Gamma}}
\newcommand{\gradG}{\nabla_{\Gamma}}
\newcommand{\nablaG}{\nabla_{\Gamma}}
\newcommand{\cT}{\mathcal T}
\newcommand{\OGamma}{\Omega^\Gamma_h}
\newcommand{\tr}{{\rm tr}}
\newcommand{\lPlane}{L_{\vect x_0}}
\newtheorem{assumption}{Assumption}[section]
\newtheorem{remark}{Remark}[section]
\newcommand{\vect}[1]{\boldsymbol{\mathbf{#1}}}
\newcommand*\diff{\mathop{}\!\mathrm{d}}
\newcommand{\sphere}{{\Gamma_{\text{sph}}}}
\newcommand{\tor}{{\Gamma_{\text{tor}}}}
\newcommand{\lsphere}{{\phi_{\text{sph}}}}
\newcommand{\ltor}{{\phi_{\text{tor}}}}
\newcommand{\HOne}{{H^1}}
\newcommand{\LTwo}{{L^2}}
\newcommand{\LTwoSpace}[1][\Gamma]{{L^2\left({#1}\right)}}
\newcommand{\includegraphicsw}[2][1.]{\includegraphics[width=#1\linewidth]{#2}}
\begin{document}
\noindent\footnotesize{Published in \href{http://www.ams.org/mcom/}{Mathematics of Computation}, DOI:~\href{https://doi.org/10.1090/mcom/3551 }{10.1090/mcom/3551}}	
\title{Inf-sup stability of the trace $\vect P_2$--$P_1$ Taylor--Hood elements for surface PDE\MakeLowercase{s}}
\author{
Maxim A. Olshanskii\thanks{Department of Mathematics, University of Houston, Houston, Texas 77204 (molshan@math.uh.edu).}
\and Arnold Reusken\thanks{Institut f\"ur Geometrie und Praktische  Mathematik, RWTH-Aachen University, D-52056 Aachen, Germany (reusken@igpm.rwth-aachen.de).}
\and
Alexander Zhiliakov\thanks{Department of Mathematics, University of Houston, Houston, Texas 77204 (alex@math.uh.edu).}
}
\maketitle

\begin{abstract}
	The paper studies a geometrically unfitted finite element method (FEM), known as trace FEM or cut FEM, for the numerical solution of the Stokes system posed on a closed smooth surface. A trace FEM based on standard Taylor--Hood (continuous $\vect P_2$--$P_1$) bulk elements is proposed. A so-called volume normal derivative stabilization, known from the literature on trace FEM, is an essential ingredient of this method. The key result proved in the paper is an inf-sup stability of the trace $\vect P_2$--$P_1$ finite element pair, with the stability constant uniformly bounded with respect to the discretization parameter and the position of the surface in the bulk mesh. Optimal order convergence of a consistent variant of the finite element method follows from this new stability result and interpolation properties of the trace FEM. Properties of the method are illustrated with numerical examples.
\end{abstract}
\begin{keywords}
	Surface Stokes problem; Trace finite element method; Taylor--Hood elements; Material surfaces; Fluidic membranes
\end{keywords}

\section{Introduction}

Surface fluid equations arise in continuum models of thin fluidic layers such as liquid films and plasma membranes.
The Euler and the Navier--Stokes equations posed on manifolds is also a classical topic of analysis~\cite{ebin1970groups,Temam88,taylor1992analysis,arnold1999topological,mitrea2001navier}.
The literature on numerical analysis or numerical simulations of fluid systems  on manifolds, however,  is still  rather scarce; see  \cite{nitschke2012finite,barrett2014stable,reuther2015interplay,reusken2018stream,reuther2018solving,fries2018higher,olshanskii2018finite,olshanskii2019penalty} for recent contributions. Among those papers only \cite{olshanskii2018finite,olshanskii2019penalty} addressed an unfitted finite element method for the surface Stokes and the surface Navier--Stokes systems.
The choice of the geometrically  unfitted discretization (instead of the fitted surface FEM  based on direct triangulation of surfaces)
is motivated by the numerical modelling of  deformable material interfaces. For interfaces featuring lateral fluidity  this leads to systems of PDEs posed on evolving surfaces $\Gamma(t)$~\cite{arroyo2009,Gigaetal,Jankuhn1}, for which \emph{unfitted} discretizations have certain attractive properties concerning flexibility (no remeshing) and robustness (w.r.t.   handling of strong deformations and topological singularities). The Stokes problem on a steady surface arises as an auxiliary problem in such simulations, if one splits the system into (coupled) equations for radial and tangential motions~\cite{Jankuhn1}. In the literature such  unfitted methods are  trace FEM~\cite{olshanskii2016trace} or cut FEM~\cite{cutFEM}.

The $\vect P_2$--$P_1$ continuous Taylor--Hood element is one of the most popular FE pairs  for incompressible fluid flow problems. Surface variants of this pair have been used for surface Navier-Stokes equations in the recent papers \cite{reuther2018solving,fries2018higher}. In those papers the fitted surface FE approach is used. There is no literature in which surface variants of the  Taylor--Hood elements are studied in the context of unfitted discretizations. Furthermore, there is no literature in which rigorous stability or error analysis of surface variants of Taylor--Hood elements is presented.

In this paper we propose a surface variant of the Taylor--Hood element for an unfitted discretization of the surface Stokes problem. It turns out that a particular stabilization technique is essential, cf. below. A second main topic of the paper is a rigorous analysis of this method. We show that the $\vect P_2$--$P_1$ trace FEM that we propose is stable and has optimal order convergence in the surface $H^1$ and $L^2$ norms for velocity and the surface $L^2$-norm for pressure. The key result proved in section~\ref{sec:analysis} is the uniform inf-sup stability property for the trace spaces of   $\vect P_2$--$P_1$ elements. Hence, this paper contains the first optimal rigorous error bounds for a surface Stokes problem discretized using a surface variant of the Taylor--Hood pair.

It is standard for trace/cut FEM to add volumetric consistent stabilization terms  to improve algebraic properties of  the algebraic system, which results from the natural choice of basis functions corresponding to the bulk nodal basis. Here we use the volume normal stabilization~\cite{burman2016cutb,grande2016higher} for this purpose. It turns out that this stabilization (for pressure) is also crucial for the central inf-sup stability result. Numerical results demonstrate that our analysis is sharp in the sense that the discretization without this stabilization is not inf-sup stable.

The remainder of the paper is organized as follows. In section~\ref{sec:model} we collect necessary notations of tangential differential calculus and recall the mathematical model. Section~\ref{sec:FE} introduces the finite element method. The main theoretical result of the paper on  stability of trace $\vect P_2$--$P_1$ element is proved in section~\ref{sec:analysis}; see Theorem~\ref{lemma2}.  Section~\ref{sec:error} proceeds with the error analysis of the finite element method.
Results of numerical experiments, which illustrate relevant properties of the proposed discretization method, are presented in Section~\ref{sec:num}. Conclusions are given in the closing section~\ref{sec:concl}.

\section{Mathematical model}\label{sec:model}

In this section we recall the system of surface Stokes equations, which models the slow tangential motion of a surface fluid in a state of geometric equilibrium. For the purpose of further numerical analysis, it is convenient to formulate the problem in terms of tangential calculus. For derivations and further properties of fluid equations on manifolds see \cite{taylor1992analysis,arnold1999topological,mitrea2001navier} (for equations  written in intrinsic variables) and \cite{arroyo2009,Gigaetal,Jankuhn1} (for formulations using tangential calculus).

We consider a closed smooth surface $\Gamma\subset\mathbb{R}^3$ with the outward-pointing normal field $\bn$ and a (sufficiently small) three-dimensional neighborhood $\mathcal{O}(\Gamma)$.  For a scalar function $p:\, \Gamma \to \mathbb{R}$ or a vector function $\bu:\, \Gamma \to \mathbb{R}^3$  we assume any smooth extension $p^e:\,\mathcal{O}(\Gamma)\to\mathbb{R}$, $\bu^e:\,\mathcal{O}(\Gamma)\to\mathbb{R}^3$ of $p$ and $\bu$ from $\Gamma$ to its neighborhood $\mathcal{O}(\Gamma)$. For example, one can think of extending $p$ and $\bu$  with constant values  along the normal.
The surface gradient and covariant derivatives on $\Gamma$ are then defined as $\nablaG p \coloneqq \bP\nabla p^e$ and  $\nabla_\Gamma \bu\coloneqq  \bP \nabla \bu^e \bP$, with $\bP \coloneqq \vect I - \bn \bn^T$ the orthogonal projection onto the tangential plane (at $\vect x \in \Gamma$). The definitions  of surface gradient and covariant derivatives are  independent of a particular smooth extension of $p$ and $\bu$ off $\Gamma$.
The surface rate-of-strain tensor \cite{GurtinMurdoch75} on $\Gamma$ is given by
\begin{equation} \label{strain}
	E_s(\bu)\coloneqq  \frac12 \bP \big(\nabla \bu^e + (\nabla \bu^e)^T\big)\bP = \frac12(\nabla_\Gamma \bu + \nabla_\Gamma \bu^T).
\end{equation}
The surface divergence operators for a vector $\bv: \Gamma \to \R^3$ and
a tensor $\bA: \Gamma \to \mathbb{R}^{3\times 3}$ are defined as
\[
 \divG \bv  \coloneqq  \tr (\gradG \bv), \qquad
 \divG \bA  \coloneqq  \left( \divG (\be_1^T \bA),\,
               \divG (\be_2^T \bA),\,
               \divG (\be_3^T \bA)\right)^T,
               \]
with $\be_i$ the $i$th basis vector in $\R^3$.

The surface Stokes problem reads: For a given tangential force vector $\mathbf{f} \in L^2(\Gamma)^3$, i.e. $\mathbf{f}\cdot\bn=0$ holds, and
source term $g\in L^2(\Gamma)$, with $\int_\Gamma g\diff{s}=0$, find a tangential velocity field $\bu:\, \Gamma \to \R^3$, $\bu\cdot\bn =0$, and a surface fluid pressure $p:\, \Gamma \to \R$  such that
\begin{align} 
  - 2\,\bP \divG (E_s(\bu))+\alpha \bu +\nabla_\Gamma p &=  \blf \quad \text{on}~\Gamma,  \label{strongform-1} \\
  \divG \bu & =g \quad \text{on}~\Gamma, \label{strongform-2}
\end{align}
where $\alpha\ge0$  is a real parameter. The steady surface  Stokes problem corresponds to $\alpha=0$, while $\alpha>0$ leads to a generalized surface Stokes problem, which results from an implicit time integration applied to the time dependent equations. The body force $\blf$ models  exterior forces, such as a gravity force, and tangential stresses exerted by an ambient medium.
The  source term $g$  is non-zero, for example, if  \eqref{strongform-1}--\eqref{strongform-2} is used as an auxiliary problem for the modeling of evolving fluidic interfaces. In that case, the inextensibility condition reads $\divG \bu_T =-u_N\kappa$, where $\kappa$ is the mean curvature
and $u_N$ is the normal component of the velocity. Here and further in the paper,
we use the  decomposition of a general vector field into tangential and normal components:
\begin{equation}\label{u_T_N}
\bu = \bu_T + u_N\bn,\quad \bu_T\cdot\bn=0.
\end{equation}
For the derivation of the Navier--Stokes equations  for evolving fluidic interfaces see, e.g., \cite{Jankuhn1}.

As common for models of incompressible fluids, the pressure field is defined up to a hydrostatic mode. For $\alpha=0$
all tangentially rigid surface fluid motions,  i.e. satisfying  $E_s(\bu) = \vect 0$,
are in the kernel of the differential operators at the left-hand side of eq.~\eqref{strongform-1}.
Integration by parts implies the consistency condition for the right-hand side of eq.~\eqref{strongform-1}:
 \begin{equation}\label{constr}
 \int_\Gamma \blf \cdot \bv\diff{s}=0\quad\text{for all smooth tangential vector fields}~~\bv~~ \text{s.t.}~~ E_s(\bv)=\mathbf{0}.
 \end{equation}
This condition  is necessary for the well-posedness of problem \eqref{strongform-1}--\eqref{strongform-2} when $\alpha=0$.
In the literature a tangential vector field $\bv$ on $\Gamma$ satisfying
$E_s(\bv)=\mathbf{0}$ is known as a Killing vector field. For a smooth two-dimensional Riemannian manifold, Killing vector fields form a Lie algebra of dimension  at most 3 (cf., e.g., {Proposition III.6.5 in \cite{sakai1996riemannian}}) and the corresponding subspace plays an important role in the analysis of the surface fluid equations, cf. \cite{olshanskii2019penalty}.
It is reasonable to assume (see \cite[Remark~2.1]{olshanskii2018finite}) that either no non-trivial Killing vector field exists on
$\Gamma$ or $\alpha>0$. For the purpose of this paper, which focuses on stability properties  of certain surface finite elements,  we assume $\alpha=1$. The results that are obtained also hold (with minor
modifications) for the case $\alpha=0$, if $\Gamma$ is such that there is no non-trivial Killing vector field.
If a non-trivial Killing vector field is present  and $\alpha=0$, then an additional effort is needed as, for example, discussed in \cite{bonito2019divergence}, where an $\epsilon$-regularization of a finite element method is introduced to handle the kernel.

For the weak formulation of the surface Stokes problem \eqref{strongform-1}--\eqref{strongform-2},
we need the vector Sobolev space $\bV\coloneqq  H^1(\Gamma)^3$ equipped with the norm
\begin{equation} \label{H1norm}
	\|\bv\|_{1}\coloneqq \left(\|\bv\|^2_{L^2(\Gamma)} + \|(\nabla\bv^e)\bP\|^2_{L^2(\Gamma)}\right)^{\frac12}
\end{equation}
and its subspace of tangential vector fields
\begin{equation}   \label{defVT}
 \bV_T\coloneqq  \{\, \bv \in \bV\::\: \bv\cdot \bn =0\,\}.
\end{equation}
For $\bv \in \bV$ we will use the orthogonal decomposition into tangential and normal parts
as in \eqref{u_T_N}.
We define $L_0^2(\Gamma)\coloneqq \{\,p \in L^2(\Gamma)\::\:\int_\Gamma p \diff{s} = 0\,\}$.

Consider the continuous bilinear forms (with $\vect A : \vect B \coloneqq {\rm tr}\big(\vect A \vect B^T\big)$ for  $\vect A, \vect B\in\mathbb{R}^{3\times3}$)
\begin{align}
a(\bu,\bv) &\coloneqq \int_\Gamma (2\,E_s(\bu):E_s(\bv)+\bu\cdot\bv) \diff{s}, \quad \bu,\bv \in \bV, \label{defblfa} \\
b_T(\bv,p) &\coloneqq -\int_\Gamma p\,\divG \bv_T \diff{s},  \quad \bv \in \bV, ~p \in L^2(\Gamma). \label{defblfb}
\end{align}
Note that in the definition of $b_T(\bv,p)$ only the {tangential} component of $\bv$ is used, i.e., $b_T(\bv,p)=b_T(\bv_T,p)$ for all $\bv \in \bV$, $p\in L^2(\Gamma)$. This property motivates the notation $b_T(\cdot,\cdot)$ instead of $b(\cdot,\cdot)$.
If $p$ is from $H^1(\Gamma)$, then integration by parts yields
\begin{equation}\label{Bform}
b_T(\bv,p)=\int_\Gamma  \bv_T\cdot \gradG p \diff{s} = \int_\Gamma  \bv \cdot \gradG p \diff{s}.
\end{equation}

The weak  formulation of the surface Stokes problem \eqref{strongform-1}--\eqref{strongform-2}
reads: Find 
$(\bu_T,p) \in \bV_T \times L_0^2(\Gamma)$ 
such that
\begin{align}
	a(\bu_T,\bv_T) +b_T(\bv_T,p) &=(\blf,\bv_T) \quad \text{for all}~~\bv_T \in \bV_T, \label{Stokesweak1_1} \\
	b_T(\bu_T,q) & = (-g,q) \quad \text{for all}~~q \in L^2(\Gamma). \label{Stokesweak1_2}
\end{align}
Here $(\cdot,\cdot)$ denotes the $L^2$ scalar product on $\Gamma$.
The following surface Korn   inequality and inf-sup property were derived in \cite[result (4.8) and Lemma~4.2]{Jankuhn1}:
Assuming $\Gamma$ is $C^2$ smooth and compact,  there exist $c_K >0$ and $c_0>0$ such that
 \begin{equation}  \label{korn1}
\|\bv_T\|_{L^2(\Gamma)}+ \|E_s(\bv_T)\|_{L^2(\Gamma)} \geq c_K \|\bv_T\|_{1} \quad \text{for all}~~\bv_T \in \bV_T,
 \end{equation}
and
\begin{equation} \label{infsup}
 \sup_{\bv_T\in{\bV_T}}\frac{b_T(\bv_T,p)}{\|\bv_T\|_{1}}  \geq c_0 \|p\|_{L^2(\Gamma)} \quad \text{for all}~~p\in L^2_0(\Gamma).
\end{equation}
The equations  \eqref{korn1} and \eqref{infsup} guarantee  the coercivity and inf-sup stability of the bilinear forms $a(\cdot,\cdot)$ and $b_T(\cdot,\cdot)$, respectively. This, in turn, implies the well-posedness of the weak formulation \eqref{Stokesweak1_1}--\eqref{Stokesweak1_2}. The unique solution of \eqref{Stokesweak1_1}--\eqref{Stokesweak1_2} is denoted by $(\bu_T^\ast, p^\ast)$. This solution has $H^2\times H^1$-regularity once $\blf$ and $g$ are suitably regular. We include a proof of this result below for a compact closed $C^2$ surface.
\begin{lemma}\label{Lemma:regularity} Assume $\Gamma\in C^2$ compact and closed, $\mathbf{f}\in L^2(\Gamma)^3$, $g\in H^1(\Gamma)$, then
 $\bu_T^\ast\in H^2(\Gamma)^3$, $p^\ast\in H^1(\Gamma)$ and $\|\bu^\ast\|_{H^2(\Gamma)}+\|p^\ast\|_{H^1(\Gamma)}\le C(\Gamma)(\|\mathbf{f}\|_{L^2(\Gamma)}+\|g\|_{H^1(\Gamma)})$.
\end{lemma}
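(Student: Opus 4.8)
The plan is to establish $H^2\times H^1$ regularity by a standard localization argument, reducing the surface Stokes system to a family of elliptic systems on planar parameter domains, to which interior and boundaryless (closed surface) elliptic regularity applies. Since $\Gamma$ is closed and compact, there are no boundary terms to worry about, which is the one genuine simplification. First I would cover $\Gamma$ by finitely many coordinate charts $\{(U_i,\phi_i)\}$ with $\phi_i:\,V_i\subset\R^2\to U_i\subset\Gamma$ a $C^2$ parametrization, together with a subordinate smooth partition of unity $\{\chi_i\}$. Writing $\bu_T^\ast=\sum_i\chi_i\bu_T^\ast$ and $p^\ast=\sum_i\chi_i p^\ast$, it suffices to prove the regularity estimate for each localized pair $(\chi_i\bu_T^\ast,\chi_i p^\ast)$, pulled back to $V_i$.

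Next I would identify the equations satisfied by the localized, pulled-back quantities. Multiplying \eqref{strongform-1}–\eqref{strongform-2} by $\chi_i$ produces lower-order commutator terms: $\chi_i$ times the surface Stokes operator applied to $\bu_T^\ast$ equals the operator applied to $\chi_i\bu_T^\ast$ plus terms involving $\gradG\chi_i$ paired with $\bu_T^\ast$, $E_s(\bu_T^\ast)$ and $p^\ast$, all of which are controlled in $L^2(\Gamma)$ by the energy bound that already follows from \eqref{korn1}, \eqref{infsup} and well-posedness, namely $\|\bu_T^\ast\|_1+\|p^\ast\|_{L^2(\Gamma)}\le C(\|\blf\|_{L^2(\Gamma)}+\|g\|_{L^2(\Gamma)})$. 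In local coordinates, $-2\bP\divG E_s(\cdot)$ becomes a second-order elliptic operator in divergence form with $C^1$ (in particular bounded, measurable, uniformly elliptic) coefficients built from the metric tensor $g^{\alpha\beta}$ and the normal field, acting on the three Cartesian components; together with $+\alpha\,\bu$ and the gradient–divergence coupling this is, after the change of variables, a Stokes-type system on $V_i\subset\R^2$ with right-hand side in $L^2$ and with the extra algebraic constraint $\bu_T^\ast\cdot\bn=0$ which couples the three components but can be carried along. Because $\chi_i$ is compactly supported in $V_i$, the localized functions extend by zero and we are in the interior-regularity setting: no boundary conditions enter.

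Then I would invoke standard elliptic/Stokes regularity: for a second-order elliptic system in divergence form on a bounded domain in $\R^2$ with $C^1$ coefficients, interior $H^2$ regularity for the velocity and $H^1$ regularity for the pressure holds, with the estimate $\|\bu\|_{H^2(V_i')}+\|p\|_{H^1(V_i')}\le C(\|\text{r.h.s.}\|_{L^2(V_i)}+\|\bu\|_{H^1(V_i)}+\|p\|_{L^2(V_i)})$ on any interior subdomain $V_i'\Subset V_i$ containing $\operatorname{supp}\chi_i$; see, e.g., the regularity theory for the Stokes problem in Temam or Galdi, adapted to variable coefficients via the difference-quotient method. One should note that the equation \eqref{strongform-2} only controls the surface divergence of $\bu_T^\ast$, not the full Euclidean divergence of the three components; however, the normal component is slaved through $u_N\equiv 0$ (we work with the tangential solution) and the identity relating $\divG\bu_T$ to derivatives of the Cartesian components plus curvature terms of class $C^1$, so the standard Stokes div–grad machinery still applies after absorbing the $C^1$ curvature factors. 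Pulling the local $H^2$ and $H^1$ bounds back to $\Gamma$ via the $C^2$ charts (which gives bounded norm-equivalences since second derivatives of the chart maps are merely $C^0$, hence bounded) and summing over $i$ yields $\|\bu_T^\ast\|_{H^2(\Gamma)}+\|p^\ast\|_{H^1(\Gamma)}\le C(\Gamma)(\|\bu_T^\ast\|_1+\|p^\ast\|_{L^2(\Gamma)})$, and combining with the energy estimate — but here I use $g\in H^1(\Gamma)$, not merely $L^2$, since $g=\divG\bu_T^\ast$ must be differentiated once to gain the second derivative on $\bu_T^\ast$, so the right-hand side of the localized elliptic problem for the divergence part genuinely needs $\|g\|_{H^1(\Gamma)}$ — gives the claimed bound.

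The main obstacle I anticipate is the handling of the pointwise constraint $\bu_T^\ast\cdot\bn=0$: unlike the flat Stokes problem, the three Cartesian components are not independent, and one must either (i) reformulate locally in terms of two tangential coordinate components, in which case the metric-dependent Christoffel symbols (only $C^0$, since $\Gamma\in C^2$) appear as coefficients — which is exactly at the borderline for $H^2$ regularity and forces a careful bookkeeping to ensure these merely-bounded coefficients multiply only lower-order terms — or (ii) keep all three components and treat $u_N=0$ as an additional equation, enlarging the system; I would take route (ii) and observe that the extra equation is compatible and does not destroy ellipticity of the principal part, so the difference-quotient argument goes through with the lower-order $C^1$ geometric terms (second fundamental form, mean curvature) moved to the right-hand side. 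The secondary technical point is verifying uniform ellipticity of the localized principal operator, which follows from compactness of $\Gamma$ and the fact that $E_s$ is, modulo the projection $\bP$, the symmetric gradient; coercivity on the quotient by Killing fields is not needed here because we have already fixed the solution.
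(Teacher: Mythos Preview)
Your localization-and-pullback strategy is a legitimate route, but it is quite different from the paper's proof and considerably more laborious. The paper avoids charts altogether by \emph{decoupling} pressure and velocity through two global identities valid on closed surfaces. For the pressure, it tests \eqref{Stokesweak1_1} with $\bv_T=\nabla_\Gamma q$, $q\in H^2(\Gamma)\cap L^2_0(\Gamma)$, and uses the identity
\[
2\!\int_\Gamma E_s(\bu_T)\!:\!E_s(\bv_T)\,ds
=\!\int_\Gamma {\rm curl}_\Gamma\bu_T\,{\rm curl}_\Gamma\bv_T\,ds
+2\!\int_\Gamma \divG\bu_T\,\divG\bv_T\,ds
-2\!\int_\Gamma K\,\bu_T\!\cdot\!\bv_T\,ds
\]
together with ${\rm curl}_\Gamma\nabla_\Gamma q=0$ and $\divG\bu_T^\ast=g$ to obtain a very weak Laplace--Beltrami equation $-(p^\ast,\Delta_\Gamma q)_{L^2}=(\mathbf g,\nabla_\Gamma q)_{L^2}$ with $\mathbf g=\blf+2\nablaG g+(2K-1)\bu_T^\ast\in L^2$; $H^2$-regularity of $\Delta_\Gamma$ then yields $p^\ast\in H^1$ directly. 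For the velocity, it invokes $2\bP\divG E_s(\bu)=\Delta_\Gamma^H\bu+\nablaG\divG\bu+2K\bu$ to rewrite the momentum equation as a Hodge--Laplace problem $-\Delta_\Gamma^H\bu_T^\ast=\mathbf g_{\bu}\in L^2$ and cites elliptic regularity for $\Delta_\Gamma^H$. This two-step reduction completely sidesteps the tangentiality-constraint difficulty you rightly flag as the main obstacle: the Hodge Laplacian acts intrinsically on tangent fields, so no overdetermined three-component system and no careful bookkeeping of $C^0$ Christoffel coefficients is needed. Your approach would still go through via your route~(i) (two tangential components; the Christoffel symbols sit only in lower-order terms, so bounded is enough for the difference-quotient argument), and it has the advantage of generalizing to surfaces with boundary; but your route~(ii) as stated is shaky, since adding $u_N=0$ to the three projected momentum equations does not produce a square system with an obviously elliptic principal symbol, and you would need the Douglis--Nirenberg framework rather than ``standard'' Stokes regularity. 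For the closed-surface setting of this paper, the curl--div--curvature decoupling is markedly cleaner.
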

\begin{proof} Since $\Gamma$ is closed (no boundary conditions) the proof splits into two steps: first we show an  $H^1$-regularity estimate for $p^\ast$ from a pressure-Laplace-Beltrami equation and next we derive velocity $H^2$-regularity from a Hodge--Laplace equation satisfied by $\bu_T^\ast$.
For the first step we use the identity (cf. \cite{reusken2018stream})
\begin{equation} \label{helprel} \begin{split}
 \int_\Gamma 2 E_s(\bu_T):E_s(\bv_T)\, ds & = \int_{\Gamma} {\rm curl}_\Gamma \bu_T\, {\rm curl}_\Gamma \bv_T \, ds +2 \int_{\Gamma} \divG \bu_T  \divG \bv_T \, ds  \\ & ~~ - 2\int_\Gamma K \bu_T \cdot \bv_T\, ds,
\end{split} \end{equation}
where $K$ is the Gaussian curvature and ${\rm curl}_\Gamma$ the scalar surface curl-operator.
We take $\bv_T=\nabla_\Gamma q$, for  $q\in H^2_\ast(\Gamma):=H^2(\Gamma) \cap L_0^2(\Gamma)$,  in \eqref{Stokesweak1_1} and use \eqref{helprel}, ${\rm curl}_\Gamma \nabla_\Gamma q=0$ and \eqref{Stokesweak1_2}, to get
\begin{equation}\label{weak_PP}
-(p^\ast,\Delta_\Gamma q)_{L^2(\Gamma)}=(\mathbf{g},\nablaG q)_{L^2(\Gamma)},\quad \text{with}~\mathbf{g}=\blf+2\nablaG g +(2K-1)\bu_T.
\end{equation}
  Thanks to the $H^2$-regularity of the Laplace--Beltrami problem  
the bilinear form $(p,q) \to -(p,\Delta_\Gamma q)_{L^2(\Gamma)}$ is infsup stable in $L^2(\Gamma)\times H^2_\ast(\Gamma)$. Furthermore, $(p,\Delta_\Gamma q)_{L^2(\Gamma)}=0$ for all $p \in L^2(\Gamma)$ implies $q=0$. Therefore, \eqref{weak_PP} is a well-posed (very) weak formulation of the pressure Laplace--Beltrami problem. Since $\mathbf{g}\in L^2(\Gamma)^3$ we have $p^\ast\in H^1(\Gamma)$ and the standard energy estimate implies the desired bound for the unique pressure solution:
$\|p^\ast\|_{H^1(\Gamma)}\le \|\mathbf{g}\|_{L^2(\Gamma)}\le C(\Gamma)(\|\mathbf{f}\|_{L^2(\Gamma)}+\|g\|_{H^1(\Gamma)})$.\\
We proceed to the second step and  employ (\cite{reusken2018stream}) the relation $ 2\,\bP \divG (E_s(\bu)) = \Delta_\Gamma^H+\gradG \divG + 2K\bu$,
with $\Delta_\Gamma^H$ the Hodge Laplacian.  Using this we rewrite (in suitable weak form) the first equation in the Stokes system as
$-\Delta_\Gamma^H\bu=\mathbf{g_u}:=\blf +(2K-1) \bu+\nablaG g-\nabla_\Gamma p$.  Now the standard elliptic regularity, cf. e.g. \cite[section 7.4]{Morrey},  implies $\|\bu\|_{H^2}\le C\|\mathbf{g_u}\|_{L^2(\Gamma)}$, and the desired bound for the velocity follows by suitably bounding each term in $\mathbf{g_u}$.
\end{proof}

For the discrete surface Stokes problem the situation is similar to the planar case in the following sense: While the coercivity of the finite element velocity  form  follows immediately from the analogous  property of the original formulation, the inf-sup stability of the $b$-form for a given pair of finite element spaces is a delicate question. Here we address this question for the unfitted (trace) variant of the $\vect P_2$--$P_1$ Taylor--Hood elements.
First we introduce the finite element discretization of~\eqref{Stokesweak1_1}--\eqref{Stokesweak1_2}.

\section{Finite element discretization}\label{sec:FE}

We apply an unfitted finite element method, the trace FEM~\cite{olshanskii2016trace}, for the discretization of \eqref{Stokesweak1_1}--\eqref{Stokesweak1_2}.
This method uses a surface-independent ambient (bulk) mesh of an immersed manifold to discretize a PDE.
To formulate the method, consider  a fixed polygonal domain  $\Omega \subset \R^3$ that strictly contains $\Gamma$.
Assume a family of \emph{shape regular} tetrahedral triangulations $\{\T_h\}_{h >0}$ of $\Omega$.
The subset of tetrahedra that have a nonzero intersection with $\Gamma$ is collected in the set
denoted by $\T_h^\Gamma$. Tetrahedra from $\T_h^\Gamma$ form our active computational mesh.
For $h_T \coloneqq \mbox{diam}(T)$ we denote $h \coloneqq \max_{T\in\T_h^{\Gamma}} h_T$.  In the numerical section we denote the typical meshsize of~$\T_h^\Gamma$ by~$h$.
For the analysis of the method,  we assume $\{\T_h^\Gamma\}_{h >0}$ to be quasi-uniform: $h/\min_{T\in\T_h^\Gamma} h_T\le C$, with a constant
$C$ independent of $h$.  Moreover, for the stability analysis, we shall need the following technical assumption on how $\T_h^\Gamma$ resolves $\Gamma$:
\begin{equation}\label{Ass1}
\forall~T\in\T_h^\Gamma:\quad  T\cap\Gamma~\text{is simply-connected}~~\text{and}~~ |\partial T\cap\Gamma|\le C\,h_T,
\end{equation}
with some $C$ independent of $T$.  The assumption on simply-connectivity of $T\cap\Gamma$ can be further relaxed to assuming that the number of connected components in $T\cap\Gamma$ is uniformly bounded, but we will  not pursue this technical improvement further.

The domain formed by all tetrahedra in $\T_h^\Gamma$ is denoted by $\OGamma \coloneqq \text{int}(\cup_{T \in \T_h^\Gamma} \overline{T})$.
On $\T_h^\Gamma$ we use standard finite element spaces of continuous functions, which are polynomials of degree $k$ on each tetrahedron.
These so-called \emph{bulk finite element spaces} are denoted by $V_h^k$,
\begin{equation} \label{fespace}
V_h^k=\{v\in C(\OGamma)\,:\, v\in P_k(T)~\forall~T\in\T_h^\Gamma\}.
\end{equation}
Our bulk velocity and pressure finite element spaces are Taylor--Hood elements on $\OGamma$:
\begin{equation} \label{TaylorHood}
\bU_h \coloneqq  (V_h^{k+1})^3, \quad Q_h \coloneqq  V_h^k\cap L^2_0(\Gamma), ~~k \geq 1. 
\end{equation}
In the trace finite element method formulated below, \emph{traces of functions from $\bU_h$ and $Q_h$ on $\Gamma$ are used to discretize the surface Stokes system}.
\begin{assumption} \label{ass1}
We assume that integrals over $\Gamma$ can be computed exactly, i.e. we do not consider geometry errors.
\end{assumption}

In practice $\Gamma$ has to be approximated by a (sufficiently accurate) approximation $\Gamma_h \approx \Gamma$ in such a way that integrals over $\Gamma_h$ can be computed accurately and efficiently, cf. Remark~\ref{Rem1}.
\begin{remark} \label{Rem1} \rm  For the $\vect P_{k+1}$--$P_k$ Taylor--Hood pair the optimal rate of convergence for velocity (in the $L^2$ norm) is  $O(h^{k+2})$. A piecewise planar approximation  $\Gamma_h\approx \Gamma$ leads to an  $O(h^2)$ geometric error and a suboptimal discretization error. To overcome this, for the trace FEM a general  \emph{higher order} technique, based on a parametric mapping of the domain $\OGamma$, has been developed \cite{grande2018analysis}. This approach can be directly applied to the Taylor--Hood spaces, cf. \cite{JankuhnStokes2019}. To avoid further technical issues related to the  analysis of the parametric mapping, in this paper we do not  study these isoparametric Taylor--Hood spaces. Instead we use Assumption~\ref{ass1} and analyze the spaces \eqref{TaylorHood}. Numerical results from \cite{JankuhnStokes2019} suggest  that   the stability properties of the trace spaces corresponding to the  pair \eqref{TaylorHood}, which is the focus of this paper, and of the
parametric variant of
this pair are essentially the same. We expect that the analysis of the current paper can be also extended to the isoparametric setting. {This topic will be treated in a forthcoming paper.}
\end{remark}
\\[1ex]
There are two important issues specifically related to the fact that we consider a \emph{surface} Stokes system. Firstly, the numerical treatment of the tangentiality condition $\bu \cdot\bn=0$ on $\Gamma$.  Enforcing  the  condition $\bu_h\cdot\bn=0$ on $\Gamma$ for polynomial functions  $\bu_h \in \bU_h$ is inconvenient and may lead to locking (only $\bu_h = \vect 0$ satisfies it). Following \cite{HANSBO2017298,hansbo2016analysis,Jankuhn1,reuther2018solving,olshanskii2018finite}  we add a penalty term to the weak formulation to enforce the tangential constraint weakly. The second issue is related to possible small cuts of tetrahedra from $\T_h^\Gamma$ by the surface. For the standard choice of finite element basis functions this may lead to poorly conditioned algebraic systems. The algebraic stability is recovered by adding certain volumetric terms to the finite element formulation.

Hence, the bilinear forms that we use in the discretization method contain terms related to algebraic stability and a penalty term. We introduce the  bilinear forms:
\begin{align}\begin{split}\label{forms}
	A_h(\bu, \bv) &\coloneqq
		\int_{\Gamma} (2E_s(\bu) : E_s(\bv) + \bu\cdot\bv + \tau\,u_N\,v_N)  \diff{s} \\ & ~~~+
		\rho_u \int_{\Omega_h^{\Gamma}} ([\nabla\bu]\,\bn)\cdot([\nabla\bv]\,\bn) \diff{\vect x},\\
s_h(p,q)& \coloneqq  \rho_p  \int_{\OGamma} (\bn \cdot \nabla p)  (\bn \cdot\nabla q) \,  \diff{\vect x},
\end{split}\end{align}
with the penalty parameter $\tau \geq 0$ and two stabilization parameters $\rho_p  \geq 0$ and $\rho_u \geq 0$. In  practice the (exact) normal $\bn$ used in the bilinear forms $A_h(\cdot,\cdot)$ and $s_h(\cdot,\cdot)$ is replaced by a sufficiently accurate approximation. 

The trace finite element method reads:
Find $(\bu_h, p_h) \in \bU_h \times Q_h$ such that
\begin{equation} \label{discrete}
 \begin{aligned}
  A_h(\bu_h,\bv_h) + b_T(\bv_h,p_h) & =(\blf,\bv_h) &\quad &\text{for all } \bv_h \in \bU_h, \\
  b_T(\bu_h,q_h)-s_h(p_h,q_h) & = (-g,q_h) &\quad &\text{for all }q_h \in Q_h.
 \end{aligned}
\end{equation}
We allow the following ranges of  parameters:
\begin{equation} \label{parameters}
\tau= c_\tau h^{-2},\quad \rho_p= c_p h, \quad \rho_u\in [c_u h,C_u h^{-1}].
\end{equation}
Here $h$ is the characteristic mesh size of the background tetrahedral mesh, while
$c_\tau$, $c_p$, $c_u$, $C_u$ are strictly positive constants  independent of $h$ and of how $\Gamma$ cuts through the background mesh. 
The volumetric term in the definition of $A_h$ is the so called \emph{volume normal derivative} stabilization,
first introduced  in \cite{burman2016cutb,grande2016higher} in the context of  trace FEM
for the scalar Laplace--Beltrami problem on a surface. The term vanishes for the strong solution $\bu$ of
 \eqref{strongform-1}--\eqref{strongform-2}, since one can always assume an extension of $\bu$ off the surface that is constant in normal direction, hence $\lbrack\nabla\bu\rbrack\,\bn = \vect 0$ on $\Omega_h^{\Gamma}$.
As  mentioned above, the  purpose of adding the integrals over the strip $\OGamma$ is to improve the condition number of the resulting algebraic systems.
  Consistency analysis yields the condition $\rho_u \leq C_u h^{-1}$.
While adding the volumetric stabilization for velocity is not essential for stability of the finite element method (only for algebraic conditioning), we shall see   that in the context of mixed trace FEM, the pressure volumetric term with $\rho_p \geq c_p h$ is crucial also
for good stability properties of the finite element discretization method.  In view of (optimal) consistency we take  $\rho_p= c_p h$.  Stability and error analysis suggest  $\tau= c_\tau h^{-2}$, the choice used throughout the paper.

\begin{remark}[Consistency] \label{remconsistent} \rm The discrete problem  \eqref{discrete} is \emph{not} consistent:  \eqref{discrete} is not satisfied with $(\bu_h, p_h)$ replaced by the true solution $(\bu^*, p^*)$ extended with constant values along the normal. Indeed, the velocity finite element space is not a subspace of $\bV_T$ and for the surface rate-of-strain tensor of  $\bv \in \bU_h$, we have
\[
E_s(\bv)=E_s(\bv_T)+ v_N \vect H,
\]
where the term containing the Weingarten map $\vect H \coloneqq \nabla \bn$ causes an inconsistency.  This inconsistency is removed  if instead of $A_h(\cdot,\cdot)$ one uses the bilinear form
\begin{equation} \label{Aconsistent} \begin{split}
 \tilde A_h(\bu, \bv) & \coloneqq
		\int_{\Gamma} \big( 2(E_s(\bu)-u_N \vect H) : (E_s(\bv)- v_N \vect H) + \bu\cdot\bv + \tau\,u_N\,v_N \big) \diff{s} \\ & ~~~ +
		\rho_u \int_{\Omega_h^{\Gamma}} ([\nabla\bu]\,\bn)\cdot([\nabla\bv]\,\bn) \diff{\vect x}.
		\end{split}
\end{equation}
 The consistency properties of the bilinear forms $A_h(\cdot,\cdot)$ in \eqref{forms} and $\tilde A_h(\cdot,\cdot)$ in \eqref{Aconsistent} are analyzed in~\cite{JankuhnVekorLaplace}.  In the analysis below we use $A_h(\cdot,\cdot)$, but all results also apply to $\tilde A_h(\cdot, \cdot)$, due to the equivalence  (for $h$ sufficiently small, implying $\tau$ sufficiently large):
 \begin{equation}\label{equivA}
\frac12 A_h(\bv,\bv)\leq  \tilde A_h(\bv,\bv) \leq 2 A_h(\bv,\bv)\quad \text{for all}~ \bv \in \bU_h.
 \end{equation}
\end{remark}

\section{Stability analysis of trace Taylor--Hood elements}\label{sec:analysis}

It is natural to study  the stability of the finite element method \eqref{discrete} using the following  problem-dependent norms in $\bU_h$ and $Q_h$:
\begin{equation}\label{norms}
\|\bv\|_A=A_h(\bv, \bv)^{\frac12},\quad\|q\|_h=\left(\|q\|^2_{L^2(\Gamma)}+s_h(q,q)\right)^\frac12.
\end{equation}
Functionals in \eqref{norms} indeed define norms on $\bU_h$ and $Q_h$ thanks to the  included volumetric terms (i.e. they define the norms not only on the trace spaces, but also on the spaces of bulk FE functions on $\OGamma$). In particular, it holds (cf. Lemma~7.4 from \cite{grande2018analysis}):
\begin{equation}\label{L2control}
h^{-\frac12}\|\bv\|_{L^2(\OGamma)}\le C\|\bv\|_A\quad\text{and}\quad h^{-\frac12}\|q\|_{L^2(\OGamma)}\le C\|q\|_{h}\quad\forall~\bv\in\bU_h,\,q\in Q_h,
\end{equation}
with a constant $C$ independent of $h$ and the position of $\Gamma$ in the mesh.

We immediately see that the forms $b_T(\cdot,\cdot)$ and $s_h(\cdot,\cdot)$ are continuous and
the form $A_h(\cdot, \cdot)$ is both coercive and continuous with corresponding constants  independent of $h$ and the position of $\Gamma$ in the mesh. Then, it is a textbook result (see, e.g., \cite{ern2013theory} or \cite[section~5]{guzman2016inf} for the case of $s_h\neq0$) that the finite element formulation  \eqref{discrete} is well-posed  in the product norm $(\|\cdot\|_A^2+\|\cdot\|_h^2)^{\frac12}$, \emph{provided the following  holds}: There exists $c_0>0$ independent of $h$ and the position of $\Gamma$ in the mesh
such that
\begin{equation}\label{LBB}
  c_0\|q\|_h\le \sup_{\bv\in\bU_h}\frac{b_T(\bv,q)}{\|\bv\|_A} +s_h(q,q)^{\frac12}\quad\forall~q\in Q_h.
\end{equation}
We call this the \emph{inf-sup stability condition}.
Proving that this inf-sup stability condition is satisfied for trace  Taylor--Hood elements is the main topic of the paper and the subject of this section.

\begin{remark}[Cond. \eqref{LBB} is FE counterpart of \eqref{infsup}]\rm
Let us take a closer look at condition \eqref{LBB}, which we need for the well-posedness of the trace FEM \eqref{discrete}. For the norm on the left-hand side the inequality $\|q\|_h\ge \|q\|_{L^2(\Gamma)}$ trivially holds. Thanks to the Korn inequality~\eqref{korn1} and \eqref{equivA}, for the norm in the denominator we have the estimate~$c \|\bv\|_A \ge \|\bv_T\|_1$ for all $\bv \in \bU_h$. Therefore, \eqref{LBB} yields
\begin{equation*}
  \hat c_0\|q\|_{L^2(\Gamma)}\le \sup_{\bv\in\bU_h}\frac{b_T(\bv,q)}{\|\bv_T\|_1} +s_h(q,q)^{\frac12}\quad\forall~q\in Q_h,
\end{equation*}
with $\hat c_0>0$. The latter bound resembles  \eqref{infsup} for finite element spaces up to the term $s_h(q,q)^{\frac12}$, which depends on the \emph{normal} derivative of $q$ over the tetrahedra cut by $\Gamma$.
\end{remark}

\begin{remark}[$s_h(\cdot,\cdot)$ vs. common ``pressure-stabilization'']\rm \label{rem:stab} In the finite elements analysis of  the standard planar Stokes problem, it is common to add pressure stabilization in mixed finite element methods that do not satisfy the LBB condition, such as equal-order elements; see, e.g., \cite{john2016finite}. Such a stabilization also results in an additional bilinear $(p_h,q_h)$-form in the finite element formulation.
There is, however, an essential difference between such standard stabilizations of equal-order (or other LBB-unstable) finite element pairs and the volumetric normal pressure stabilization added in \eqref{discrete}.
For manifolds, such a standard pressure stabilization would mean the penalization of the \emph{tangential} variation of $p_h$, while   $s_h(p_h,q_h)$  defined in \eqref{forms} imposes a constraint on the \emph{normal} behaviour of $p_h$. For example, for the surface case the classical  Brezzi--Pitk\"{a}ranta stabilization~\cite{BP} is given by  $s_h^{\rm tang}(p,q)=\rho_p  \int_{\Gamma} \nablaG p \cdot \nablaG q \diff{s}$, with $\rho_p=O(h^2)$, or in the
form of a volumetric integral by $s_h^{\rm tang}(p,q)=\rho_p  \int_{\Omega_h^{\Gamma}} \nablaG p \cdot \nablaG q \diff{\vect x}$, with $\rho_p$ as in \eqref{parameters}. Combined with the normal volume stabilization $s_h(\cdot,\cdot)$, cf.~\eqref{forms}, one obtains a \emph{full pressure gradient} stabilization of the form
\begin{equation}\label{FullStab}
s_h^{\rm full}(p,q)=\rho_p  \int_{\Omega_h^{\Gamma}} \nabla p \cdot \nabla q \diff{\vect x}.
\end{equation}
This full pressure gradient stabilization has been used and analyzed in \cite{olshanskii2018finite} with $\vect P_1$--$P_1$ trace finite elements for the surface Stokes problem. Of course,  $s_h^{\rm full}(p,q)$ would also make the $\vect P_2$--$P_1$ trace FEM stable; however, due to a larger consistency error such a method does not have an optimal order discretization error. Numerical experiments (see section~\ref{sec:num}) show that our stability analysis presented below is sharp in the following sense: From the computed optimal constants $c_0$ in \eqref{LBB} we conclude that (i) for $\vect P_2$--$P_1$ trace FEM the discretization~\eqref{discrete} is  unstable for $s_h(p,q)=0$, but becomes stable with only the normal volume stabilization $s_h(p,q)$ as in \eqref{forms}; while (ii)  for  $\vect P_1$--$P_1$ trace FEM the discretization~\eqref{discrete} is  unstable for both $s_h(p,q)=0$ and  $s_h(p,q)$ as in \eqref{forms}, and  the full-gradient stabilization $s_h^{\rm full}(p,q)$ makes it stable.
\end{remark}
\medskip


We outline the structure of our analysis for proving the inf-sup stability condition $\eqref{LBB}$. In section~\ref{Sectequi} we present equivalent formulations of the inf-sup stability condition. One of these formulations  essentially follows from the so-called ``Verf\"urth's trick'', which is well-known in the  stability analysis of mixed finite element pairs \cite{verfurth1984error}. Based on this, another equivalent formulation is derived that uses the notion of \emph{regular elements}, which is known in the literature on trace FEM \cite{Alg1,DemlowOlsh}. The derivation of the latter equivalent formulation is based on a key new result (``neighborhood estimate'')  which essentially states that for finite element functions the $L^2$ norm on \emph{any} element $T \in \cT_h^\Gamma$ can be controlled by the $L^2$ norm on a neighboring \emph{regular} element and the $L^2$ norm of the normal derivative (i.e., normal to the surface) in a small neighborhood. This result may be useful also in other analyses of
trace finite element methods. A proof of this neighborhood estimate is given in a separate section~\ref{sectproof}. The results concerning the equivalent formulations of the inf-sup stability condition and the neighborhood estimate are valid for surface Taylor--Hood pairs  for all $k \geq 1$. The formulation of the inf-sup stability condition in terms of regular elements is tailor-made for our setting and in section~\ref{sectmainresult} we show that it is satisfied for $k=1$, i.e., for the $\vect P_2$--$P_1$ surface Taylor--Hood pair.

In the remainder of the paper we write $x\lesssim y$ to state that the inequality  $x\le c y$
holds for quantities $x,y$ with a constant $c$, which is independent of the mesh parameter
$h$ and the position of $\Gamma$ in the background mesh.
Similarly for $x\gtrsim y$, and $x\simeq y$ means that both $x\lesssim y$ and $x\gtrsim y$ hold.

\subsection{Equivalent formulations of the inf-sup stability condition}\label{Sectequi}
 The following lemma is an application of Verf\"urth's trick \cite{verfurth1984error} in the setting of trace finite element methods. We make use of  the  following local trace inequality, cf.~\cite{Hansbo02,reusken2015analysis,guzman2016inf}:
\begin{equation} \label{fund1B}
 h_T \|v\|_{L^2(\Gamma_T)}^2 \lesssim  \|v\|_{L^2(T)}^2+h_T^2\|v\|_{H^1(T)}^2 \quad \text{for all}~v \in H^1(T),~T\in\T_h^\Gamma,
\end{equation}
with $\Gamma_T \coloneqq \Gamma \cap T$.
We further need the following norm on $Q_h$:
\begin{equation}\label{norm1}
  \|q\|_{1,\,h}\coloneqq \Big(\sum_{T \in \T_h^\Gamma} h_T \|\nabla q\|_{L^2(T)}^2\Big)^{\frac12}.
\end{equation}

\begin{lemma}\label{lem:equiv}
The inf-sup stability condition \eqref{LBB} is equivalent to
\begin{align}
  \|q\|_{1,\,h} & \lesssim \sup_{\bv\in\bU_h}\frac{b_T(\bv,q)}{\|\bv\|_A} +s_h(q,q)^{\frac12}\quad\forall~q\in Q_h.\label{LBB2}
\end{align}
\end{lemma}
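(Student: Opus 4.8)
The statement is an equivalence between the inf-sup condition \eqref{LBB}, which controls $\|q\|_h = (\|q\|_{L^2(\Gamma)}^2 + s_h(q,q))^{1/2}$, and the condition \eqref{LBB2}, which controls the mesh-weighted full-gradient norm $\|q\|_{1,h}$. I would prove the two implications separately. For the direction \eqref{LBB2}$\Rightarrow$\eqref{LBB}, the plan is to bound $\|q\|_h$ by $\|q\|_{1,h} + s_h(q,q)^{1/2}$ (up to a constant). Indeed $\|q\|_h^2 = \|q\|_{L^2(\Gamma)}^2 + s_h(q,q)$, and the first term is estimated elementwise by the local trace inequality \eqref{fund1B}: $h_T\|q\|_{L^2(\Gamma_T)}^2 \lesssim \|q\|_{L^2(T)}^2 + h_T^2\|q\|_{H^1(T)}^2$. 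Summing over $T\in\T_h^\Gamma$, dividing by $h_T\simeq h$, and using an inverse inequality $\|q\|_{L^2(T)}\lesssim h_T\|\nabla q\|_{L^2(T)}$ valid up to the (quotient) constant — more carefully, using that $q$ lives in $Q_h\subset L^2_0(\Gamma)$ so a Poincaré-type bound is available, or simply absorbing the zeroth-order term — one gets $\|q\|_{L^2(\Gamma)}^2 \lesssim \|q\|_{1,h}^2$. Hence $\|q\|_h \lesssim \|q\|_{1,h} + s_h(q,q)^{1/2} \lesssim$ (right-hand side of \eqref{LBB2}) $+ s_h(q,q)^{1/2}$, which absorbs into the right-hand side of \eqref{LBB}; care must be taken that the $s_h$ term is not double-counted, but since it appears additively on both sides this is harmless.

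For the converse direction \eqref{LBB}$\Rightarrow$\eqref{LBB2}, the idea is the classical Verf\"urth argument. Given $q\in Q_h$, one constructs an explicit test function $\bv\in\bU_h$ for which $b_T(\bv,q)$ is large. Since $q\in V_h^k$ has a well-defined surface gradient, and since $b_T(\bv,q) = \int_\Gamma \bv\cdot\nabla_\Gamma q\,\diff s$ by the integration-by-parts identity \eqref{Bform} (valid here as $q$ is piecewise polynomial, hence in $H^1$), the natural choice is $\bv$ equal to a smoothed/interpolated version of $\bP\nabla q$ — or rather of $\nabla q$ itself, since $\bU_h$ consists of full $\R^3$-valued fields. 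A clean route: take $\bv_h = I_h(\nabla q)$, a suitable Scott–Zhang-type interpolant of the (discontinuous, one-degree-lower) vector field $\nabla q$ into $\bU_h = (V_h^{k+1})^3$, scaled so that $\|\bv_h\|_A \lesssim \|q\|_{1,h}$. Then estimate $b_T(\bv_h,q) = \int_\Gamma \bv_h\cdot\nabla_\Gamma q = \int_\Gamma \nabla q\cdot\nabla_\Gamma q + \int_\Gamma(\bv_h - \nabla q)\cdot\nabla_\Gamma q$. The first integral equals $\int_\Gamma |\nabla_\Gamma q|^2 = \|\nabla_\Gamma q\|_{L^2(\Gamma)}^2$ since $\nabla_\Gamma q = \bP\nabla q$ and $|\bP\nabla q|^2 = \nabla q\cdot\bP\nabla q$. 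The decomposition $|\nabla q|^2 = |\nabla_\Gamma q|^2 + (\bn\cdot\nabla q)^2$ on $\Gamma$ then relates $\|\nabla_\Gamma q\|_{L^2(\Gamma)}^2$ to $\|\nabla q\|_{L^2(\Gamma)}^2 - s_h(q,q)/\rho_p$-type quantities; combined with the trace inequality \eqref{fund1B} and inverse estimates this yields $\|\nabla_\Gamma q\|_{L^2(\Gamma)}^2 \gtrsim \|q\|_{1,h}^2 - C\,s_h(q,q)$. The interpolation error term $\int_\Gamma(\bv_h - \nabla q)\cdot\nabla_\Gamma q$ is bounded, again via \eqref{fund1B} and standard approximation estimates for $I_h$, by $\varepsilon\|q\|_{1,h}^2 + C_\varepsilon\,s_h(q,q)$ — here the normal-derivative stabilization is exactly what absorbs the jump of $\nabla q$ across element faces, so this is the place where $s_h$ re-enters. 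Putting these together gives $b_T(\bv_h,q)/\|\bv_h\|_A \gtrsim \|q\|_{1,h} - C\,s_h(q,q)^{1/2}/\|q\|_{1,h}\cdots$; after the standard rearrangement (if $\|q\|_{1,h}$ is small relative to $s_h(q,q)^{1/2}$ the inequality \eqref{LBB2} is trivial, otherwise the main term dominates), \eqref{LBB2} follows from \eqref{LBB}.

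The main obstacle I expect is the careful bookkeeping in the converse direction: constructing the discrete test function $\bv_h$ from the element-wise-defined $\nabla q$ while (a) keeping $\|\bv_h\|_A \lesssim \|q\|_{1,h}$ with a constant independent of how $\Gamma$ cuts the mesh — which requires the control $h^{-1/2}\|\bv_h\|_{L^2(\OGamma)} \lesssim \|\bv_h\|_A$ from \eqref{L2control} together with the normal-derivative stabilization term in $A_h$ being of the right order for the interpolant — and (b) controlling the consistency gap between $b_T(\bv_h,q)$ evaluated via the surface form and the bulk quantities $\|\nabla q\|_{L^2(T)}$ appearing in $\|q\|_{1,h}$, for which the local trace inequality \eqref{fund1B} and the simply-connectedness/boundary-length assumption \eqref{Ass1} on the cut elements are the essential tools. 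The role of $s_h(q,q)$ as the term that simultaneously appears on both sides and soaks up the normal-direction contributions (which are invisible to $b_T$, since $b_T(\bv,q)=\int_\Gamma\bv\cdot\nabla_\Gamma q$ only sees the tangential gradient) is the conceptual heart of why the equivalence holds, and getting the signs and the $\varepsilon$-absorption right is where the real work lies.
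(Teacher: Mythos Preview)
Your plan has the two directions essentially swapped, and the direction you treat as ``easy'' in fact relies on a false inequality.

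For \eqref{LBB}$\Rightarrow$\eqref{LBB2} the paper's argument is a two-line norm comparison: the finite element inverse inequality plus \eqref{L2control} give
\[
\|q\|_{1,h}^2=\sum_{T\in\cT_h^\Gamma}h_T\|\nabla q\|_{L^2(T)}^2 \lesssim h^{-1}\|q\|_{L^2(\OGamma)}^2 \lesssim \|q\|_h^2,
\]
so \eqref{LBB} immediately yields \eqref{LBB2}. Your elaborate Verf\"urth-type construction with $\bv_h=I_h(\nabla q)$ is unnecessary here; moreover, if that construction actually produced the claimed lower bound $b_T(\bv_h,q)\gtrsim\|q\|_{1,h}^2-Cs_h(q,q)$ with $\|\bv_h\|_A\lesssim\|q\|_{1,h}$, it would prove \eqref{LBB2} outright without ever invoking \eqref{LBB}---this is essentially the content of the main Theorem~\ref{lemma2}, not a step in the present lemma.

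For \eqref{LBB2}$\Rightarrow$\eqref{LBB} your plan is to establish the norm bound $\|q\|_{L^2(\Gamma)}\lesssim\|q\|_{1,h}+s_h(q,q)^{1/2}$. This inequality is \emph{false} uniformly in $h$. Since $\|q\|_{1,h}^2+s_h(q,q)\simeq h\|\nabla q\|_{L^2(\OGamma)}^2$, your claim would read $\|q\|_{L^2(\Gamma)}^2\lesssim h\|\nabla q\|_{L^2(\OGamma)}^2$; but for $q\in Q_h$ interpolating a fixed smooth mean-zero function on $\Gamma$ one has $\|q\|_{L^2(\Gamma)}\simeq 1$ while $h\|\nabla q\|_{L^2(\OGamma)}^2\simeq h\cdot|\OGamma|\simeq h^2\to 0$. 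The surface Poincar\'e inequality you allude to only gives $\|q\|_{L^2(\Gamma)}\lesssim\|\nabla_\Gamma q\|_{L^2(\Gamma)}\lesssim h^{-1}\|q\|_{1,h}$, which is the wrong scaling. The missing idea is precisely Verf\"urth's trick in its classical form: use the \emph{continuous} inf-sup property \eqref{infsup} to produce $\bv\in\bV_T$ with $b_T(\bv,q)=\|q\|_{L^2(\Gamma)}^2$ and $\|\bv\|_1\lesssim\|q\|_{L^2(\Gamma)}$, then take $\bv_h=I_h(\bv^e)\in\bU_h$ (Cl\'ement interpolant of the normal extension), bound $\|\bv_h\|_A\lesssim\|q\|_{L^2(\Gamma)}$, and estimate the perturbation $b_T(\bv-\bv_h,q)$ by $c\|q\|_{L^2(\Gamma)}\|q\|_{1,h}$. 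This yields $\|q\|_{L^2(\Gamma)}-c\|q\|_{1,h}\lesssim\sup_{\bv_h}b_T(\bv_h,q)/\|\bv_h\|_A$, and combining with \eqref{LBB2} gives \eqref{LBB}.
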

\begin{proof}
From a finite element inverse inequality and~\eqref{L2control} we get
 \[ \Big(\sum_{T \in \T_h^\Gamma} h_T \|\nabla q\|_{L^2(T)}^2\Big)^{\frac12} \lesssim h^{-\frac12} \|q\|_{L^2(\OGamma)} \lesssim \|q\|_{h} \quad \text{for all}~q \in Q_h.
\]
Hence, \eqref{LBB} implies \eqref{LBB2}.

We now derive \eqref{LBB2} $\Rightarrow$ \eqref{LBB}.
Take $ q \in Q_h$. Thanks to the inf-sup property \eqref{infsup}, there exists
$\bv\in\bV_T$ such that
\begin{equation} \label{aux10}
  b_T( \bv,  q)= \| q\|_{L^2(\Gamma)}^2,\qquad \|\bv\|_1\lesssim \| q\|_{L^2(\Gamma)}.
\end{equation}
We consider  $\bv^e\in H^1(\mathcal{O}(\Gamma))$, a normal  extension of $\bv$ off the surface to a neighborhood $\mathcal{O}(\Gamma)$ of width $O(h)$ such that  $\OGamma\subset\mathcal{O}(\Gamma)$.
For this normal extension one has (see, e.g., \cite{OlshReusken08})
\begin{equation}\label{normal}
\|\bv^e\|_{H^1(\OGamma)}\simeq h^{\frac12}\|\bv\|_{H^1(\Gamma)}.
 \end{equation}
Take $\bv_h\coloneqq  I_h ( \bv^e) \in \bU_h$, where $I_h : H^1(\mathcal{O}(\Gamma))^3\to \bU_h$ is the Cl\'{e}ment interpolation operator.
By standard arguments (see, e.g., \cite{reusken2015analysis}) based  on stability and approximation properties of $I_h ( \bv^e)$, one gets
\begin{equation*}
\begin{split}
 \|\bv_h\|_A^2&=\|I_h ( \bv^e)\|_A^2\\
~ & \lesssim \|I_h ( \bv^e)\|_1^2+h^{-2}\|I_h ( \bv^e)_N\|_{L^2(\Gamma)}^2+ h^{-1} \|\nabla (I_h ( \bv^e))\bn\|_{L^2(\OGamma)}^2\\
{\footnotesize \bv\cdot\bn=0, \eqref{fund1B}}~  &\lesssim \sum_{T\in \cT_h^\Gamma} h_T^{-1}\|I_h ( \bv^e)\|_{H^1(T)}^2+h^{-2}\|\big(I_h ( \bv^e)-\bv\big)\cdot \bn\|_{L^2(\Gamma)}^2\\
{\footnotesize \eqref{fund1B}}~  &\lesssim \sum_{T\in \cT_h^\Gamma} h_T^{-1}\|\bv^e\|_{H^1(\omega(T))}^2+   h^{-2}  \sum_{T\in \cT_h^\Gamma} h_T^{-1}\|I_h ( \bv^e)-\bv^e\|_{L^2(T)}^2\\ &\qquad+
 h^{-2} \sum_{T\in \cT_h^\Gamma} h_T \|I_h ( \bv^e)-\bv^e\|_{H^1(T)}^2\\
 &\lesssim    \sum_{T\in \cT_h^\Gamma} h_T^{-1}\|\bv^e\|_{H^1(\omega(T))}^2\lesssim    h^{-1}\|\bv^e\|_{H^1(\OGamma)}^2\\
{\footnotesize \eqref{normal}}~  & \lesssim \|\bv\|_1^2.
 \end{split}
\end{equation*}
Hence due to \eqref{aux10} we obtain
\begin{equation} \label{aux1U}
 \|\bv_h\|_A\lesssim \|q\|_{L^2(\Gamma)}.
\end{equation}
Using \eqref{fund1B} and approximation properties of $I_h ( \bv^e)$  one gets
\begin{equation}\label{aux1d}
 \|\bv - I_h ( \bv^e)\|_{L^2(\Gamma)}\lesssim h\|\bv\|_{H^1(\Gamma)}.
\end{equation}
Using \eqref{aux1d} and \eqref{aux10}, \eqref{fund1B} we obtain
\[
\begin{split}
b_T(\bv_h,  q)  & = b_T( \bv,  q)- b_T(\bv - I_h ( \bv^e), q) \\
&\ge  \| q\|_{L^2(\Gamma)}^2 - \|\bv - I_h ( \bv^e)\|_{L^2(\Gamma)} \|\gradG q\|_{L^2(\Gamma)}\\
{\footnotesize \eqref{aux1d}}~
&\ge  \| q\|_{L^2(\Gamma)}^2 - c h \|\bv\|_{H^1(\Gamma)} \|\gradG q\|_{L^2(\Gamma)} \\ & = \| q\|_{L^2(\Gamma)}^2 - c h \|\bv\|_{H^1(\Gamma)} \left(\sum_{T\in \cT_h^\Gamma}\|\gradG q \|_{L^2(\Gamma_T)}^2\right)^{\frac12}\\
{\footnotesize \eqref{fund1B}}~
& \ge  \| q\|_{L^2(\Gamma)}^2 - c \,h^{\frac12} \|\bv\|_{H^1(\Gamma)} \left(\sum_{T\in \cT_h^\Gamma}\|\nabla q \|_{L^2(T)}^2\right)^{\frac12} \\
{\footnotesize \eqref{aux10}}~
 & \ge  \| q\|_{L^2(\Gamma)}^2 - c \|q\|_{L^2(\Gamma)} \|q\|_{1,\,h}. 
\end{split}
\]
This and \eqref{aux1U} yield
\begin{equation}\label{aux552}
 \| q\|_{L^2(\Gamma)} - c  \|q\|_{1,\,h} \lesssim \sup_{\bv\in\bU_h}\frac{b_T(\bv,q)}{\|\bv\|_A}.
\end{equation}
From \eqref{LBB2} and \eqref{aux552} we have
\begin{align*}
  \|q\|_{L^2(\Gamma)} & \lesssim \sup_{\bv\in\bU_h}\frac{b_T(\bv,q)}{\|\bv\|_A} +s_h(q,q)^{\frac12}\quad\forall~q\in Q_h, 
  \end{align*}
which implies \eqref{LBB}.
\end{proof}

We now derive a further condition that is equivalent to \eqref{LBB2}, in which the norm on the left-hand side in \eqref{LBB2} is replaced by a weaker one in which $\sum_{T \in \cT_h^\Gamma}$ is replaced by $\sum_{T \in \cT^{\Gamma}_{\textnormal{reg}}}$ with
$\cT^{\Gamma}_{\textnormal{reg}} \subset  \cT_h^\Gamma$ a subset of ``regular elements.''
The following notion of regular elements appeared earlier in the literature on trace FEM~\cite{DemlowOlsh,Alg1}.
We define the set of \emph{regular elements} as  those $T \in \T_h^\Gamma$ for which the area of the intersection $\Gamma_T = \Gamma \cap T$ is not less than
$\hat c_\cT h_T^2$ with some sufficiently small   threshold  parameter $ \hat c_\cT > 0$, whose  value  will be specified later:
\begin{equation} \label{defregGglobal}
  \cT^{\Gamma}_{\textnormal{reg}} \coloneqq \{\, T \in \cT_h^\Gamma\::\:|\Gamma_T| \geq \hat c_\cT h_T^2\,\}.
\end{equation}

The set $\T^\Gamma_{\textnormal{reg}}$ is ``dense'' in $\T_h^\Gamma$ the following sense: Every $T\in\T_h^\Gamma$ has a regular element in the set of its neighboring tetrahedra $\omega(T)\coloneqq
\{\,T' \in \cT_h^\Gamma\::\:\overline{T'} \cap \overline{T} \neq \emptyset\,\}$, cf. section~\ref{sectproof}.  Using this property the result in the following key lemma can be proved, which shows that for any $T\in\T_h^\Gamma$ and $q \in Q_h$ the norm $\|q\|_{L^2(T)}$  can be essentially controlled by $\|q\|_{L^2(T')}$ for a  neighbouring regular element $T'$ and normal  derivatives in a small volume neighborhood.
In addition to the norm defined in \eqref{norm1}, it is convenient to introduce the following \emph{semi}norm on $Q_h$:
\[
\|q\|_{1,\,\textnormal{reg}}\coloneqq \Big(\sum_{T \in \T_{\textnormal{reg}}^\Gamma} h_T \|\nabla q\|_{L^2(T)}^2\Big)^{\frac12}.
\]
\begin{lemma}[Neighborhood estimate] \label{Lemma_main}
 There exists a constant $c$ (depending only on shape regularity properties of $\cT_h$ and the (local) smoothness of $\Gamma$) such that for each $T \in \cT_h^\Gamma$ there exists $T' \in  \omega(T)\cap \cT^{\Gamma}_{\textnormal{reg}}$ and
 \begin{equation} \label{mainest}
  \|q\|_{L^2(T)} \leq c \Big( \|q\|_{L^2(T')} + h_T\|\vect n \cdot \nabla q\|_{L^2(\omega(T))} + h_T^2 \|\nabla q\|_{L^2(\omega(T))}\Big) \quad \forall~q \in Q_h.
 \end{equation}
\end{lemma}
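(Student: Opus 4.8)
The plan is to prove the estimate by a chaining argument along a path of neighboring elements, reducing everything to two local building blocks: a one-step transfer estimate between adjacent tetrahedra, and a bound controlling how far one must travel to reach a regular element. First I would establish the "density" claim stated just before the lemma, namely that every $T\in\cT_h^\Gamma$ has at least one regular neighbor $T'\in\omega(T)\cap\cT^\Gamma_{\textnormal{reg}}$; this follows from the technical assumption \eqref{Ass1} on how $\cT_h^\Gamma$ resolves $\Gamma$ (simple-connectivity of $T\cap\Gamma$ and $|\partial T\cap\Gamma|\lesssim h_T$), together with shape regularity, since if $|\Gamma_T|$ is tiny the portion of $\Gamma$ entering $T$ must exit into a neighbor where it occupies area $\gtrsim h_T^2$ — one picks $\hat c_\cT$ small enough to make this quantitative. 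Fix such a $T'$ for the given $T$.

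The core is a \emph{one-step estimate}: for two tetrahedra $T_1,T_2\in\cT_h^\Gamma$ sharing a face $F$,
\begin{equation*}
\|q\|_{L^2(T_1)} \lesssim \|q\|_{L^2(T_2)} + h\,\|q\|_{H^1(T_1\cup T_2)}\qquad\forall\,q\in Q_h.
\end{equation*}
This is proved with a standard scaling / finite-dimensionality argument: on the reference patch the map $q\mapsto (\|q\|_{L^2(\hat F)}, \|\nabla q\|_{L^2(\hat T_1\cup\hat T_2)})$ vanishes only on constants, which are controlled on $T_2$ by $\|q\|_{L^2(T_2)}$; a trace inequality on $F$ then closes the loop, and Piola/affine scaling back to the physical patch inserts the powers of $h$. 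Iterating this along a chain $T=T_0,T_1,\dots,T_m=T'$ of length $m$ bounded uniformly (because $T'\in\omega(T)$, so $m\le$ an absolute constant depending only on shape regularity), and summing the $H^1$ contributions, yields
\begin{equation*}
\|q\|_{L^2(T)} \lesssim \|q\|_{L^2(T')} + h\,\|\nabla q\|_{L^2(\omega(T))}.
\end{equation*}

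It remains to replace the full gradient $h\,\|\nabla q\|$ by the weighted combination $h\,\|\bn\cdot\nabla q\| + h^2\,\|\nabla q\|$ appearing in \eqref{mainest}. Here one exploits that $\omega(T)$ meets $\Gamma$: decomposing $\nabla q = (\bn\cdot\nabla q)\bn + \bP\nabla q$, the normal part is already of the right form, while for the tangential part $\bP\nabla q$ one uses that on the piece of surface $\Gamma\cap\omega(T)$ (of area $\gtrsim h^2$ since $\omega(T)$ contains the regular element $T'$) the trace $q|_\Gamma$ lies in a fixed low-dimensional polynomial space, so a finite-element inverse inequality together with the local trace inequality \eqref{fund1B} gives $h\,\|\bP\nabla q\|_{L^2(\omega(T))}\lesssim h^2\,\|\nabla q\|_{L^2(\omega(T))} + h\,\|\bn\cdot\nabla q\|_{L^2(\omega(T))}$ — the extra $h$ coming from the gain in the inverse estimate on a set of surface measure $\gtrsim h^2$ versus volume measure $\gtrsim h^3$; alternatively one argues directly on $Q_h$ restricted to $\omega(T)$ that $\|\bP\nabla q\|_{L^2(\omega(T))}\lesssim h\|\nabla q\|_{L^2(\omega(T))} + \|\bn\cdot\nabla q\|_{L^2(\omega(T))}$ by a scaling argument exploiting that $\bn$ is $O(h)$-close to constant on the patch. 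Substituting this into the chained estimate gives \eqref{mainest}.

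The main obstacle I anticipate is the last step: making rigorous the claim that the tangential gradient is controlled, up to a power of $h$, by the normal gradient plus a higher-order full-gradient term. Naively $\bP\nabla q$ is not small; what saves the argument is that it is small \emph{relative to} what the volume norm of $\nabla q$ can be, once we know $\omega(T)$ contains a regular element, because then the polynomial $q$ is pinned down on a genuinely two-dimensional piece of $\Gamma$ and cannot oscillate tangentially without also having a large volume gradient. Getting the bookkeeping of $h$-powers exactly right here, and making sure the constant depends only on shape regularity and the local smoothness of $\Gamma$ (which enters through how non-constant $\bn$ is on the patch, and through \eqref{Ass1}), is where the care is needed; the chaining and the one-step scaling estimate are routine by comparison.
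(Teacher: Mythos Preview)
The proposal has a genuine gap in the final step. After chaining you correctly arrive at
\[
\|q\|_{L^2(T)} \lesssim \|q\|_{L^2(T')} + h\,\|\nabla q\|_{L^2(\omega(T))},
\]
which is exactly the known local Sobolev estimate (inequality \eqref{E4} in the paper). The entire content of Lemma~\ref{Lemma_main} is the \emph{improvement} over \eqref{E4}: replacing $h\|\nabla q\|$ by $h\|\bn\cdot\nabla q\| + h^2\|\nabla q\|$. Your proposal for this, namely splitting $\nabla q = (\bn\cdot\nabla q)\bn + \bP\nabla q$ and then claiming
\[
\|\bP\nabla q\|_{L^2(\omega(T))} \lesssim h\,\|\nabla q\|_{L^2(\omega(T))} + \|\bn\cdot\nabla q\|_{L^2(\omega(T))},
\]
cannot hold. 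Take $q$ linear with $\nabla q$ purely tangential (e.g.\ $q=x_1$ on a patch where $\bn\approx\be_3$): then $\bP\nabla q=\nabla q$ and $\bn\cdot\nabla q\approx 0$, so the inequality reads $\|\nabla q\|\lesssim h\|\nabla q\|$, which fails for small $h$. The intuition you offer --- that being ``pinned down on a genuinely two-dimensional piece of $\Gamma$'' suppresses tangential oscillation --- controls the \emph{values} of $q$ on $\Gamma$, not its tangential derivative; a polynomial can have unit tangential gradient while being perfectly well-determined on $\Gamma_{T'}$. Neither inverse inequalities nor the near-constancy of $\bn$ on the patch change this.

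The paper's argument is structurally different and never attempts to bound $\bP\nabla q$. It constructs a \emph{cylinder} $C_S$ with axis along a fixed normal $\bn_{\vect x_0}$ and circular cross-section $D_S$ of radius $\gtrsim h_T$, lying inside the projection of $T$ onto the tangent plane at $\vect x_0$ and chosen (via the area--perimeter Lemma~\ref{Lem:AreaPer}) so that the cylinder meets $\Gamma$ inside the regular neighbor $T'$. A ball $B_T\subset C_S\cap T$ of radius $\gtrsim h_T$ gives $\|q\|_{L^2(T)}\lesssim\|q\|_{L^2(B_T)}\le\|q\|_{L^2(C_S\cap\omega(T))}$ by norm equivalence on polynomials. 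Integrating along the cylinder axis then transports $\|q\|_{L^2(C_S\cap\omega(T))}$ to $h_T^{1/2}\|q\|_{L^2(\Gamma_{T'})}$ plus $h_T\|\bn_{\vect x_0}\cdot\nabla q\|_{L^2(\omega(T))}$; only now does the $h_T^2\|\nabla q\|$ term appear, from the $O(h_T)$ perturbation $\bn_{\vect x_0}\to\bn(\vect y)$. The point is that the transport direction is \emph{built to be normal}, so the tangential gradient never enters; your chaining through face-neighbors discards this directional information at the outset and cannot recover it afterwards.
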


A proof of this result is given in section~\ref{sectproof}.
\begin{remark}[On estimate \eqref{mainest}]\rm To see the improvement offered by \eqref{mainest} over available results, it is instructive to compare \eqref{mainest} to a local Sobolev inequality, which is proved by a different argument (cf. Lemma~3.1 in~\cite{OlshReusken08}):
\begin{equation} \label{E4}
\|v\|_{L^2(T)}\le \|v\|_{L^2(\omega(T))} \leq c \Big( \|v\|_{L^2(T')} + h_T\|\nabla v\|_{L^2(\omega(T))} \Big) \quad \forall~v \in H^1(\omega(T)).
\end{equation}
The latter result holds for $v \in H^1(\omega(T))$, while in \eqref{mainest} we restrict to $q\in Q_h$. In \eqref{E4} the first order (in $h_T$) term contains the (full) gradient, whereas in \eqref{mainest} only the normal derivative is needed.
\end{remark}
\ \\[1ex]
The following corollary is needed in the proof of Lemma~\ref{lemequi2} below.
\begin{corollary}\label{corol}
 For any $T \in \cT_h^\Gamma$ there exists $T' \in \omega(T)\cap\cT^{\Gamma}_{\textnormal{reg}}$ such that
 \begin{equation} \label{mainest2}
  \|\nabla q\|_{L^2(T)} \lesssim \| \nabla q\|_{L^2(T')} + \|\vect n \cdot \nabla q\|_{L^2(\omega(T))} + h_T \|\nabla q\|_{L^2(\omega(T))} \quad \forall~q \in Q_h.
 \end{equation}
 Furthermore, for $h$ sufficiently small we have
 \begin{equation} \label{mainest3}
  \|q\|_{1,\,h}^2 \lesssim \|q\|_{1,\,\textnormal{reg}}^2 +s_h(q,q) \quad \forall~q \in Q_h.
 \end{equation}
\end{corollary}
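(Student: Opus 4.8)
\textbf{Proof proposal for Corollary~\ref{corol}.}

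The plan is to obtain \eqref{mainest2} as an almost immediate consequence of Lemma~\ref{Lemma_main} applied to the components of $\nabla q$, and then to obtain \eqref{mainest3} by summing \eqref{mainest2} over all $T\in\cT_h^\Gamma$ and absorbing the higher-order term on the left. First I would apply the neighborhood estimate to each partial derivative $\partial_i q$, $i=1,2,3$. Since $q\in Q_h=V_h^k\cap L^2_0(\Gamma)$, each $\partial_i q$ is, elementwise, a polynomial of degree $k-1$; and, as explained right before \eqref{defregGglobal}, the neighborhood estimate in the form used in its proof in section~\ref{sectproof} applies to any bulk finite element function, not merely to $Q_h$ itself. (If one wants to invoke Lemma~\ref{Lemma_main} strictly as stated for $Q_h$, one notes that $\partial_i q$ need not have zero surface mean, but the mean value is irrelevant: one applies the estimate to $\partial_i q$ minus its mean over $\Gamma$, or simply observes that the proof of Lemma~\ref{Lemma_main} in section~\ref{sectproof} never uses the zero-mean constraint.) Applying \eqref{mainest} to $\partial_i q$, dividing by $h_T$, using $\nabla(\partial_i q)$ has one fewer power of $h$ available through the inverse inequality $\|\nabla(\partial_i q)\|_{L^2(\omega(T))}\lesssim h_T^{-1}\|\nabla q\|_{L^2(\omega(T))}$, and summing over $i$, yields exactly \eqref{mainest2}: the first term on the right of \eqref{mainest} produces $\|\nabla q\|_{L^2(T')}$, the second produces $\|\vect n\cdot\nabla(\partial_i q)\|_{L^2(\omega(T))}$, and the third, after the inverse estimate, produces $h_T\|\nabla q\|_{L^2(\omega(T))}$. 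One must be slightly careful that the middle term involves $\vect n\cdot\nabla(\partial_i q)$, not $\partial_i(\vect n\cdot\nabla q)$; these differ by terms involving derivatives of $\vect n$, which are bounded (since $\Gamma$ is smooth) and contribute only to the $\|\vect n\cdot\nabla q\|_{L^2(\omega(T))}$ term (modulo a lower-order $\|\nabla q\|_{L^2(\omega(T))}$ contribution that is absorbed into the last term at the cost of a harmless $O(1)$ factor). Collecting the three components and choosing $T'$ to be a common regular neighbor (or taking three possibly different $T'$ and noting $\|\nabla q\|_{L^2(T'_i)}\le\|\nabla q\|_{L^2(T')}$ is false in general — so instead one just bounds each $\|\partial_i q\|_{L^2(T'_i)}$ by $\|\nabla q\|_{L^2(\omega(T))}$ restricted to a regular element, which is what \eqref{mainest2} asserts) gives the first claim.

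For \eqref{mainest3}, I would multiply \eqref{mainest2} by $h_T^{1/2}$, square, and sum over $T\in\cT_h^\Gamma$. The left-hand side becomes $\|q\|_{1,h}^2$ by definition \eqref{norm1}. On the right, three sums appear. The sum over $T$ of $h_T\|\nabla q\|_{L^2(T')}^2$ with $T'\in\omega(T)\cap\cT^\Gamma_{\textnormal{reg}}$ is bounded by $C\|q\|_{1,\textnormal{reg}}^2$, because shape regularity and quasi-uniformity bound the number of tetrahedra $T$ that can share a given regular neighbor $T'$, and $h_T\simeq h_{T'}$ for $T'\in\omega(T)$; this is the first term in \eqref{mainest3}. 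The sum over $T$ of $h_T\|\vect n\cdot\nabla q\|_{L^2(\omega(T))}^2$ is bounded by $C\,h\,\|\vect n\cdot\nabla q\|_{L^2(\OGamma)}^2$ — again because each tetrahedron lies in only boundedly many patches $\omega(T)$ — which equals $C\rho_p^{-1}h\,s_h(q,q)\simeq C\,s_h(q,q)$ since $\rho_p=c_p h$ by \eqref{parameters}; this is the second term in \eqref{mainest3}. The third sum, $\sum_T h_T\cdot h_T^2\|\nabla q\|_{L^2(\omega(T))}^2\lesssim h^2\|\nabla q\|_{L^2(\OGamma)}^2\lesssim h\sum_{T\in\cT_h^\Gamma}h_T\|\nabla q\|_{L^2(T)}^2 = h\,\|q\|_{1,h}^2$, is a higher-order multiple of the left-hand side, so for $h$ sufficiently small (say $h\le h_0$ with $h_0$ determined by the suppressed constants) it can be absorbed into $\|q\|_{1,h}^2$ on the left, yielding \eqref{mainest3}.

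The one genuine subtlety — the ``main obstacle'' — is the bookkeeping in passing from Lemma~\ref{Lemma_main}, which controls $\|q\|_{L^2}$, to a statement about $\|\nabla q\|_{L^2}$: one is really applying the neighborhood mechanism to the derivative of a finite element function, so one must confirm that the proof in section~\ref{sectproof} only uses that the argument is a piecewise polynomial of some fixed degree (which $\partial_i q$ is) together with standard inverse inequalities — and does \emph{not} use the zero-mean normalization $\int_\Gamma q\,ds=0$ in a way that would fail for $\partial_i q$. Inspection of that proof confirms this: the zero-mean constraint plays no role in the neighborhood estimate itself (it is only used later, when the estimate is combined with the surface Poincaré/inf-sup inequalities). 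Everything else is routine summation over patches with bounded overlap, the inverse inequality, and the parameter choice $\rho_p=c_p h$ from \eqref{parameters}, together with the absorption of the $O(h)$ term, all of which require only $h$ small and the quasi-uniformity and shape-regularity assumptions already in force.
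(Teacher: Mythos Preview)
Your derivation of \eqref{mainest3} from \eqref{mainest2} is correct and matches the paper's. But your argument for \eqref{mainest2} has a genuine gap.

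You propose to apply the neighborhood estimate \eqref{mainest} componentwise to $\partial_i q$. However, $\partial_i q$ is in general \emph{discontinuous} across element faces (since $q\in V_h^k$ is only $C^0$), so $\partial_i q\notin H^1(\omega(T))$. The proof of Lemma~\ref{Lemma_main} in section~\ref{sectproof} genuinely requires the argument to be in $H^1(\omega(T))$: the decisive step
\[
\|q\|_{L^2(C_S\cap\omega(T))}^2 \lesssim h_T\|q\|_{L^2(T'\cap P_\Gamma(T))}^2 + h_T^2\|\vect n_{\vect x_0}\cdot\nabla q\|_{L^2(C_S\cap\omega(T))}^2
\]
is a one-dimensional fundamental-theorem-of-calculus (Poincar\'e/trace) argument along lines parallel to $\vect n_{\vect x_0}$, and the cylinder $C_S\cap\omega(T)$ generally crosses several tetrahedra. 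For a function with jumps across interior faces this step fails; one would have to add jump terms and control them separately, which is nontrivial and which you do not address. So your assertion that the proof ``applies to any bulk finite element function'' is incorrect: the zero-mean constraint is indeed irrelevant, but global continuity on $\omega(T)$ is not.

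The paper's argument avoids this difficulty and is shorter. Given $T$ and its regular neighbor $T'$ from Lemma~\ref{Lemma_main}, set $c_0:=|T'|^{-1}\int_{T'}q\,\diff\vect x$. Apply \eqref{mainest} once, to the \emph{continuous} function $q-c_0$, and combine the inverse inequality
\[
\|\nabla q\|_{L^2(T)}=\|\nabla(q-c_0)\|_{L^2(T)}\lesssim h_T^{-1}\|q-c_0\|_{L^2(T)}
\]
with the local Poincar\'e inequality $\|q-c_0\|_{L^2(T')}\lesssim h_T\|\nabla q\|_{L^2(T')}$. This yields \eqref{mainest2} directly, with no commutator of $\partial_i$ and $\vect n\cdot\nabla$, no freezing of the normal, and no bookkeeping about a common $T'$ for three components (there is only one application of the lemma).
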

\begin{proof}
 Take $T \in \cT_h^\Gamma$  and the corresponding  $T'  \in  \omega(T)\cap \cT^{\Gamma}_{\textnormal{reg}}$ as in Lemma~\ref{Lemma_main}.  Take $q \in Q_h$ and define $c_0 \coloneqq \frac{1}{|T'|}\int_{T'} q \diff{\vect x}$.
Note the (local) Poincare inequality $\|q-c_0\|_{L^2(T')}\lesssim h_T \|\nabla q\|_{L^2(T')}$. Using this, a finite element inverse inequality, and \eqref{mainest} we obtain
\begin{align*}
 \|\nabla q\|_{L^2(T)}& = \|\nabla (q-c_0)\|_{L^2(T)} \lesssim h_T^{-1} \|q-c_0\|_{L^2(T)} \\
  & \lesssim  h_T^{-1}\|q-c_0\|_{L^2(T')} + \|\vect n \cdot \nabla q\|_{L^2(\omega(T))} + h_T \|\nabla q\|_{L^2(\omega(T))}\\
  &  \lesssim  \|\nabla q \|_{L^2(T')} + \|\vect n \cdot \nabla q\|_{L^2(\omega(T))} + h_T \|\nabla q\|_{L^2(\omega(T))},
\end{align*}
which is the desired estimate~\eqref{mainest2}. Squaring and multiplying \eqref{mainest2} by $h_T$, summing over $T\in \cT_h^\Gamma$ and using a finite overlap property we obtain
\[
  \|q\|_{1,\,h}^2 \lesssim \|q\|_{1,\,\textnormal{reg}}^2 + s_h(q,q) + h^2 \|q\|_{1,\,h}^2.
\]
For $h$ sufficiently small this yields the result \eqref{mainest3}.
\end{proof}
\ \\[1ex]
These results are used to derive a condition that is equivalent to \eqref{LBB2}.
\begin{lemma} \label{lemequi2} For $h$ sufficiently small, the condition \eqref{LBB2} is equivalent to the following one:
 \begin{align}
  \|q\|_{1,\,\textnormal{reg}} & \lesssim \sup_{\bv\in\bU_h}\frac{b_T(\bv,q)}{\|\bv\|_A} +s_h(q,q)^{\frac12}\quad\forall~q\in Q_h.\label{LBB3}
\end{align}
\end{lemma}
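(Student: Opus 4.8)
The plan is to exploit Corollary~\ref{corol}, specifically the comparison \eqref{mainest3}, as the only nontrivial ingredient; the rest is elementary manipulation of the inf-sup inequalities. Since $\cT^\Gamma_{\textnormal{reg}} \subset \cT_h^\Gamma$, the seminorm $\|\cdot\|_{1,\,\textnormal{reg}}$ sums over a subcollection of the elements appearing in $\|\cdot\|_{1,\,h}$, hence $\|q\|_{1,\,\textnormal{reg}} \le \|q\|_{1,\,h}$ for all $q \in Q_h$. This immediately gives the implication \eqref{LBB2} $\Rightarrow$ \eqref{LBB3}, without any smallness assumption on $h$.

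For the reverse implication \eqref{LBB3} $\Rightarrow$ \eqref{LBB2}, I would first invoke \eqref{mainest3}, valid for $h$ sufficiently small, to write $\|q\|_{1,\,h}^2 \lesssim \|q\|_{1,\,\textnormal{reg}}^2 + s_h(q,q)$. Then I would insert the hypothesis \eqref{LBB3} to bound $\|q\|_{1,\,\textnormal{reg}}$, obtaining
\[
\|q\|_{1,\,h}^2 \lesssim \Big(\sup_{\bv\in\bU_h}\frac{b_T(\bv,q)}{\|\bv\|_A} + s_h(q,q)^{\frac12}\Big)^2 + s_h(q,q).
\]
Since $s_h(q,q) = \big(s_h(q,q)^{\frac12}\big)^2 \le \big(\sup_{\bv}\tfrac{b_T(\bv,q)}{\|\bv\|_A} + s_h(q,q)^{\frac12}\big)^2$ (the supremum being nonnegative), the second term on the right is absorbed into the first, and taking square roots yields \eqref{LBB2}.

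There is essentially no obstacle here: the substantive work has already been carried out in Lemma~\ref{Lemma_main} and Corollary~\ref{corol}. The only point requiring a small amount of care is that the passage from \eqref{LBB3} to \eqref{LBB2} relies on \eqref{mainest3}, which is where the hypothesis ``$h$ sufficiently small'' enters (it is needed to absorb the $h^2\|q\|_{1,\,h}^2$ remainder term arising in the proof of Corollary~\ref{corol}); this is why the equivalence in the present lemma is only asserted for $h$ sufficiently small, whereas the forward implication in fact holds for all $h$.
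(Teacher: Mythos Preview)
Your proof is correct and follows exactly the paper's approach: the forward implication is immediate from $\|q\|_{1,\,\textnormal{reg}} \le \|q\|_{1,\,h}$, and the reverse implication is a direct application of \eqref{mainest3}. The paper's own proof is even terser (one line each direction), but the content is identical.
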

\begin{proof}
Clearly, \eqref{LBB2} implies \eqref{LBB3}.
The reverse direction follows from \eqref{mainest3}.
\end{proof}

\subsection{Proof of Lemma~\ref{Lemma_main}} \label{sectproof}

For proving that for each $T\in \cT_h^\Gamma$ there exists $T' \in \omega(T)\cap \cT_{\textnormal{reg}}^\Gamma$ such that \eqref{mainest} holds we use a construction based on a local  graph representation of $\Gamma$ over a tangent plane.

Consider an arbitrary $ T \in \cT_h^\Gamma$. Due to shape regularity, the number of elements in $\omega(T)$ is uniformly bounded by some constant~$K_\cT$.  Furthermore, there exists a constant $c_{1,\cT} \in (0,1]$ that depends
only on the shape regularity property such that
\begin{equation}\label{c1}
	B(\vect x; c_{1,\cT} h_T ) \cap \Omega_h^\Gamma \subset \omega(T)\quad  \forall ~\vect x \in T.
\end{equation}
Here and further $B(\vect x; r) \subset \mathbb{R}^3$ is the ball of radius $r$ centered at $\vect x$. Let $L$ be a given plane. The orthogonal projection of $T$ on $L$ is denoted by
$P_L(T)$. This projection is either a triangle or a convex quadrilateral. From elementary geometry it follows that all interior angles of $P_L(T)$ are bounded away from zero and the lower bound, which is independent of $L$ and $T$, depends only on shape regularity properties of $\cT_h$. This implies that there exists a strictly positive constant $c_{2,\cT}$, independent of $L$ and $T$, but dependent on shape regularity properties, such that
\begin{equation} \label{c2}
	\frac{|B(\vect x;r h_T) \cap P_L(T)|}{r^2 h_T^2} \geq c_{2,\cT} \quad \text{for all}~\vect x \in P_L(T)~,~r \in (0,1].
\end{equation}
Take $\vect x_0 \in \Gamma_T=T\cap \Gamma$. Let $\lPlane$ be the tangential plane at $\vect x_0$. We define a local (in $\omega(T)$) Euclidean coordinate system with origin at $\vect x_0$ and such that $\vect x = (z_1, z_2, z_3) \in \lPlane$ iff $z_3 = 0$. We write $\vect z \coloneqq (z_1, z_2, 0) \in \lPlane$. For $h$ sufficiently small, the surface $\Gamma\cap \omega(T)$ can be represented as a graph over $\lPlane$. We denote the graph function by $g$, i.e $\vect x \in \Gamma \cap \omega(T)$ can be represented as $\vect x = \vect z + g(\vect z) \vect n_{\vect x_0}$ with $\vect z \in \lPlane$ and $\vect n_{\vect x_0}$ a unit normal on $\lPlane$. We  have $g(\vect 0) = 0$ and
\begin{equation} \label{estg}
	\|g\|_{L^\infty(\omega(T) \cap \lPlane)}  \leq c_\Gamma h_T^2,
\end{equation}
with a constant $c_\Gamma$ that depends only on the local smoothness of $\Gamma$. We assume that $h_T$ is sufficiently small such that
\begin{equation} \label{esth}
	c_\Gamma h_T \leq \frac12 c_{1,\cT}
\end{equation}
holds with $c_{1,\cT}$ from \eqref{c1}. A projection $P_\Gamma(T) \subset \Gamma$ is defined in two steps. First, $P_{\lPlane}(T)$ is the orthogonal projection of $T$ on the plane $\lPlane$ as introduced above. Then the projection $P_\Gamma(T)$ is defined as the graph of $\Gamma$ over the projection $P_{\lPlane}(T)$ (cf. Fig.~\ref{Fig:illust}):
\[
	P_\Gamma(T) \coloneqq \{\,\vect z + g(\vect z) \vect n_{\vect x_0}\::\:\vect z \in P_{\lPlane}(T)\,\}.
\]

The set of \emph{local regular elements}  is defined as follows:
\begin{equation} \label{defregG}
	\omega^{\Gamma}_{\textnormal{reg}}(T)\coloneqq  \{\, T' \in \omega(T)\::\:|T' \cap P_{\Gamma}(T)| \geq \hat c_\cT h_T^2\,\}, \quad \hat c_\cT\coloneqq  \frac14 K_\cT^{-1} c_{1,\cT}^2 c_{2,\cT}.
\end{equation}

\begin{lemma} \label{lemneighG}
	For $T \in \cT_h^\Gamma$ the set $\omega^\Gamma_{\textnormal{reg}}(T)$ is nonempty.
\end{lemma}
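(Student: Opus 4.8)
The plan is an area-counting (pigeonhole) argument. Since $P_\Gamma(T)$ is a two-dimensional patch of $\Gamma$, it suffices to produce a subset $S\subset P_\Gamma(T)$ that is contained in $\bigcup_{T'\in\omega(T)}\overline{T'}$ and whose surface measure satisfies $|S|\ge\tfrac14 c_{1,\cT}^2 c_{2,\cT}\,h_T^2$. Granting this, write $S=\bigcup_{T'\in\omega(T)}\bigl(S\cap\overline{T'}\bigr)$; since $\omega(T)$ contains at most $K_\cT$ tetrahedra, subadditivity of the surface measure yields some $T'\in\omega(T)$ with
\[
  |P_\Gamma(T)\cap\overline{T'}|\;\ge\;|S\cap\overline{T'}|\;\ge\;\frac{|S|}{K_\cT}\;\ge\;\frac{c_{1,\cT}^2 c_{2,\cT}}{4K_\cT}\,h_T^2\;=\;\hat c_\cT h_T^2,
\]
which is exactly the membership condition \eqref{defregG} for $T'\in\omega^\Gamma_{\textnormal{reg}}(T)$. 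So the lemma reduces to constructing such an $S$.

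For the construction I would work in the local coordinates centered at $\vect x_0\in\Gamma_T$ with $\lPlane=\{z_3=0\}$ as set up above, and take
\[
  D\coloneqq B(\vect 0;\tfrac12 c_{1,\cT}h_T)\cap P_{\lPlane}(T)\subset\lPlane,\qquad
  S\coloneqq\bigl\{\,\vect z+g(\vect z)\vect n_{\vect x_0}\::\:\vect z\in D\,\bigr\}\subset P_\Gamma(T) .
\]
(For $h$ small, $D$ lies in the domain of the graph function $g$ and the quadratic bound \eqref{estg} applies on $D$; this needs only that $\Gamma$ be a graph over $\lPlane$ on a ball of $h$-independent radius, which the smoothness of $\Gamma$ provides.) Two things must then be checked. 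First, $S\subset\bigcup_{T'\in\omega(T)}\overline{T'}$: for $\vect z\in D$ one has $|\vect z|\le\tfrac12 c_{1,\cT}h_T$, so by \eqref{estg} and \eqref{esth}
\[
  |\vect z+g(\vect z)\vect n_{\vect x_0}|\le|\vect z|+|g(\vect z)|\le\tfrac12 c_{1,\cT}h_T+c_\Gamma h_T^2\le c_{1,\cT}h_T,
\]
and since this point lies on $\Gamma\subset\overline{\OGamma}$, property \eqref{c1} applied with $\vect x=\vect x_0\in T$ places it in $\omega(T)$. Second, the lower area bound: the graph parametrization $\vect z\mapsto\vect z+g(\vect z)\vect n_{\vect x_0}$ has area element $\sqrt{1+|\nabla g(\vect z)|^2}\,d\vect z\ge d\vect z$, hence $|S|\ge|D|$, and since $\vect 0=P_{\lPlane}(\vect x_0)\in P_{\lPlane}(T)$ and $r\coloneqq\tfrac12 c_{1,\cT}\in(0,1]$, estimate \eqref{c2} gives $|D|\ge c_{2,\cT}r^2 h_T^2=\tfrac14 c_{1,\cT}^2 c_{2,\cT}h_T^2$. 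These two facts, together with the pigeonhole step above, finish the proof.

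No genuinely hard estimate is involved; the only care needed is tracking the geometric constants so that the final bound is exactly $\hat c_\cT h_T^2$ with $\hat c_\cT=\tfrac14 K_\cT^{-1}c_{1,\cT}^2 c_{2,\cT}$. The one delicate interplay is between \eqref{esth} and \eqref{c1}: inequality \eqref{esth} is precisely what lets the $O(h_T^2)$ graph-height correction $c_\Gamma h_T^2$ be absorbed into half of the available radius $c_{1,\cT}h_T$, so that graph points over the half-radius ball still fall inside $\omega(T)$; and \eqref{c2}, i.e.\ the uniform lower bound on the interior angles of the planar projection $P_{\lPlane}(T)$, is what prevents $B(\vect 0;\tfrac12 c_{1,\cT}h_T)\cap P_{\lPlane}(T)$ from being degenerate and guarantees $|D|\gtrsim h_T^2$. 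The interior-versus-closure and domain-of-$g$ technicalities are harmless and absorbed in ``$h$ sufficiently small''.
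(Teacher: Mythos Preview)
Your proof is correct and is essentially identical to the paper's own argument: your set $D$ is the paper's $B_{\lPlane}^\ast$, your $S$ is the paper's $B_\Gamma^\ast$, and the three steps (lower bound on $|D|$ via \eqref{c2}, inclusion $S\subset\omega(T)$ via \eqref{estg}--\eqref{esth}--\eqref{c1}, and the pigeonhole over the $\le K_\cT$ elements of $\omega(T)$) match exactly. Your added remark that $|S|\ge|D|$ follows from the graph area element $\sqrt{1+|\nabla g|^2}\ge 1$ is in fact slightly more explicit than the paper's bare assertion of this inequality.
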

\begin{proof}
Let $\vect x_0 \in \Gamma_T$ be as above. Note that $\vect x_0 = \vect 0$ in the local coordinate system. Define
\[
	B_{\lPlane}^\ast \coloneqq B(\vect x_0; \frac12 c_{1,\cT} h_T) \cap P_{\lPlane}(T).
\]
Note that due to \eqref{c2} we have
\begin{equation} \label{BL2}
	|B^\ast_{\lPlane}| \geq \frac14c_{2,\cT} c_{1,\cT}^2 h_T^2.
\end{equation}
We lift $B_{\lPlane}^\ast$ to the surface $\Gamma$
\[
	B_\Gamma^\ast \coloneqq \{\,\vect z + g(\vect z) \vect n_{\vect x_0}\::\:z \in B_{\lPlane}^\ast\,\} \subset P_\Gamma(T).
\]
Using~\eqref{estg} and \eqref{esth}, for $\vect x = \vect z + g(\vect z) \vect n_{\vect x_0} \in B_\Gamma^\ast$ we get
\[
	\|\vect x - \vect x_0\| = \|\vect x\| \leq \|\vect z\| + |g(\vect z)| \leq \frac12 c_{1,\cT} h_T + c_\Gamma h_T^2 \leq c_{1,\cT} h_T.
\]
Hence, $B_\Gamma^\ast \subset \omega(T)$. Noting that $|B_\Gamma^\ast| \geq |B_{\lPlane}^\ast|$ and using \eqref{BL2}, we get
\[
	|B_\Gamma^\ast| \geq \frac14 c_{2,\cT} c_{1,\cT}^2 h_T^2.
\]
Due to $B_\Gamma^\ast \subset \omega(T)$ we have
$
	B_\Gamma^\ast = \bigcup\limits_{T' \in\omega(T)} (B_\Gamma^\ast \cap T')
$
and
\[
	\frac14 c_{2,\cT} c_{1,\cT}^2 h_T^2 \leq  |B_\Gamma^\ast| \leq K_\cT \max_{T' \in  \omega(T)} |B_\Gamma^\ast \cap T'| \leq K_\cT \max_{T' \in  \omega(T)} |P_\Gamma(T) \cap T'|.
\]
This implies that there exists $T' \in \omega(T)$ with $|P_\Gamma(T) \cap T'| \geq  \hat c_\cT h_T^2$, with $\hat c_\cT=\frac14 K_\cT^{-1} c_{1,\cT}^2 c_{2,\cT}$.
\end{proof}

\subsubsection{Completing the proof of Lemma~\ref{Lemma_main}}
We shall need the following simple technical result.
\begin{lemma}\label{Lem:AreaPer} Let $S\subset\mathbb{R}^2$ be a simply-connected domain, such that $|S|=1$ and $\partial S$ is a rectifiable curve of length $L$. Then there is a disc $D\subset S$, such that  $\mbox{radius}(D)\ge c>0$, where $c$ depends only on $L$.
\end{lemma}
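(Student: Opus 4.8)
The plan is to reduce the statement to a lower bound on the \emph{inradius} of $S$,
\[
 r \coloneqq \sup\{\,\rho>0:\ B(\vect x;\rho)\subset S\ \text{for some}\ \vect x\in\mathbb{R}^2\,\},
\]
and then to bound $r$ from below using only $|S|$ and $L$. Once we know $r\ge c(L)$ for some $c(L)>0$ depending only on $L$, the definition of the supremum yields an open disc $D\subset S$ of radius exceeding $\tfrac12 r>\tfrac12 c(L)$, which is the claimed disc (with $c\coloneqq\tfrac12 c(L)$). Note that the factor $|S|=1$ is only a normalisation: the inradius scales like the length scale and the estimate below is of the form $r\gtrsim |S|/L$.

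The first step is the elementary observation that every $\vect x\in S$ satisfies $\mathrm{dist}(\vect x,\partial S)\le r$, since the open ball $B(\vect x;\mathrm{dist}(\vect x,\partial S))$ is contained in $S$ and hence its radius cannot exceed the inradius. Consequently $S$ is contained in the closed $r$-neighbourhood of the curve $\partial S$, and therefore
\[
 1=|S|\le |N_r(\partial S)|,\qquad N_r(\partial S)\coloneqq\{\vect y\in\mathbb{R}^2:\mathrm{dist}(\vect y,\partial S)\le r\}.
\]

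The second step is to estimate $|N_r(\partial S)|$ from above \emph{in terms of $L$ alone}. Parametrising $\partial S$ by arclength on $[0,L]$ and choosing points along it with consecutive arclength spacing at most $r$ (there are at most $L/r+1$ of them when $r\le L$), each resulting sub-arc lies in a ball of radius $r$, so $N_r(\partial S)$ is covered by the corresponding concentric balls of radius $2r$; this gives $|N_r(\partial S)|\lesssim rL+r^2$ (when $r>L$ the whole curve fits in a single ball of radius $r$ and $|N_r(\partial S)|\lesssim r^2$). Combining with $1\le |N_r(\partial S)|$ and distinguishing the cases $r\le L$ and $r>L$ (using $r^2\le rL$ in the first) yields $r\gtrsim 1/L$ in the first case and $r\gtrsim 1$ in the second; in particular $r\ge c(L)>0$ with $c(L)$ depending only on $L$. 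If one prefers, the planar isoperimetric inequality $L\ge 2\sqrt{\pi}\,|S|^{1/2}=2\sqrt{\pi}$ shows that the case $r\le L$ is always the binding one, so that one may take $c(L)=(16\pi L)^{-1}$.

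The argument is short, and the only place requiring a little care is the tubular-neighbourhood area bound $|N_r(\partial S)|\lesssim rL+r^2$; this is the single point where the hypothesis on $\partial S$ enters. I would also remark that the assumption that $\partial S$ is a \emph{single} rectifiable curve (which is a consequence of $S$ being simply connected) is essential: for a boundary with many components — for instance that of a thin annulus, or of a disc with many small holes — the neighbourhood estimate acquires a factor equal to the number of boundary components and the conclusion of the lemma fails.
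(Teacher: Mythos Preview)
Your proof is correct and takes a genuinely different route from the paper's. The paper argues combinatorially: it tiles $\mathbb{R}^2$ by squares of side $1/N$, notes that at least $N^2$ of them meet $S$ (since $|S|=1$) while at most $4N\lceil L\rceil$ meet $\partial S$ (an arc of length $1/N$ can touch at most four grid squares), and then chooses $N=4\lceil L\rceil+1$ to produce a square meeting $S$ but not $\partial S$; such a square is entirely contained in $S$ and hence holds a disc of radius $\tfrac{1}{2}(4\lceil L\rceil+1)^{-1}$. Your approach is the continuous counterpart: you bound the inradius directly via $S\subset N_r(\partial S)$ and the tube estimate $|N_r(\partial S)|\le 4\pi(Lr+r^2)$ obtained by covering the curve with $O(L/r)$ balls. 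Both are elementary and give the same $c\simeq 1/L$ scaling. Your argument yields a slightly cleaner explicit constant and makes the dependence on the boundary being a \emph{single} curve more visible (the $+1$ in your ball count, respectively the rounding in the paper's piece count, becomes a $+k$ for $k$ boundary components, which is exactly what breaks the disc-with-many-holes example you mention); the paper's grid argument, on the other hand, avoids invoking the isoperimetric inequality altogether.
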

\begin{proof} For some fixed $N \in \Bbb{N}$, consider the regular tiling  $\mathcal{G}$ of $\mathbb{R}^2$ with squares of size  $\frac1N\times\frac1N$. Consider the subsets $\mathcal{G}_S=\{K\in\mathcal{G}: |K\cap S|>0\}$ and
$\mathcal{G}_{\partial S}=\{K\in\mathcal{G}: {K}\cap \partial S\neq\emptyset\}$.
Since $|S|=1$ and $S\subset\bigcup_{K\in\mathcal{G}_S}\overline{K}$ we have  $\#(\mathcal{G}_S)\ge N^2$. From the simple observation that a piece of curve of length $\frac1N$ intersects at most 4 squares one gets the bound
$\#(\mathcal{G}_{\partial S})\le 4N\lceil L\rceil$; see, e.g.,~\cite{hunter1979operations}.
Therefore, taking $N=4\lceil L\rceil+1$ we have at least one $K\in\mathcal{G}_S\setminus\mathcal{G}_{\partial S}$ and so there is $D\subset S$ with $\mbox{radius}(D)\ge \frac12(4\lceil L\rceil+1)^{-1}$.
\end{proof}
\ \\[1ex]
Consider an arbitrary $T\in \cT^{\Gamma}_h$. The constant $\hat c_\cT$, defined in \eqref{defregG}, is the one that we use in the definition of the set $\T_{\textnormal{reg}}^\Gamma$ in \eqref{defregGglobal}.
For $T' \in  \omega^{\Gamma}_{\textnormal{reg}}(T)$ we have
\begin{equation} \label{largeest}
	|\Gamma_{T'}|= |T'\cap \Gamma| \geq |T' \cap P_\Gamma(T)| \geq \hat c_\cT h_T^2.
\end{equation}
Therefore, we have
\begin{equation} \label{inclusion}
 \omega^\Gamma_{\textnormal{reg}}(T) \subset \cT^{\Gamma}_{\textnormal{reg}} \quad \text{for all}~~T \in \cT_h^\Gamma.
\end{equation}

Take $q \in Q_h$. The surface $P_\Gamma(T)$ is the graph of a function $g$ on $P_{\lPlane}(T)$. Hence, there is a subset $S \subset P_{\lPlane}(T)$ such that
\[
	T' \cap P_\Gamma(T) = \{\,\vect z + g(\vect z) \vect n_{\vect x_0}\::\:\vect z \in S\,\}.
\]
 Using the surface area formula for the graph and  \eqref{largeest}, we get
\[
	\hat c_\cT h_T^2\le |T' \cap P_\Gamma(T)| = \int_S\sqrt{1+\|\nabla g\|^2}\diff{s}\le \sqrt{1+\max_{S}\|\nabla g\|^2}|S|\le (1+ch^2)|S|,
\]
\begin{wrapfigure}{l}{.45\linewidth}
	\begin{subfigure}{1.\linewidth}
		\def\svgwidth{\linewidth}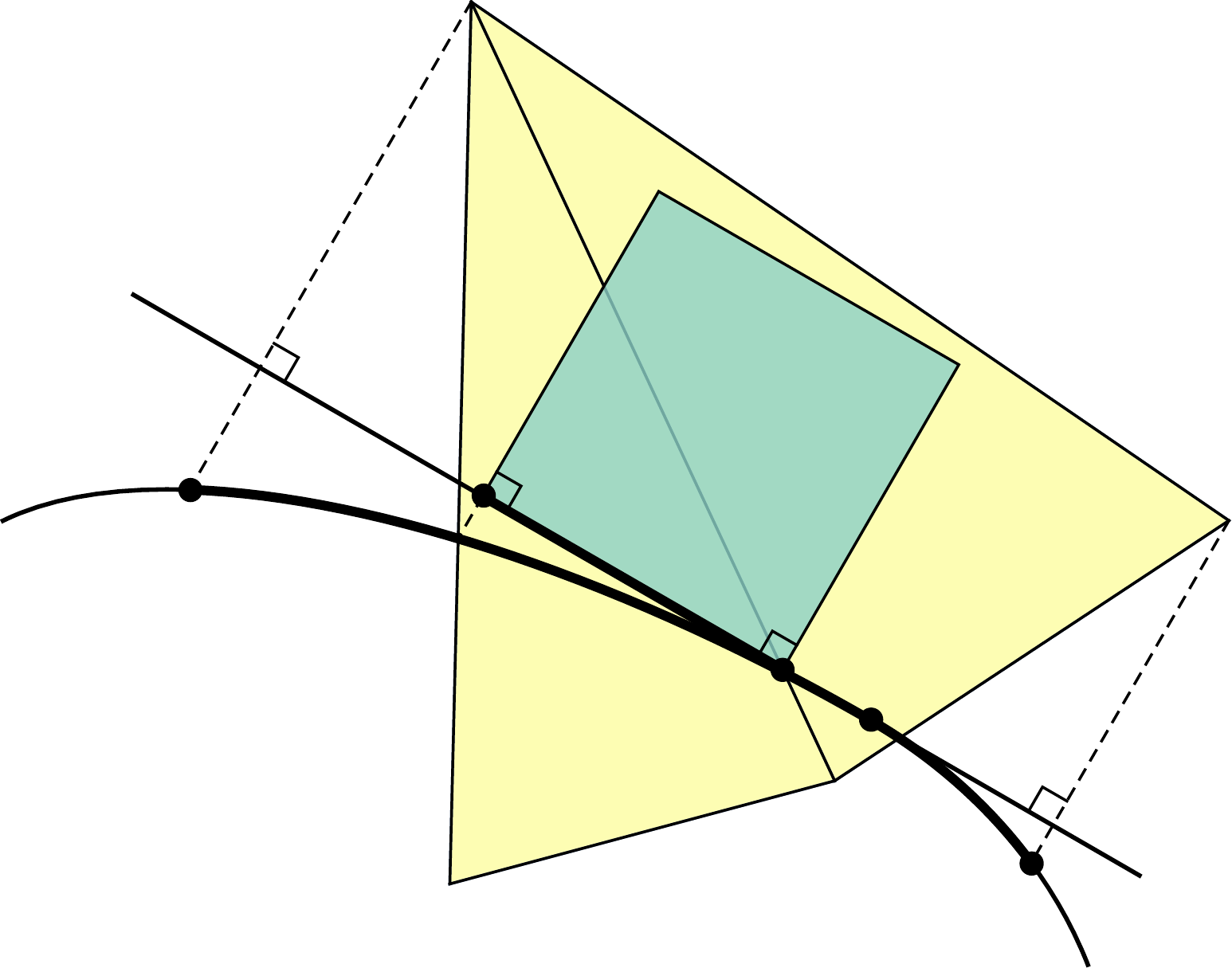
	\end{subfigure}%
	\caption{2D illustration for the proof of Lemma~\ref{Lemma_main}} \label{Fig:illust}
	\vskip .5cm
\end{wrapfigure}
where we used $\nabla g(\vect x_0) = \vect 0$, $\vect x_0 \in \mathcal{O}_h(S)$, and $\|\nabla g\|_{L^\infty(S)} \lesssim h$ due to the $C^2$-smoothness of $\Gamma$. Hence, we obtain
$
	|S| \gtrsim h_T^2.
$
Using assumption~\eqref{ass1}, we estimate the perimeter of $S$:
\[
|\partial S|\lesssim |\partial (T' \cap P_\Gamma(T))| \lesssim h_T.
\]
From assumption~\eqref{ass1} we  have that $T' \cap \Gamma$, hence also $T' \cap P_\Gamma(T) $,  is simply-connected,  which implies that $S$ is  simply-connected. We now combine the result of Lemma~\ref{Lem:AreaPer} with a scaling argument to find a disc $D_S\subset S$ such that
\begin{equation}\label{D_S}
	\mbox{radius}(D_S) \gtrsim h_T.
\end{equation}
For the \textit{circular} cross-section $D_S$ we define a corresponding cylinder:
\[
	C_S \coloneqq \{\,\vect z + \alpha \vect n_{\vect x_0}\::\:\vect z \in D_S, ~\alpha \in \Bbb{R}\,\}. 
\]
Due to  $D_S \subset P_{\lPlane}(T)$,  \eqref{D_S} and the shape regularity of $T$  one can inscribe a ball $B_T\subset C_S \cap T$ such that  $\mbox{radius}(B_T) \gtrsim h_T$;
 see Figure~\ref{Fig:illust}.
By a standard scaling and norm equivalence argument\footnote{We outline the argument: By  norm equivalence in a finite dimensional space one obtains $\|q\|_{B(R)}\le C \|q\|_{B(1)}$, where $B(R)$ is ball of radius $R$, $q\in \mathbb{P}_n(\mathbb{R}^3)$, and $C$ depends only on $R$ and $n$. Further one uses a linear scaling to map $B(1)$ to $B_T$ and sets $R$  large enough, but independent of $T$ and $h$, such that $T$ always lies in the image of $B(R)$.} it follows that
\[
	\|q\|_{L^2(T)} \lesssim \|q\|_{L^2(B_T)}
\]
holds.
 Using this we obtain
\begin{align*}
	\|q\|_{L^2(T)}^2 & \lesssim \|q\|_{L^2(B_T)}^2 \lesssim \|q\|_{L^2(C_S \cap T)}^2 \lesssim \|q\|_{L^2(C_S\cap \omega(T))}^2 \\ & \lesssim h_T \|q\|_{L^2(T' \cap P_\Gamma(T))}^2 + h_T^2\|\vect n_{\vect x_0}\cdot \nabla q\|_{L^2(C_S\cap \omega(T))}^2 \\
 	& \lesssim h_T \|q\|_{L^2(\Gamma_{T'})}^2 + h_T^2\|\vect n_{\vect x_0}\cdot \nabla q\|_{L^2(\omega(T))}^2.
\end{align*}

Combining this with $h_T\|q\|_{L^2(\Gamma_{T'})}\lesssim \|q\|_{L^2(T')}$ (which follows from \eqref{fund1B} and a FE inverse inequality) and  $\|\vect n_{\vect x_0} - \vect n(\vect y)\| \lesssim h_T$ for all $\vect y \in \omega(T)$, we have
\[
	\|q\|_{L^2(T)}^2 \lesssim \|q\|_{L^2(T')}^2 + h_T^2 \|\vect n \cdot \nabla q\|_{L^2(\omega(T))}^2 + h_T^4 \|\nabla q\|_{L^2(\omega(T))}^2,
\]
which completes the proof.

\subsection{Discrete inf-sup condition is satisfied for trace $\vect P_2$--$P_1$ finite elements}\label{sectmainresult}

We are now ready to derive the main result of our stability analysis. We  will show that the inf-sup stability condition \eqref{LBB3} holds for the case of trace $\vect P_2$--$P_1$ Taylor--Hood finite elements. In the proof we construct a velocity function from $\bU_h$, which delivers control over pressure gradients for all  \emph{regular} tetrahedra. Let $h_0>0$ be sufficiently small depending of $\Gamma$, but not on how $\Gamma$ intersects the background mesh.

\begin{theorem} \label{lemma2}
Consider $k=1$ in \eqref{TaylorHood}, i.e., $\bU_h$, $Q_h$ is the $\vect P_2$--$P_1$ Taylor--Hood pair. Then the inf-sup stability condition \eqref{LBB} is satisfied for $h\le h_0$.
\end{theorem}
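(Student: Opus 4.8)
The plan is to prove the reduced condition \eqref{LBB3} for $k=1$; by Lemmas~\ref{lem:equiv} and \ref{lemequi2} this is equivalent to \eqref{LBB}. So fix $q\in Q_h$. Since $q$ is continuous and piecewise linear, $\nabla q$ is constant on each $T\in\cT_h^\Gamma$; writing $\bg_T\coloneqq\nabla q|_T$ and splitting $\bg_T=\bP\bg_T+(\bn\cdot\bg_T)\bn$ with an extension of $\bn$, the normal parts are exactly what the stabilization penalizes, $\sum_{T\in\cT_h^\Gamma}h_T|T|\,|\bn\cdot\bg_T|^2\simeq s_h(q,q)$. Hence, fixing $\bx_T\in\Gamma_T$ and setting $\bg_T'\coloneqq\bP(\bx_T)\bg_T$, it suffices to construct a single $\bv\in\bU_h$ with $\|\bv\|_A\lesssim\|q\|_{1,\textnormal{reg}}$ and $b_T(\bv,q)\gtrsim\sum_{T\in\cT^{\Gamma}_{\textnormal{reg}}}h_T^4|\bg_T'|^2-C\,s_h(q,q)$, because $\sum_{T\in\cT^{\Gamma}_{\textnormal{reg}}}h_T^4|\bg_T'|^2\gtrsim\|q\|_{1,\textnormal{reg}}^2-C\,s_h(q,q)$.

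I would assemble $\bv$ from local bubbles, one per regular element. For $T'\in\cT^{\Gamma}_{\textnormal{reg}}$ take $\bv_{T'}\coloneqq h_{T'}^2\,\psi_{T'}\,\bg_{T'}'\in\bU_h$, where $\psi_{T'}\in V_h^2$ is a nonnegative scalar patch function, bounded by $1$, supported in $\overline{\omega(T')}$, with $|\nabla\psi_{T'}|\lesssim h_{T'}^{-1}$ and equal to a positive constant on a part of $\Gamma_{T'}$ of area $\gtrsim h_{T'}^2$ — for instance the sum of the four $P_1$ nodal hat functions at the vertices of $T'$, which equals $1$ on $\overline{T'}$. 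Since $T'$ is regular, $\int_{\Gamma_{T'}}\psi_{T'}\diff s\gtrsim h_{T'}^2$; routine scaling together with the trace inequality \eqref{fund1B}, the parameter ranges \eqref{parameters}, and $|\bn\cdot\bg_{T'}'|\lesssim h_{T'}|\bg_{T'}'|$ on $\Gamma\cap\omega(T')$ give $\|\bv_{T'}\|_A^2\lesssim h_{T'}^4|\bg_{T'}'|^2$; and since $\nabla_\Gamma q=\bP\bg_{T'}$ on $\Gamma_{T'}$ with $\bg_{T'}'\cdot\bP\bg_{T'}=|\bg_{T'}'|^2$ at $\bx_{T'}$ up to an $O(h_{T'})$ relative error, the contribution of $\bv_{T'}$ to $b_T(\cdot,q)$ coming from $\Gamma_{T'}$ is $\gtrsim h_{T'}^4|\bg_{T'}'|^2$ (modulo an $O(h^2)\|q\|_{1,\textnormal{reg}}^2$ term that is later absorbed).

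Setting $\bv\coloneqq\sum_{T'\in\cT^{\Gamma}_{\textnormal{reg}}}\bv_{T'}$, finite overlap of the patches $\{\omega(T')\}$ gives $\|\bv\|_A^2\lesssim\sum_{T'\in\cT^{\Gamma}_{\textnormal{reg}}}h_{T'}^4|\bg_{T'}'|^2\lesssim\|q\|_{1,\textnormal{reg}}^2$, and the diagonal parts of $b_T(\bv,q)$ sum to $\gtrsim\|q\|_{1,\textnormal{reg}}^2-C\,s_h(q,q)$. The hard part is the off-diagonal spillover
\[
\sum_{T'\in\cT^{\Gamma}_{\textnormal{reg}}}\ \sum_{\substack{T''\ne T'\\ \omega(T')\cap\Gamma_{T''}\ne\emptyset}} \int_{\Gamma_{T''}}\bv_{T'}\cdot\bP\bg_{T''}\diff s .
\]
Indeed, a continuous $P_2$ function cannot be supported in a single tetrahedron, so the support of $\bv_{T'}$ meets $\Gamma_{T''}$ for neighbours $T''$ carrying unrelated gradients, and a crude bound makes these terms of the same order as the diagonal ones. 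I would estimate them by Cauchy--Schwarz over elements, the finite overlap property, and Young's inequality; the $\delta\|q\|_{1,\textnormal{reg}}^2$-type terms produced get absorbed into the left-hand side, while the $\delta^{-1}\|q\|_{1,h}^2$-type terms are traded back to $\|q\|_{1,\textnormal{reg}}^2+s_h(q,q)$ via the neighbourhood estimate \eqref{mainest3}.

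The step I expect to be the main obstacle is making the constants in this bookkeeping admissible, i.e.\ guaranteeing that the diagonal term strictly dominates the combined spillover and absorbed remainders. This forces a careful choice of the bubble $\psi_{T'}$ (so that only face/edge neighbours produce spillover and the spillover is as concentrated as possible), uses the continuity of $q$ across shared faces, and fixes the threshold $\hat c_{\cT}$ in \eqref{defregGglobal} together with the mesh size $h_0$. Once \eqref{LBB3} is established, Lemmas~\ref{lemequi2} and \ref{lem:equiv} yield \eqref{LBB}, completing the proof.
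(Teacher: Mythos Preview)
Your overall strategy (reduce to \eqref{LBB3} via Lemmas~\ref{lem:equiv} and \ref{lemequi2}, then build a single test function $\bv\in\bU_h$) matches the paper's. The gap is precisely the step you flag as ``the main obstacle'': with the patch bubbles $\bv_{T'}=h_{T'}^2\psi_{T'}\bg_{T'}'$ the spillover $\int_{\Gamma_{T''}}\bv_{T'}\cdot\bP\bg_{T''}\,ds$ is of the \emph{same} order as the diagonal term, and your proposed Young-inequality bookkeeping cannot close. A crude bound gives, after summation and finite overlap,
\[
\Big|\text{spillover}\Big|\ \lesssim\ \delta\,\|q\|_{1,\textnormal{reg}}^2+\delta^{-1}\|q\|_{1,h}^2,
\]
and trading $\|q\|_{1,h}^2$ for $\|q\|_{1,\textnormal{reg}}^2+s_h(q,q)$ via \eqref{mainest3} yields a term $C\delta^{-1}\|q\|_{1,\textnormal{reg}}^2$ with a constant $C$ that depends only on shape regularity and is not small. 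There is no small parameter left; the coefficient in front of $\|q\|_{1,\textnormal{reg}}^2$ on the right is bounded below by $2\sqrt{C_2C_3C_4}$, which you have no mechanism to make smaller than the diagonal constant $C_1$. Continuity of $q$ across a shared face does not rescue this either: it only says that the component of $\bg_{T'}-\bg_{T''}$ \emph{normal to the face} may be nonzero, which is unrelated to the component normal to $\Gamma$ that $s_h$ penalizes.

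The paper avoids the spillover entirely by a different test function: the classical Verf\"urth edge construction
\[
\bv(\bx)=\sum_{E\in\mathcal{E}_{\textnormal{reg}}}h_E^2\,\phi_E(\bx)\,[\bt_E\cdot\nabla q(\bx)]\,\bt_E,
\]
with $\phi_E$ the quadratic edge-midpoint bubble. The crucial point is that $\nabla q$ is evaluated \emph{at $\bx$}, not frozen at a regular element. Because $q$ is continuous and piecewise linear, the directional derivative $\bt_E\cdot\nabla q$ is the same on every tetrahedron sharing $E$, so each summand is a genuine element of $\bU_h$; and on any $T\in\cT_h^\Gamma$ the pairing with $\nabla_\Gamma q$ produces $\int_{\Gamma_T}\phi_E\,|\bP\bt_E\cdot\nabla q|_T|^2\,ds\ge 0$ plus a controllable normal-derivative remainder. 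Thus there is no cross term of competing size: on non-regular elements the contribution is bounded below by $-c\,h_T^3\|\nabla q\|_{L^2(T)}^2$ (a higher-order term), while on regular elements a ``base-face'' argument shows the positive part dominates $h_T\|\nabla q\|_{L^2(T)}^2$. This sign structure is what your frozen-gradient bubbles lack, and it is the missing idea.
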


\begin{proof} Below we prove \eqref{LBB3}. Due to the results in the Lemmas~\ref{lem:equiv} and~\ref{lemequi2} this implies \eqref{LBB}.
Denote by  $\mathcal{E}_{\textnormal{reg}}$ the set of all edges of tetrahedra from  $\T^\Gamma_{\textnormal{reg}}$.
Let $\widetilde{\mathbf{t}}_E$ be a vector connecting the two endpoints of $E \in \mathcal{E}_{\textnormal{reg}}$ and $\mathbf{t}_E \coloneqq \widetilde{\mathbf{t}}_E/|\widetilde{\mathbf{t}}_E|$.  For each edge $E$ let  $\phi_E$ be the  quadratic nodal finite element basis function corresponding to the midpoint of $E$.
For  $q\in Q_h$ define  $\bv \in \bU_h$ as follows:
\begin{equation}\label{VforQregA}
	\bv(\vect x) \coloneqq \sum_{E\in\mathcal{E}_{\textnormal{reg}}} h_E^2\phi_E(\vect x)\, [\mathbf{t}_E\cdot\nabla q(\vect x)]\mathbf{t}_E.
\end{equation}
using $0 \leq \phi_E \leq 1$ in $T \in \T_h^{\Gamma}$, we obtain
\begin{align}
	& (\bv,\nabla_\Gamma q)_{L^2(\Gamma_T)}=(\bv,\bP\nabla q)_{L^2(\Gamma_T)} \nonumber\\
	&= \int_{\Gamma_T}\sum_{E\in\mathcal{E}_{\textnormal{reg}}}h_E^2\phi_E\, |\bP\mathbf{t}_E\cdot\nabla q|^2 \diff{s}+\int_{\Gamma_T}\sum_{E\in\mathcal{E}_{\textnormal{reg}}}h_E^2\phi_E\, (\bP^\perp\mathbf{t}_E\cdot\nabla q) (\bP\mathbf{t}_E\cdot\nabla q) \diff{s} \nonumber \\
	&\ge\frac12\int_{\Gamma_T} \sum_{E\in\mathcal{E}_{\textnormal{reg}}}h_E^2\phi_E\, |\bP\mathbf{t}_E\cdot\nabla q|^2 \diff{s} -\frac12\int_{\Gamma_T}\sum_{E\in\mathcal{E}_{\textnormal{reg}}}h_E^2\phi_E\, |\bP^\perp\mathbf{t}_E\cdot\nabla q|^2 \diff{s}\nonumber \\
	&\ge\frac12\int_{\Gamma_T} \sum_{E\in\mathcal{E}_{\textnormal{reg}}}h_E^2\phi_E\, |\bP\mathbf{t}_E\cdot\nabla q|^2\diff{s} -\frac12\int_{\Gamma_T}\sum_{E\in\mathcal{E}(T)}h_E^2 |\bn\cdot\nabla q|^2\diff{s} \nonumber \\
	&\ge\frac12\int_{\Gamma_T} \sum_{E\in\mathcal{E}_{\textnormal{reg}}}h_E^2\phi_E\, |\bP\mathbf{t}_E\cdot\nabla q|^2\diff{s} -3h_T^2 \|\bn\cdot\nabla q\|^2_{L^2(\Gamma_T)} \nonumber \\
	& \geq \frac12\int_{\Gamma_T} \sum_{E\in\mathcal{E}_{\textnormal{reg}}}h_E^2\phi_E\, |\bP\mathbf{t}_E\cdot\nabla q|^2\diff{s}-c_1(h_T\|\bn\cdot\nabla q\|_{L^2(T)}^2
	+ h_T^3\|\nabla q\|_{L^2(T)}^2). \label{est1}
\end{align}
For the last bound we used the local trace inequality \eqref{fund1B} and $\nabla^2q = \vect 0$ for the piecewise  linear function $q$.
Hence, for every $T \in \T_h^{\Gamma}$ we have
\begin{equation} \label{est2}
	(\bv,\nabla_\Gamma q)_{L^2(\Gamma_T)}+ c_1 h_T\|\bn\cdot\nabla q\|_{L^2(T)}^2
	\gtrsim  -h_T^3\|\nabla q\|_{L^2(T)}^2.
\end{equation}
We now restrict to $T \in \cT_{\textnormal{reg}}^\Gamma$ and estimate the first term in \eqref{est1}. Take  $T \in \cT_{\textnormal{reg}}^\Gamma$, i.e. $|\Gamma_T| \geq \hat c_\cT h_T^2$ holds. Let $\hat T$ be the unit tetrahedron and $G(\hat{\vect x}) = \vect A\,\hat{\vect x} + \vect b$ an affine mapping such that $G(\hat T) = T$. Define $\hat \Gamma_{\hat T}\coloneqq G^{-1}(\Gamma_T)$,  and for a function $\phi\,:\,T \to \Bbb{R}$, $\hat \phi \coloneqq \phi \circ G$. We then have
\begin{equation} \label{EE5}
	\int_{\Gamma_T} |\phi| \diff{s} \geq c_0 h_T^2 \int_{\hat \Gamma_{\hat T}} |\hat \phi | \diff{s}, \qquad |\hat \Gamma_{\hat T}| \geq \tilde c_\cT >0,
\end{equation}
with a constant $c_0 >0$ that depends only on shape regularity properties and $ \tilde c_\cT$ depending only on shape regularity properties and on $\hat c_\cT$ from \eqref{defregG}.

For the normal vector $\hat\bn$ on $\hat \Gamma_{\hat T}$ we have $\hat \bn =  \frac{\vect A^T\bn \circ G}{\|\vect A^{T}\bn \circ G\|}$. Using $\|\vect A\|=\|\nabla G\|\lesssim h$ and $\|\nabla\bn\|_{L^\infty(\Gamma_{T})} \lesssim 1$,  we check that $\|\nabla \hat \bn\|_{L^\infty(\hat \Gamma_{\hat T})} \lesssim h$. Normals on faces $\hat F$ of $\hat T$ are denoted by $\bn_{\hat F}$. For these normals we take the orientation the same as that of $\hat \bn$ on $\hat \Gamma_{\hat T}$. For $\hat \Gamma_{\hat T}$ we choose a corresponding \emph{base face} $\hat F_b$ as the one that  fits best to $\hat \Gamma_{\hat T}$ in the following sense:
\[
	\min_{ \hat \Gamma_{\hat T}} \|\hat \bn(\cdot)- \hat \bn_{\hat F_b}\| \leq \min_{ \hat \Gamma_{\hat T}} \|\hat \bn(\cdot)- \hat \bn_{\hat F}\|\text{ for all faces $\hat F$ of $\hat T$},
\]
cf. Figure~\ref{Fig:illust2} for a 2D illustration.
The face $F_b = G(\hat F_b)$ is called the base face of $T$. We take a fixed $\epsilon >0$ sufficiently small
such that $\|\nabla \hat \bn\|_{L^\infty(\hat \Gamma_{\hat T})} \leq \epsilon$ holds.
This means that $\hat \Gamma_{\hat T}$ is sufficiently close to a plane. This and the minimization property imply that for $\hat F\neq \hat F_b$ the angle between $\hat \bn$ and $\hat \bn_{\hat F}$  is uniformly bounded below from zero.
To see this, let $\vect x_0 \in  \hat \Gamma_{\hat T}$ be such that $\|\hat\bn(\vect x_0)-\hat\bn_{\hat F_b}\| = \min_{  \hat \Gamma_{\hat T}}\|\hat \bn(\vect x) - \hat \bn_{\hat F_b}\|$, then
\begin{equation}\label{angleBnd}
	\min_{\hat \Gamma_{\hat T}} \|\hat \bn(\cdot)- \hat \bn_{\hat F}\|\ge  \|\hat \bn(\vect x_0)- \hat \bn_{\hat F}\|
	- C \epsilon \ge \|\hat \bn_{\hat F_b} - \hat \bn_{\hat F}\| - \|\hat \bn(\vect x_0)- \hat \bn_{\hat F_b}\|
	- C \epsilon \gtrsim 1,
\end{equation}
for sufficiently small $\epsilon$.

For $\delta >0$ sufficiently small we define the reduced tetrahedron $\hat T_\delta \subset \hat T$
\begin{wrapfigure}{l}{.4\linewidth}
	\begin{subfigure}{1.\linewidth}
		\def\svgwidth{\linewidth}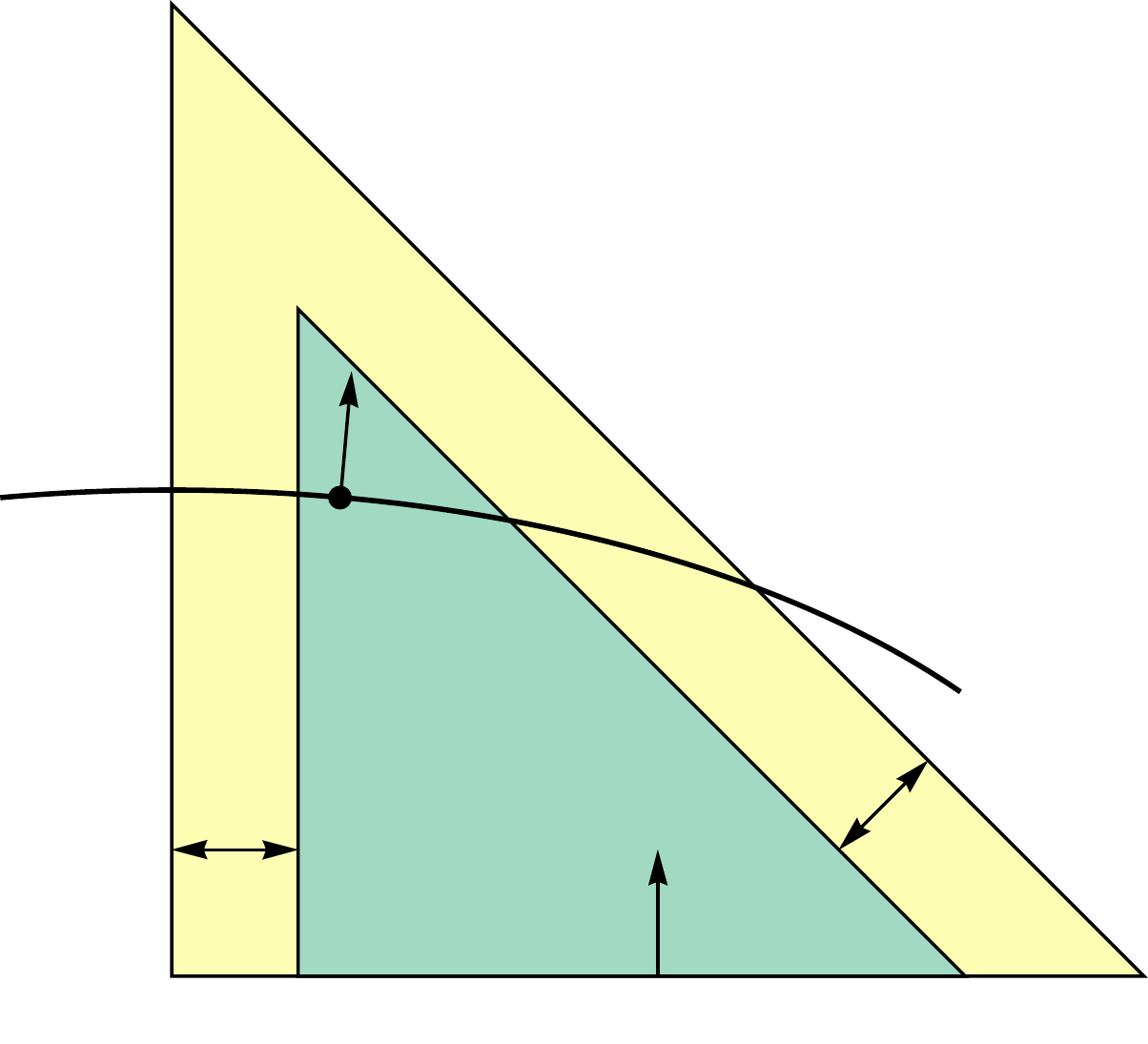
	\end{subfigure}%
	\caption{2D illustration of base face $\hat F_b$ and reduced tetrahedron~$\hat T_\delta$}
	\label{Fig:illust2}
\end{wrapfigure}
\[
	\hat T_\delta \coloneqq \{\,\vect x \in \hat T\::\:{\rm dist}(\vect x, \partial \hat T \setminus \hat F_b) \geq \delta\,\},
\]
cf. Figure~\ref{Fig:illust2}. Note that $\hat T_\delta$ depends on the base face $\hat F_b$. Thanks to \eqref{angleBnd} we can estimate
\[
	\left|\hat \Gamma_{\hat T}\cap \left(\hat T\setminus\hat T_{\delta_0}\right)\right|\lesssim \delta.
\]
Therefore, there exists $\delta_0 >0$, sufficiently small, such that
\[
  |\hat \Gamma_{\hat T} \cap \hat T_{\delta_0}| \geq \frac12 \tilde c_\cT
\]
holds. Let $E$ be an edge of the base face $F_b$ of $T$ and $\phi_E$ the corresponding nodal basis function defined above. We have $\hat \phi_E \geq c_1 \delta_0$ on $\hat T_{\delta_0}$, with a suitable generic constant $c_1>0$. Using this and \eqref{EE5} we obtain
\begin{equation} \label{e3} \begin{split}
	& \int_{\Gamma_T} \phi_E \diff{s}  = \int_{\Gamma_T} |\phi_E| \diff{s} \geq c_0  h_T^2 \int_{\hat \Gamma_{\hat T}} |\hat \phi_E| \diff{s} \\
	& \geq  c_0  h_T^2 \int_{\hat \Gamma_{\hat T} \cap \hat T_{\delta_0}} |\hat \phi_E| \diff{s}  \geq c_0  h_T^2 |\hat \Gamma_{\hat T} \cap \hat T_{\delta_0}| c_1\delta_0 \geq \frac12 c_0c_1 \delta_0 \tilde c_\cT h_T^2 =:C h_T^2,
\end{split}\end{equation}
where the constant $C >0$ is independent of $h$ and of how $\Gamma_T$ intersects $T$.

Consider $\vect x_0 \in \Gamma_T$ such that $\|\bn(\vect x_0)-\bn_{F_b}\| = \min_{\vect x \in \Gamma_T}\|\bn(\vect x)-\bn_{F_b}\|$ and the corresponding projector denoted by $\bP_0\coloneqq \bI- \bn(\vect x_0)\bn(\vect x_0)^T$. We have $\|\bP(\vect x)-\bP_0\| \lesssim h$ for $\vect x \in T$. Using this and \eqref{e3} we can estimate the first term in \eqref{est1} as follows:
\begin{equation} \label{E5_0}
\begin{split}
	\int_{\Gamma_T} \sum_{E\in\mathcal{E}_{\textnormal{reg}}}h_E^2\phi_E\,& |\bP\mathbf{t}_E\cdot\nabla q|^2\diff{s}
	 \gtrsim h_T^2 \sum_{E \subset F_b} \int_{\Gamma_T} \phi_E |\bP\mathbf{t}_E\cdot\nabla q|^2\diff{s} \\
	& \gtrsim h_T^2 \sum_{E \subset F_b}|\bP_0\mathbf{t}_E\cdot\nabla q_{|T}|^2 \int_{\Gamma_T}\phi_E \diff{s}  - c h_T^2 \|\nabla q\|_{L^2(T)}^2 \\
	& \gtrsim h_T^4 \sum_{E \subset F_b}|\bP_0\mathbf{t}_E\cdot\nabla q_{|T}|^2   - c h_T^2 \|\nabla q\|_{L^2(T)}^2.
\end{split}
\end{equation}
Due to the construction of the base face $F_b$ and the choice of $\vect x_0$, we have  that  $|\bn(\vect x_0) \cdot \bn_{F_b}| $ is uniformly bounded away from zero. This implies $\sum_{E \subset F_b}|\bP_0\mathbf{t}_E\cdot\nabla q_{|T}|^2 \gtrsim |\bP_0 \nabla q_{|T}|^2$. Using this we get
\begin{equation} \label{E5}
 \begin{split}
   \int_{\Gamma_T} \sum_{E\in\mathcal{E}_{\textnormal{reg}}}h_E^2\phi_E\, & |\bP\mathbf{t}_E\cdot\nabla q|^2\diff{s} \gtrsim h_T \|\bP_0 \nabla q\|_{L^2(T)}^2- c h_T^2 \|\nabla q\|_{L^2(T)}^2 \\
  &  \gtrsim h_T \|\bP \nabla q\|_{L^2(T)}^2- c h_T^2 \|\nabla q\|_{L^2(T)}^2 \\
  & \gtrsim h_T \|\nabla q\|_{L^2(T)}^2- h_T \|\bn\cdot \nabla q\|_{L^2(T)}^2 -c h_T^2 \|\nabla q\|_{L^2(T)}^2.
 \end{split}
\end{equation}
With the help of \eqref{E5} in \eqref{est1} we obtain for $T \in \cT_{\textnormal{reg}}^\Gamma$:
\begin{equation} \label{E6}
  (\bv,\nabla_\Gamma q)_{L^2(\Gamma_T)}+ c\, h_T\|\bn\cdot\nabla q\|_{L^2(T)}^2
\gtrsim   h_T \|\nabla q\|_{L^2(T)}^2 - c_2 h_T^2\|\nabla q\|_{L^2(T)}^2.
\end{equation}
Combining this with \eqref{est2} and summing over $T \in \cT_h$ yields
\[
 b_T(\bv,q) +s_h(q,q) \gtrsim \|q\|_{1,\,\textnormal{reg}}^2 - c h \|q\|_{1,\,h}^2,
\]
and combining this with \eqref{mainest3} we obtain (for $h$ sufficiently small)
\begin{equation}\label{E7}
 b_T(\bv,q) +s_h(q,q) \gtrsim \|q\|_{1,\,\textnormal{reg}}^2.
\end{equation}
We need the following elementary observation: For positive numbers $\alpha,\beta,\delta$ the inequality $\alpha + \beta^2 \geq c_0 \delta^2$ implies $\alpha + \beta(\beta +\delta) \geq \min\{c_0,1\} \delta(\beta +\delta)$ and thus $\frac{\alpha}{\beta +\delta} + \beta \geq \min\{c_0,1\} \delta$. Using this, the estimate \eqref{E7} implies
\begin{equation}\label{E8}
\frac{ b_T(\bv,q)}{ \|q\|_{1,\,\textnormal{reg}}+s_h(q,q)^\frac12} +s_h(q,q)^\frac12 \gtrsim \|q\|_{1,\,\textnormal{reg}}.
\end{equation}
It remains to estimate $\|\bv\|_A$.
We consider term by term the contributions in $\|\bv\|_A^2$.
Noting  $\|\nabla \phi_E\|_{\infty,T} \lesssim h_T^{-1}$ for $E \subset T$ and \eqref{fund1B} we get
 \begin{equation}\label{aux302}
  \begin{split}
  \|E_s(\bv)\|^2_{L^2(\Gamma)} &+\|\bv\|^2_{L^2(\Gamma)} \lesssim \sum_{T \in \cT_h^\Gamma} \|\nabla \bv\|_{L^2(\Gamma_T)}^2 + \|\bv\|_{L^2(\Gamma_T)}^2 \\
  &\lesssim  \sum_{T \in \cT_h^\Gamma}   h_T^{2}  \|\nabla q\|_{L^2(\Gamma_T)}^2
\lesssim  \sum_{T\in\T_h^\Gamma} h_T\|\nabla q\|_{L^2(T)}^2 = \|q\|_{1,\,h}^2.
 \end{split}
\end{equation}
We  also have for $\tau\lesssim h^{-2}$ and $\rho_u\lesssim h^{-1}$ the relations
 \begin{equation}\label{aux315}
 \begin{aligned}
  \tau\|v_N\|^2_{L^2(\Gamma)}&\leq \tau\|\bv\|^2_{L^2(\Gamma)} = \tau\sum_{T \in \cT_h^\Gamma} \|\bv\|_{L^2(\Gamma_T)}^2 \\
   & \lesssim \tau \sum_{T \in \cT_h^\Gamma} h_T^4 \|\nabla q\|_{L^2(\Gamma_T)}^2 \lesssim
   \tau \sum_{T \in \cT_h^\Gamma} h_T^3 \|\nabla q\|_{L^2(T)}^2 \lesssim \|q\|_{1,\,h}^2,
  \end{aligned}
 \end{equation}
 and
\begin{equation}\label{aux316}
 \begin{split}
  \rho_u \|(\nabla \bv)\bn\|_{L^2(\OGamma)}^2 \leq \rho_u \sum_{T \in \cT_h^\Gamma}\|\nabla \bv\|_{L^2(T)}^2 \lesssim
  \rho_u \sum_{T \in \cT_h^\Gamma} h_T^2\|\nabla q\|_{L^2(T)}^2 \lesssim \|q\|_{1,\,h}^2.
 \end{split}
\end{equation}
From \eqref{aux302}, \eqref{aux315}, and \eqref{aux316} we conclude $\|\bv\|_A \lesssim \|q\|_{1,\,h}$, and using \eqref{mainest3} we  get
\[
 \|\bv\|_A \lesssim   \|q\|_{1,\,\textnormal{reg}}+s_h(q,q)^\frac12.
\]
Combining this with \eqref{E8} completes the proof.
\end{proof}

\begin{remark}\rm
There are several places in the proof of Theorem~\ref{lemma2}, where we use that for $q \in Q_h$ its local gradient $\nabla q_{|T}$ is a constant vector (i.e., $k=1$). In \eqref{est1} using  pressure elements for $k\ge2$ leads to
terms with higher order derivatives, and applying a FE inverse inequality to handle these does not lead to a satisfactory  bound. For the argument in \eqref{E5_0} it is also necessary that the pressure gradient is piecewise constant. A generalization of the analysis presented above, that applies to  $k \ge 1 $ and also addresses the effects of geometric  errors is a topic of current research.
\end{remark}

\section{Error bounds}\label{sec:error}

We consider $k=1$, i.e. the trace Taylor--Hood $\vect P_2$--$P_1$ pair.
Based on the stability result an error analysis for the \emph{consistent} variant, cf. Remark~\ref{remconsistent}, can be derived with standard arguments, which follow the standard line of combining stability, consistency and interpolation results; see, e.g., \cite{brezzi2012mixed} for general treatment of saddle-point problems, and more specific analysis of the surface Stokes problem in~\cite{olshanskii2018finite}.
We outline the arguments below and skip most of the details that can be found elsewhere.
First, we introduce the following bilinear form (with $\tilde A_h(\cdot,\cdot)$ as in Remark~\ref{remconsistent}):
\begin{equation} \label{defk}
 \bA_h\big((\bu,p),(\bv,q)\big)\coloneqq  \tilde A_h(\bu,\bv)+ b_T(\bv,p) +b_T(\bu,q) - s_h(p,q).
\end{equation}
The discrete problem \eqref{discrete}, with $A_h(\cdot,\cdot)$ replaced by $\tilde A_h(\cdot,\cdot)$, then has the compact representation: Determine $(\bu_h,p_h) \in \bU_h \times Q_h$ such that
\begin{equation} \label{discrete1}
 \bA_h\big((\bu_h,p_p),(\bv_h,q_h)\big)=(\blf,\bv_h) - (g,q_h) \quad \text{for all}~~(\bv_h,q_h) \in \bU_h \times Q_h.
\end{equation}
This discrete problem has a unique solution, which is denoted by $(\bu_h,p_h)$. Due to consistency, the true solution of \eqref{strongform-1}--\eqref{strongform-2} $(\bu_T^\ast, p^\ast)$ satisfies
\[
  \bA_h\big((\bu_T^\ast,p^\ast),(\bv,q)\big) = (\blf,\bv) - (g,q).  \label{consis1}
\]
Thus we obtain the Galerkin orthogonality relation,
\[
 \bA_h\big((\bu_T^\ast - \bu_h,p^\ast- p_h)),(\bv_h,q_h)\big) =0~~\text{for all}~~(\bv_h,q_h) \in \bU_h \times Q_h.
\]
The inf-sup stability~\eqref{LBB}, coercivity of $\tilde A_h(\cdot,\cdot)$, and the Galerkin orthogonality results yield the usual bound for the discretization error in terms of an approximation error in our problem-dependent norms,
\begin{equation} \label{optimal_err}
\|\bu_T^\ast -\bu_h\|_A + \|p^\ast - p_h \|_h \lesssim \inf_{(\bv_h,q_h)\in \bU_h \times Q_h}\big(\|\bu_T^\ast -\bv_h\|_A + \|p^\ast - q_h \|_h\big).
\end{equation}
Employing standard interpolation estimates for $\vect P_2$ and $P_1$ trace finite elements~(see, e.g., \cite{reusken2015analysis, olshanskii2016trace}) and assuming the necessary smoothness of $(\bu_T^\ast,p^\ast)$, we get an estimate for the right-hand side of~\eqref{optimal_err}:
\begin{equation} \label{interp}
\inf_{(\bv_h,q_h)\in \bU_h \times Q_h}\big(\|\bu_T^\ast -\bv_h\|_A + \|p^\ast - q_h \|_h\big) \lesssim h^2 (\|\bu_T^\ast\|_{H^3(\Gamma)} +\|p^\ast\|_{H^2(\Gamma)}).
\end{equation}
For the $O(h^2)$ bound in \eqref{interp} to hold, it is sufficient to assume the following bounds for the parameters entering the definition of $\|\cdot\|_A$ and $\|\cdot\|_h$ norms:
$\tau\lesssim h^{-2}$, $\rho_u\lesssim h^{-1}$, and $\rho_p \lesssim h$. Combining these restrictions with those needed for stability, we conclude that, for the parameters satisfying~\eqref{parameters}, equations~\eqref{optimal_err} and \eqref{interp} yield the optimal error estimate in the problem-dependent norm
\begin{equation} \label{mainerrbound}
  \|\bu_T^\ast -\bu_h\|_A + \|p^\ast - p_h \|_h \lesssim h^2  (\|\bu_T^\ast\|_{H^3(\Gamma)} +\|p^\ast\|_{H^2(\Gamma)}).
\end{equation}
The definition of $\|\cdot\|_A$ and $\|\cdot\|_h$  norms  and \eqref{mainerrbound} together give
\begin{align*}
 \|\bu_T^\ast  -(\bu_h)_T\|_{H^1(\Gamma)}  + \|p^\ast  - p_h \|_{L^2(\Gamma)}  &\lesssim h^2 (\|\bu_T^\ast\|_{H^3(\Gamma)} +\|p^\ast\|_{H^2(\Gamma)}), \\
  \|\bu_h\cdot\bn \|_{L^2(\Gamma)}  &\lesssim h^3 (\|\bu_T^\ast\|_{H^3(\Gamma)} +\|p^\ast\|_{H^2(\Gamma)}).
\end{align*}
Using $H^2$-regularity from Lemma~\ref{Lemma:regularity} a duality argument can be applied, cf.~\cite{olshanskii2018finite}. It results in the optimal error bound in the surface $L^2$ norm for velocity:
\begin{equation} \label{L2error}
\|\bu_T^\ast  - (\bu_h)_T\|_{L^2(\Gamma)} \lesssim h^{3}(\|\bu_T^\ast\|_{H^3(\Gamma)} +\|p^\ast\|_{H^2(\Gamma)}).
\end{equation}

\section{Numerical experiments} \label{sec:num}

\let\oldtabular\tabular
\renewcommand{\tabular}[1][1.5]{\def\arraystretch{#1}\oldtabular}

We present several numerical examples, which illustrate the analysis of this paper. In particular, we calculate the optimal constant $c_0$ from the key inf-sup condition \eqref{LBB} and demonstrate that it is indeed bounded independent of $h$ and the position of $\Gamma$ against the background mesh. We show that our analysis is sharp in the sense that without the normal volume stabilization we do not have discrete inf-sup stability. Numerical results will be also presented that demonstrate the optimal convergence of the consistent variant of the method.

\subsection{Setup}
We choose $\Gamma$ to be either the unit sphere or a torus, $\Gamma = \sphere$ or $\Gamma = \tor$. The corresponding level-set functions are given by
\begin{equation}\label{lchoice}
	\lsphere(\vect x) \coloneqq  \|\vect x\|^2 - 1, \quad
	\ltor(\vect x) \coloneqq  (\|\vect x\|^2 + R^2 - r^2)^2 - 4\,R^2\,(x^2+y^2),
\end{equation}
with $R \coloneqq  1$ and $r \coloneqq  0.2$. The computational  domain is $\Omega \coloneqq  (-5/3,\,5/3)^3$ such that $\Gamma\subset\Omega$ for both examples. In all the experiments we set $\alpha = 1$ in \eqref{strongform-1}.
To build the initial triangulation $\T_{h_0}$ we divide $\Omega$ into $2^3$ cubes and further tessellate each cube into 6 tetrahedra; Thus we have $h_0 = \frac53$. The mesh is gradually refined towards the surface, and $\ell \in \mathbb{N}$ denotes the level of refinement, with the mesh size $h_\ell = \frac53\,2^{-\ell}$; see Figure~\ref{fig:gamma} for an illustration of the bulk meshes and the induced mesh on the embedded surface for three consecutive refinement levels.

\begin{figure}[ht!]
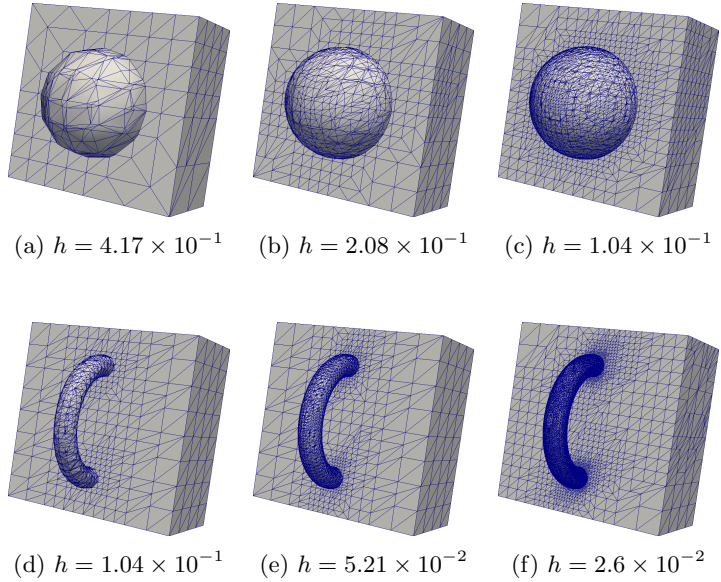

	\centering
	\begin{subfigure}{.25\linewidth}
		\centering
		\includegraphicsw[.9]{{lvl1.cropped}.png}
		\caption{$h = 4.17\times10^{-1}$}
	\end{subfigure}%
	\begin{subfigure}{.25\linewidth}
		\centering
		\includegraphicsw[.9]{{lvl2.cropped}.png}
		\caption{$h = 2.08\times10^{-1}$}
	\end{subfigure}%
	\begin{subfigure}{.25\linewidth}
		\centering
		\includegraphicsw[.9]{{lvl3.cropped}.png}
		\caption{$h = 1.04\times10^{-1}$}
	\end{subfigure}
	\par\bigskip
	\begin{subfigure}{.25\linewidth}
		\centering
		\includegraphicsw[.9]{{tor_lvl3.cropped}.png}
		\caption{$h = 1.04\times10^{-1}$}
	\end{subfigure}%
	\begin{subfigure}{.25\linewidth}
		\centering
		\includegraphicsw[.9]{{tor_lvl4.cropped}.png}
		\caption{$h = 5.21\times10^{-2}$}
	\end{subfigure}%
	\begin{subfigure}{.25\linewidth}
		\centering
		\includegraphicsw[.9]{{tor_lvl5.cropped}.png}
		\caption{$h = 2.6\times10^{-2}$}
	\end{subfigure}
	\caption{Cross  section of the bulk mesh and $\Gamma_h$ for~$\sphere$ (top) and $\tor$ (bottom), $\ell \in \{\,2, 3, 4\,\}$ and $\ell \in \{\,4, 5, 6\,\}$, respectively} \label{fig:gamma}
	\vskip -.5cm	
\end{figure}

In the following sections we numerically compute optimal  inf-sup stability constants for $\vect P_2$--$P_1$ and $\vect P_1$--$P_1$ trace FEM and show convergence results for $\vect P_2$--$P_1$ trace FEM.

\subsection{Discrete inf-sup stability}

Let $n \coloneqq \mbox{dim}(\bU_h)$ and $m \coloneqq \mbox{dim}(Q_h)$ be the number of velocity and pressure degrees of freedom (d.o.f.).
The velocity stiffness matrix~$\vect A \in \mathbb R^{n \times n}$, the divergence matrix~$\vect B \in \mathbb R^{m \times n}$, and pressure stabilization matrix~$\vect C_\star \in \mathbb R^{m \times m}$ satisfy
\begin{equation*}
\begin{split}
    &  \bar{\vect v}^T\vect A \bar{\vect u}=
   \left\{ \begin{split}
              A_h(\bu_h, \bv_h)  &\quad \text{for inconsistent method,} \\
              \widetilde{A}_h(\bu_h, \bv_h)  &\quad \text{for consistent method,}
           \end{split}
   \right.\quad
    \\[1ex]
      &  \bar{\vect q}^T\vect B \bar{\vect v}=b_T(q_h,\bv_h),
    \\
     &  \bar{\vect q}^T\vect C_\star \bar{\vect p}=s(q_h,p_h)
\end{split}
\end{equation*}
for all $\bu_h,\bv_h\in\bU_h$ and all $p_h, q_h\in Q_h$. Here $\bar{\vect u},\bar{\vect v} \in \mathbb R^{n}$ and $\bar{\vect p},\bar{\vect q} \in \mathbb R^{m}$ are the velocity and pressure vectors of coefficients for the corresponding finite element functions in a standard nodal basis.
The trace finite element method~\eqref{discrete}  results in the linear system
\begin{equation}\label{spsystem}
	\begin{bmatrix}
		\vect A & \phantom{-}\vect B^T \\
		\vect B & -\vect C_\star \\
	\end{bmatrix}
	\begin{bmatrix} \bar{\vect u} \\ \bar{\vect p} \end{bmatrix}
	=
	\begin{bmatrix} \phantom{-}\bar{\vect f} \\ -\bar{\vect g} \end{bmatrix},
\end{equation}
 where $\bar{\vect f} \in \mathbb R^{n}$ and $\bar{\vect g} \in \mathbb R^{m}$ are load vectors for the momentum and continuity equations right-hand sides, respectively.
 The pressure Schur complement matrix of \eqref{spsystem} is given by
\begin{equation}\label{schur}
\vect S_\star = \vect B\,\vect A^{-1}\,\vect B^{T} + \vect C_\star.
\end{equation}

We consider three different matrices~$\vect C_\star$ corresponding to three different choices of the stabilization form $s_h$:
\begin{enumerate}
	\item Volume normal stabilization given in~\eqref{forms}. We write $\vect C_\star = \vect C_n$ in this case;
	\item The full gradient stabilization given by the bilinear form in \eqref{FullStab} (Brezzi--Pitk\"{a}ranta type stabilization). We write~$\vect C_\star = \vect C_{\text{full}}$;
	\item No stabilization, i.e.~$\vect C_\star = \vect C_0 \coloneqq  \vect 0$.
\end{enumerate}
We recall that the method analyzed in this paper corresponds to  $\vect C_\star = \vect C_n$.

The surface pressure mass matrix~$\vect M_0 \in \mathbb R^{m \times m}$ is defined through $\bar{\vect q}^T\vect M_0\,\bar{\vect p} = \int_{\Gamma} p_h\,q_h \diff{s}$. We also need auxiliary matrices
\begin{equation}
	\vect M_n \coloneqq  \vect M_0 + \vect C_n,\quad
	\vect M_{\text{full}} \coloneqq  \vect M_0 + \vect C_{\text{full}},
\end{equation}
which correspond to the natural norms used in the pressure space, e.g., $\vect M_n$ corresponds to $\|\cdot\|_h$ from \eqref{norms}.

We are interested in the generalized eigenvalue problem
\begin{equation}\label{problem}
	\vect S_\star\,\bar{\vect y} = \lambda\,\vect M_\star\,\bar{\vect y},
\end{equation}
where ``$\star$'' stands for ``$0$,'' ``$n$,'' or ``full.''  We use notation ~$0 = \lambda_1 < \lambda_2 \le \dots \le \lambda_{m} = O(1)$ for the  generalized  eigenvalues of~\eqref{problem}.
Straightforward calculations show (cf., e.g., \cite[Lemma~5.9]{olshanskii2014iterative}) that for $\vect S_\star=\vect S_n$, $\vect M_\star=\vect M_n$  the smallest non-zero eigenvalue equals $c_0^2$,
\[\lambda_2=c_0^2,\]
where $c_0^2$ is the optimal constant from the inf-sup stability condition \eqref{LBB} (with each term squared).

Assembling the Schur complement  matrix $\vect S_\star$ becomes prohibitively expensive even for rather small mesh sizes, since one needs to calculate $\vect A^{-1}$.
One possible solution is to write~\eqref{problem} in the mixed form:
\begin{equation} \begin{split}
 & \begin{bmatrix}
		\vect A & \phantom{-}\vect B^T \\
		\vect B & -\vect C_\star \\
	\end{bmatrix}
\begin{bmatrix} \bar{\vect x} \\ \bar{\vect y} \end{bmatrix}
=
-\lambda\:\,
\begin{bmatrix}
	\,\,\,\vect 0\:\,\, & \\
	& \vect M_\star
	\end{bmatrix}
\begin{bmatrix} \bar{\vect x} \\ \bar{\vect y} \end{bmatrix}, \\
\text{leading to}\quad &
%
\underbrace{\begin{bmatrix}
		\vect A & \phantom{-}\vect B^T \\
		\vect B & -\vect C_\star \\
	\end{bmatrix}}_{\mathcal A_\star}
\begin{bmatrix} \bar{\vect x} \\ \bar{\vect y} \end{bmatrix}
=
-\lambda_{\epsilon}
\underbrace{\begin{bmatrix}
	\epsilon\,\vect A & \\
	& \vect M_\star
	\end{bmatrix}}_{\mathcal M^\epsilon_\star}
\begin{bmatrix} \bar{\vect x} \\ \bar{\vect y} \end{bmatrix},
\label{problem_pert}
\end{split}
\end{equation}
with $0 < \epsilon \ll 1$.
Here we introduced an $\epsilon$ perturbation to the right-hand matrix to make it  Hermitian positive definite. In this form, the problem is suitable for any standard generalized eigenvalue solver that operates with sparse Hermitian  matrices. The spectrum of the perturbed problem consists of two sets of eigenvalues, $\mbox{sp}([\mathcal M^\epsilon_\star]^{-1}\mathcal A_\star)=\Lambda_\epsilon\cup \Lambda_{\epsilon^{-1}}$. The eigenvalues from the first set converge to those of \eqref{problem}:
\begin{equation*}
	\lambda_{\epsilon} = \lambda + o(1)\text{ as }\epsilon \rightarrow 0,\quad\text{with }\lambda_\epsilon \in \Lambda_\epsilon\text{ and }\lambda \in \mbox{sp}(\vect M_\star^{-1}\vect S_\star).
\end{equation*}
For the eigenvalues in the other set we have $-\lambda_\epsilon= O(\epsilon^{-1})$, $\lambda_\epsilon\in \Lambda_{\epsilon^{-1}}$.
This makes it straightforward for $\epsilon\ll 1$ to identify the eigenvalues we are interested in. To simplify the computation further, we replace the $(1, 1)$-block of~$\mathcal M^\epsilon_\star$ by $\epsilon\,\vect I$.

To check that our computations are stable with respect to small $\epsilon$ and yield consistent results, we solve~\eqref{problem_pert} for~$\epsilon = 10^{-5}$ and~$\epsilon = 10^{-6}$. It turns out that we obtain very close results. Furthermore, for the coarse mesh levels, when solving~\eqref{problem} is feasible, we also check that the dense solver for~\eqref{problem} and the iterative one for~\eqref{problem_pert} with $\epsilon = 10^{-6}$ give eigenvalues that coincide at least up to the  first five significant digits.

Tables~\ref{tab:p2p1} and~\ref{tab:p1p1} report $\lambda_2$ (i.e. the inf-sup stability constant) and $\lambda_m$ (i.e. the maximum eigenvalue so that $\lambda_m/\lambda_2$ defines the effective condition number) for the following methods:  1) consistent $\vect P_2$--$P_1$ trace finite element method (studied in this paper); 2)  $\vect P_1$--$P_1$ trace finite element method from~\cite{olshanskii2018finite}. For both discretizations we solve the eigenvalue problem \eqref{problem_pert} with different matrices $\vect C_\star$ which correspond to three choices of pressure stabilization (see above).

\begin{table}[ht]
	\centering\footnotesize
	\caption{Extreme eigenvalues of~\eqref{problem} for consistent $\vect P_2$--$P_1$ trace finite element method, $\tau = h^{-2}$, $\rho_u = h^{-1}$, $\rho_p = h$}
	\label{tab:p2p1}
	\begin{subtable}{1.\linewidth}
		\centering
		\begin{tabular}[1.2]{|c|c|c|c|c|c|c|c|c|}
			\hline
			\multicolumn{9}{|c|}{$\Gamma = \sphere$} \\
			\hline
			\multirow{2}{*}{$h$} & \multirow{2}{*}{$n$} & \multirow{2}{*}{$m$} & \multicolumn{2}{c|}{$\vect S_0$} & \multicolumn{2}{c|}{$\vect S_n$} & \multicolumn{2}{c|}{$\vect S_{\text{full}}$} \\
			\cline{4-9}
			& & & $\lambda_2$ & $\lambda_{m}$ & $\lambda_2$ & $\lambda_{m}$ & $\lambda_2$ & $\lambda_{m}$ \\
			\hline
			$8.33\times	10^{-1}$	&	$789$	&	$51$	&	$2.33\times	10^{-1}$	&	$1.07$	&	$6.3\times	10^{-1}$	&	$1.$	&	$8.81\times	10^{-1}$	&	$1.$	\\ \hline
$4.17\times	10^{-1}$	&	$3276$	&	$190$	&	$4.72\times	10^{-2}$	&	$6.97\times	10^{-1}$	&	$5.29\times	10^{-1}$	&	$1.$	&	$7.64\times	10^{-1}$	&	$1.$	\\ \hline
$2.08\times	10^{-1}$	&	$11718$	&	$664$	&	$7.93\times	10^{-2}$	&	$6.7\times	10^{-1}$	&	$5.09\times	10^{-1}$	&	$1.$	&	$6.39\times	10^{-1}$	&	$1.$	\\ \hline
$1.04\times	10^{-1}$	&	$48762$	&	$2764$	&	$3.71\times	10^{-2}$	&	$6.69\times	10^{-1}$	&	$5.03\times	10^{-1}$	&	$1.$	&	$5.73\times	10^{-1}$	&	$1.$	\\ \hline
$5.21\times	10^{-2}$	&	$193086$	&	$10912$	&	$1.81\times	10^{-3}$	&	$6.68\times	10^{-1}$	&	$4.98\times	10^{-1}$	&	$1.$	&	$5.36\times	10^{-1}$	&	$1.$	\\ \hline
$2.6\times	10^{-2}$	&	$775998$	&	$43864$	&	$6.65\times	10^{-4}$	&	$6.65\times	10^{-1}$	&	$4.92\times	10^{-1}$	&	$1.$	&	$5.17\times	10^{-1}$	&	$1.$	\\ \hline
		\end{tabular}
	\end{subtable}%

	\begin{subtable}{1.\linewidth}
		\centering
		\begin{tabular}[1.2]{|c|c|c|c|c|c|c|c|c|}
			\hline
			\multicolumn{9}{|c|}{$\Gamma = \tor$} \\
			\hline
			\multirow{2}{*}{$h$} & \multirow{2}{*}{$n$} & \multirow{2}{*}{$m$} & \multicolumn{2}{c|}{$\vect S_0$} & \multicolumn{2}{c|}{$\vect S_n$} & \multicolumn{2}{c|}{$\vect S_{\text{full}}$} \\
			\cline{4-9}
			& & & $\lambda_2$ & $\lambda_{m}$ & $\lambda_2$ & $\lambda_{m}$ & $\lambda_2$ & $\lambda_{m}$ \\
			\hline
			$2.08\times	10^{-1}$	&	$5580$	&	$324$	&	$2.15\times	10^{-1}$	&	$9.56\times	10^{-1}$	&	$3.12\times	10^{-1}$	&	$1.$	&	$3.4\times	10^{-1}$	&	$1.$	\\ \hline
$1.04\times	10^{-1}$	&	$28116$	&	$1580$	&	$1.59\times	10^{-2}$	&	$7.6\times	10^{-1}$	&	$3.21\times	10^{-1}$	&	$1.$	&	$3.35\times	10^{-1}$	&	$1.$	\\ \hline
$5.21\times	10^{-2}$	&	$116592$	&	$6568$	&	$1.31\times	10^{-3}$	&	$7.48\times	10^{-1}$	&	$3.21\times	10^{-1}$	&	$1.$	&	$3.26\times	10^{-1}$	&	$1.$	\\ \hline
$2.6\times	10^{-2}$	&	$477708$	&	$26936$	&	$1.9\times	10^{-4}$	&	$7.42\times	10^{-1}$	&	$3.2\times	10^{-1}$	&	$1.$	&	$3.22\times	10^{-1}$	&	$1.$	\\ \hline
		\end{tabular}
	\end{subtable}%
\end{table}

\begin{table}[ht]
	\centering\footnotesize
	\caption{Extreme eigenvalues of~\eqref{problem} for $\vect P_1$--$P_1$ trace finite element method, $\tau = h^{-2}$, $\rho_u = h$, $\rho_p = h$}
	\label{tab:p1p1}
	\begin{subtable}{1.\linewidth}
		\centering
		\begin{tabular}[1.2]{|c|c|c|c|c|c|c|c|c|}
			\hline
			\multicolumn{9}{|c|}{$\Gamma = \sphere$} \\
			\hline
			\multirow{2}{*}{$h$} & \multirow{2}{*}{$n$} & \multirow{2}{*}{$m$} & \multicolumn{2}{c|}{$\vect S_0$} & \multicolumn{2}{c|}{$\vect S_n$} & \multicolumn{2}{c|}{$\vect S_{\text{full}}$} \\
			\cline{4-9}
			& & & $\lambda_2$ & $\lambda_{m}$ & $\lambda_2$ & $\lambda_{m}$ & $\lambda_2$ & $\lambda_{m}$ \\
			\hline
			$8.33\times	10^{-1}$	&	$153$	&	$51$	&	$1.32\times	10^{-2}$	&	$1.42$	&	$7.48\times	10^{-1}$	&	$1.13$	&	$9.58\times	10^{-1}$	&	$1.06$	\\ \hline
$4.17\times	10^{-1}$	&	$570$	&	$190$	&	$5.12\times	10^{-3}$	&	$1.04$	&	$5.77\times	10^{-1}$	&	$1.$	&	$8.54\times	10^{-1}$	&	$1.$	\\ \hline
$2.08\times	10^{-1}$	&	$1992$	&	$664$	&	$4.4\times	10^{-3}$	&	$7.93\times	10^{-1}$	&	$3.87\times	10^{-1}$	&	$1.$	&	$6.71\times	10^{-1}$	&	$1.$	\\ \hline
$1.04\times	10^{-1}$	&	$8292$	&	$2764$	&	$2.01\times	10^{-3}$	&	$7.79\times	10^{-1}$	&	$2.19\times	10^{-1}$	&	$1.$	&	$5.82\times	10^{-1}$	&	$1.$	\\ \hline
$5.21\times	10^{-2}$	&	$32736$	&	$10912$	&	$6.04\times	10^{-5}$	&	$9.81\times	10^{-1}$	&	$1.17\times	10^{-1}$	&	$1.$	&	$5.37\times	10^{-1}$	&	$1.$	\\ \hline
$2.6\times	10^{-2}$	&	$131592$	&	$43864$	&	$3.53\times	10^{-5}$	&	$8.67\times	10^{-1}$	&	$5.72\times	10^{-2}$	&	$1.$	&	$5.16\times	10^{-1}$	&	$1.$	\\ \hline
$1.3\times	10^{-2}$	&	$525864$	&	$175288$	&	$2.16\times	10^{-6}$	&	$7.34\times	10^{-1}$	&	$2.84\times	10^{-2}$	&	$1.$	&	$5.04\times	10^{-1}$	&	$1.$	\\ \hline
		\end{tabular}
	\end{subtable}%

	\begin{subtable}{1.\linewidth}
		\centering
		\begin{tabular}[1.2]{|c|c|c|c|c|c|c|c|c|}
			\hline
			\multicolumn{9}{|c|}{$\Gamma = \tor$} \\
			\hline
			\multirow{2}{*}{$h$} & \multirow{2}{*}{$n$} & \multirow{2}{*}{$m$} & \multicolumn{2}{c|}{$\vect S_0$} & \multicolumn{2}{c|}{$\vect S_n$} & \multicolumn{2}{c|}{$\vect S_{\text{full}}$} \\
			\cline{4-9}
			& & & $\lambda_2$ & $\lambda_{m}$ & $\lambda_2$ & $\lambda_{m}$ & $\lambda_2$ & $\lambda_{m}$ \\
			\hline
			$2.08\times	10^{-1}$	&	$972$	&	$324$	&	$5.04\times	10^{-2}$	&	$4.93$	&	$2.84\times	10^{-1}$	&	$1.35$	&	$3.64\times	10^{-1}$	&	$1.19$	\\ \hline
$1.04\times	10^{-1}$	&	$4740$	&	$1580$	&	$2.99\times	10^{-3}$	&	$3.83$	&	$1.58\times	10^{-1}$	&	$1.02$	&	$3.35\times	10^{-1}$	&	$1.01$	\\ \hline
$5.21\times	10^{-2}$	&	$19704$	&	$6568$	&	$1.11\times	10^{-3}$	&	$5.45$	&	$7.73\times	10^{-2}$	&	$1.01$	&	$3.25\times	10^{-1}$	&	$1.$	\\ \hline
$2.6\times	10^{-2}$	&	$80808$	&	$26936$	&	$1.2\times	10^{-4}$	&	$5.42$	&	$3.07\times	10^{-2}$	&	$1.01$	&	$3.21\times	10^{-1}$	&	$1.$	\\ \hline
$1.3\times	10^{-2}$	&	$327036$	&	$109012$	&	$1.77\times	10^{-5}$	&	$5.23$	&	$1.18\times	10^{-2}$	&	$1.01$	&	$3.16\times	10^{-1}$	&	$1.$	\\ \hline
		\end{tabular}
	\end{subtable}
\end{table}
For experiments with  $\vect P_2$--$P_1$  elements we choose parameters satisfying \eqref{parameters}. In particular, we set $\rho_u=h^{-1}$ which is the upper extreme for admissible parameters, since for smaller $\rho_u$, the stability constant $c_0$ from \eqref{LBB} can only increase. We otherwise set $\rho_u =h$ for~$\vect P_1$--$P_1$ elements (which was the choice in~\cite{olshanskii2018finite}); if the resulting method is inf-sup unstable for $\rho_u \simeq h$, it has the same property also  for larger $\rho_u$.  In practice, to avoid extra dissipation, one would like to choose $\rho_u$ small but sufficient to control the condition number of $\bA$ ($\rho_u \simeq h$ suits this purpose). We covered the larger range of $\rho_u$ by our analysis, since this may be needed if the $\vect P_2$--$P_1$  trace FEM is extended to fluid problems on deformable surfaces.

From Table~\ref{tab:p2p1}, which shows results for $\vect P_2$--$P_1$ trace elements, we see that for $\vect C_0$ (no pressure stabilization) $\lambda_2$ tends to zero with mesh refinement, which indicates that the discretization   is not inf-sup stable. The normal gradient stabilization matrix~$\vect C_n$ is sufficient for the inf-sup stability, $\lambda_2$ is uniformly bounded from below, which confirms the main result of this paper.
Of course, including the  full pressure gradient term also leads to a stable method, but in this case, the method has consistency errors that are suboptimal.

For the two cases $\Gamma = \sphere$  and $\Gamma = \tor$ the behavior is essentially the same.
From Table~\ref{tab:p1p1} we see that only full gradient stabilization matrix~$\vect C_{\text{full}}$ guarantees inf-sup stability of~$\vect P_1$--$P_1$ trace elements, which is different to the situation with $\vect P_2$--$P_1$ trace elements.

Next, we illustrate our claim that the optimal inf-sup stability constant $c_0$ in \eqref{LBB} is  uniformly bounded with respect to the position of $\Gamma$ in  the background mesh.  To this end, we introduce a set of translated surfaces
\begin{equation}\label{shift}
	\Gamma \mapsto \Gamma + \alpha\,\vect s,
\end{equation}
with some $\alpha \in \mathbb R$ and $\vect s \in \mathbb R^3$, $\|\vect s\| = 1$; see Figure~\ref{fig:shift}.

\begin{figure}[ht]
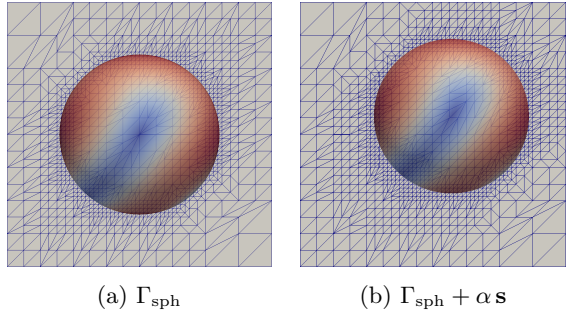

	\centering
	\begin{subfigure}{.2\linewidth}\end{subfigure}%
	\begin{subfigure}{.3\linewidth}
		\centering
		\includegraphicsw[.9]{{shift_0.0.cropped}.png}
		\caption{$\sphere$}
	\end{subfigure}%
	\begin{subfigure}{.3\linewidth}
		\centering
		\includegraphicsw[.9]{{shift_0.4.cropped}.png}
		\caption{$\sphere + \alpha\,\vect s$}
	\end{subfigure}%
	\begin{subfigure}{.2\linewidth}\end{subfigure}%
	\caption{Unit sphere (left) and the shifted unit sphere (right). Here $\vect s = (1, 1, 1)^T/\sqrt{3}$, $\alpha = 0.4$, and $h = 5.21\times10^{-2}$} \label{fig:shift}
	\vskip -.5cm	
\end{figure}

We repeat eigenvalue computations for the consistent $\vect P_2$--$P_1$ trace finite element method, with a fixed mesh size~$h = 1.04\times10^{-1}$ and a varying  translation parameter~$\alpha$ in~\eqref{shift}. Results are reported in Table~\ref{tab:p1p1_shift_h=0.104167}.

\begin{table}[ht]
	\centering
	\caption{Extreme eigenvalues of~\eqref{problem} for perturbed surface $\Gamma + \alpha\,\vect s$ for consistent $\vect P_2$--$P_1$ trace finite element method, $\tau = h^{-2}$, $\rho_u = h^{-1}$, $\rho_p = h$. Here $\vect s = (1, 1, 1)^T/\sqrt{3}$, $h = 1.04\times10^{-1}$}
	\label{tab:p1p1_shift_h=0.104167}
	\footnotesize
	\begin{subtable}{1.\linewidth}
		\centering
		\begin{tabular}[1.2]{|c|c|c|c|c|}
			\hline
			\multirow{2}{*}{Surface} & \multicolumn{2}{c|}{$\vect S_0$} & \multicolumn{2}{c|}{$\vect S_n$} \\
			\cline{2-5}
			& $\lambda_2$ & $\lambda_{m}$ & $\lambda_2$ & $\lambda_{m}$ \\
			\hline
			$\sphere + 0.0\,\vect s$	&	$3.71\times	10^{-2}$	&	$6.69\times	10^{-1}$	&	$5.03\times	10^{-1}$	&	$1.$	\\ \hline
$\sphere + 0.1\,\vect s$	&	$1.31\times	10^{-3}$	&	$6.87\times	10^{-1}$	&	$5.03\times	10^{-1}$	&	$1.$	\\ \hline
$\sphere + 0.2\,\vect s$	&	$1.25\times	10^{-3}$	&	$6.70\times	10^{-1}$	&	$5.03\times	10^{-1}$	&	$1.$	\\ \hline
$\sphere + 0.3\,\vect s$	&	$1.04\times	10^{-2}$	&	$6.72\times	10^{-1}$	&	$5.03\times	10^{-1}$	&	$1.$	\\ \hline
$\sphere + 0.4\,\vect s$	&	$5.32\times	10^{-4}$	&	$6.72\times	10^{-1}$	&	$5.03\times	10^{-1}$	&	$1.$	\\ \hline 
		\end{tabular}
	\end{subtable}
	\begin{subtable}{1.\linewidth}
		\centering
		\begin{tabular}[1.2]{|c|c|c|c|c|}
			\hline
			\multirow{2}{*}{Surface} & \multicolumn{2}{c|}{$\vect S_0$} & \multicolumn{2}{c|}{$\vect S_n$} \\
			\cline{2-5}
			& $\lambda_2$ & $\lambda_{m}$ & $\lambda_2$ & $\lambda_{m}$ \\
			\hline
			$\tor + 0.00\,\vect s$	&	$1.59\times	10^{-2}$	&	$7.6\times	10^{-1}$	&	$3.21\times	10^{-1}$	&	$1.$	\\ \hline
$\tor + 0.05\,\vect s$	&	$9.20\times	10^{-3}$	&	$1.14$	&	$3.21\times	10^{-1}$	&	$1.$	\\ \hline
$\tor + 0.10\,\vect s$	&	$3.00\times	10^{-3}$	&	$1.91$	&	$3.19\times	10^{-1}$	&	$1.$	\\ \hline
$\tor + 0.15\,\vect s$	&	$8.67\times	10^{-3}$	&	$1.02$	&	$3.21\times	10^{-1}$	&	$1.$	\\ \hline
$\tor + 0.20\,\vect s$	&	$6.68\times	10^{-3}$	&	$3.04$	&	$3.21\times	10^{-1}$	&	$1.$	\\ \hline 
		\end{tabular}
	\end{subtable}
\end{table}

The results in Table~\ref{tab:p1p1_shift_h=0.104167} confirm the robustness of the inf-sup stability constant with respect to the position of $\Gamma$ for the method with   normal gradient stabilization.

\subsection{Convergence results}

We set $\Gamma = \sphere$ and define
\begin{equation}\label{exact_soln_2}
	\tilde{\vect u}(x, y, z) \coloneqq  (-z^2, y, x)^T, \quad
	\tilde p(x, y, z) \coloneqq  x\,y^2 + z,\quad
	\vect u \coloneqq  \vect P\,\tilde{\vect u}^e, \quad
	p \coloneqq  \tilde p^e.
\end{equation}
Thus we have~$\int_\Gamma p \diff{\vect x} = 0$, $p \equiv p^e$, $\vect u \equiv \vect u^e$ in $\mathcal O(\Gamma)$, and $\vect u$ is a tangential vector field.

Note that for our choice of~$\lsphere$ in~\eqref{lchoice} we have~$\vect n = \nabla \lsphere/\|\nabla \lsphere\| \equiv \vect n^e$ in $\mathcal O(\Gamma)$. We use $P_2$ nodal interpolant~$I^2_h(\cdot)$ defined in~$\T_h^\Gamma$ to approximate the level-set function in our implementation.
Note that for the case of the sphere~$\lsphere \in P_2(\mathbb R^3)$ implies~$I^2_h(\lsphere) = \lsphere$, and so the normal vector and related operators $\vect P$, $E_s$, $\vect H$, and $\nabla_\Gamma$ are computed exactly.

To approximate the domain of integration, we use a sufficiently refined piecewise planar approximation of~$\Gamma$,
\begin{equation} \label{surf_int}
	\Gamma_h \coloneqq  \{ \vect x \in \mathbb R^3\::\:\big(I^1_h(\phi)\big)(\vect x) = 0\}.
\end{equation}
For the integration, we use~$\Gamma_{h/m}$ with $m \simeq h^{-1}$ so that we have an $O(h^4)$-accurate approximation of~$\Gamma$.

\begin{figure}[h]
	\centering
	\begin{subfigure}{.1\linewidth}\end{subfigure}%
	\begin{subfigure}{.4\linewidth}
		\centering
		\includegraphicsw[.8]{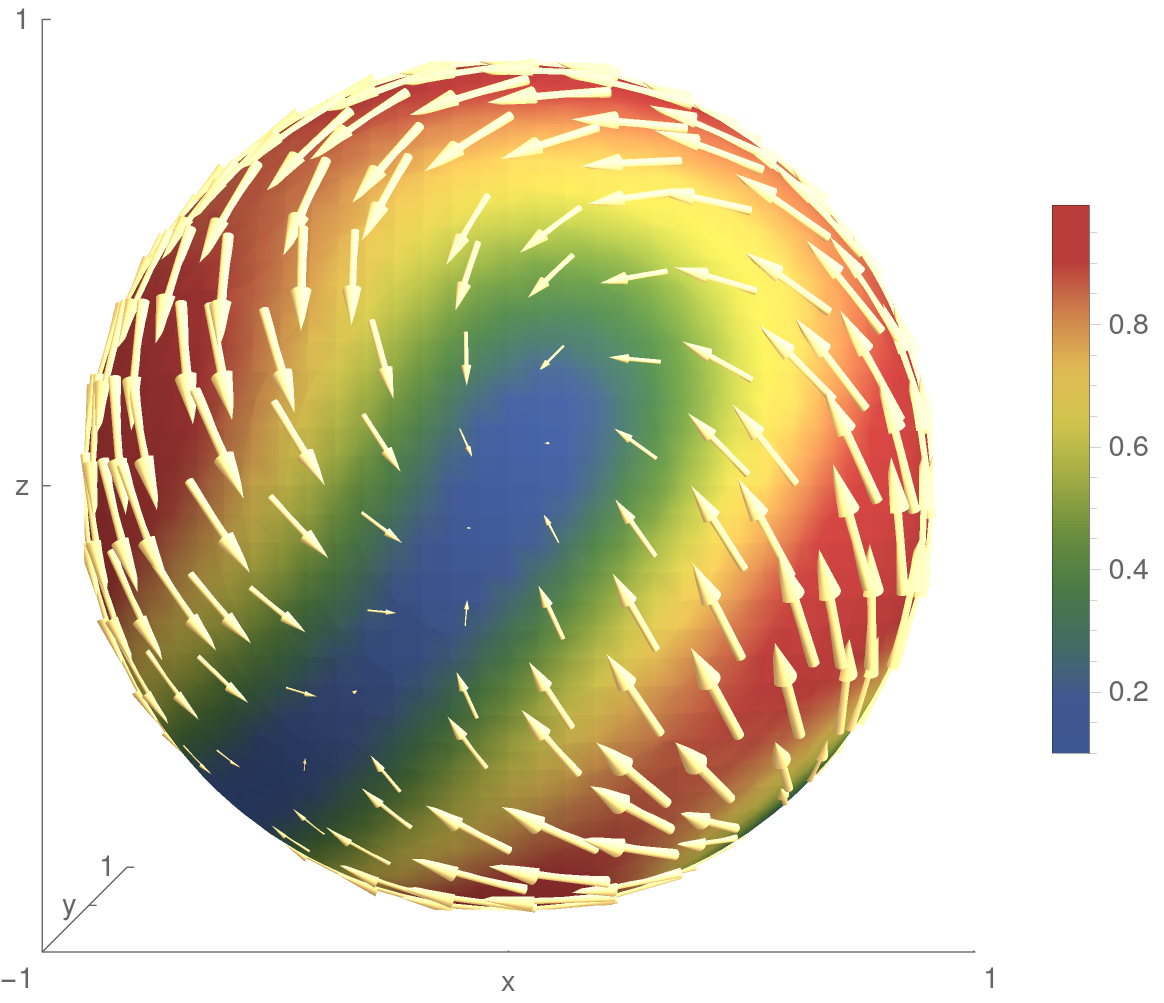}
	\end{subfigure}%
	\begin{subfigure}{.4\linewidth}
		\centering
		\includegraphicsw[.8]{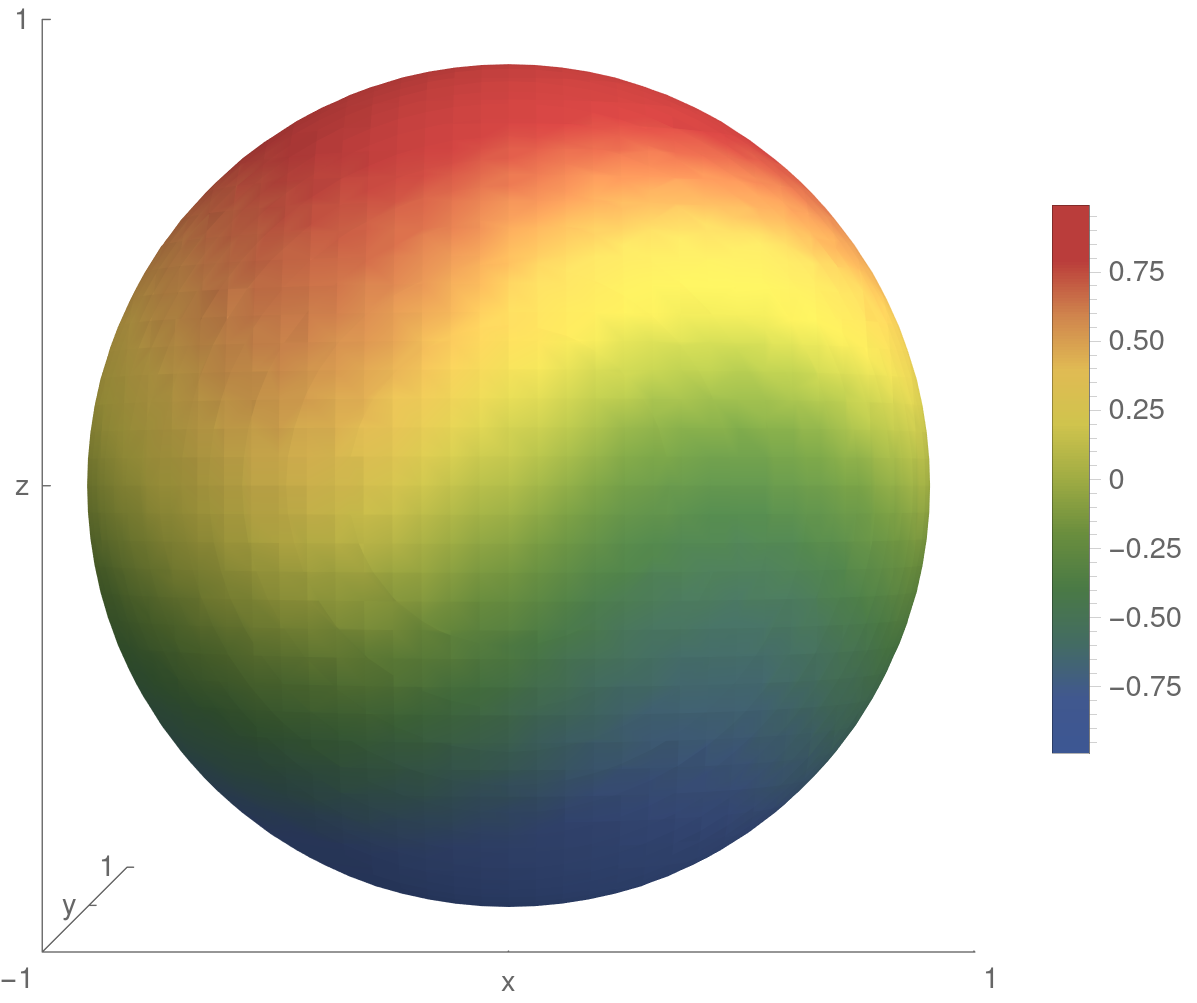}
	\end{subfigure}%
	\begin{subfigure}{.1\linewidth}\end{subfigure}%
	\caption{Exact velocity solution (left) and pressure solution (right) as in~\eqref{exact_soln_2}} \label{fig:soln}	
	\vskip -.5cm	
\end{figure}

The resulting linear system~\eqref{spsystem} is solved with MINRES as an outer solver and the  block-diagonal preconditioner as defined in~\cite[p.~19]{olshanskii2018finite}.

\begin{table}[ht]
	\centering\footnotesize
	\caption{Convergence results for consistent~$\vect P_2$--$P_1$ formulation, $\tau = h^{-2}$, $\rho_u = h^{-1}$, $\rho_p = h$, and $\vect C_{\star} = \vect C_n$}
	\label{tab:p2p1_cons_h^-1}
	\begin{subtable}{1.\linewidth}\centering
		\begin{tabular}[1.2]{|c||c|c||c|c||c||c|}
			\hline
			$h$ & $\|\vect u - \vect u_h\|_{\HOne}$ & Order & $\|\vect u - \vect u_h\|_{\LTwo}$ & Order & $\|p - p_h\|_{\LTwo}$ & Order \\
			\hline
            $8.3\times	10^{-1}$	&	$2.2$	&	$\text{}$	&	$6.4\times	10^{-1}$	&	$\text{}$	&	$7.4\times	10^{-1}$	&	$\text{}$	\\ \hline
$4.2\times	10^{-1}$	&	$3.8\times	10^{-1}$	&	$2.5$	&	$6.1\times	10^{-2}$	&	$3.4$	&	$1.2\times	10^{-1}$	&	$2.6$	\\ \hline
$2.1\times	10^{-1}$	&	$9.2\times	10^{-2}$	&	$2.1$	&	$5.8\times	10^{-3}$	&	$3.4$	&	$2.5\times	10^{-2}$	&	$2.2$	\\ \hline
$1.\times	10^{-1}$	&	$2.2\times	10^{-2}$	&	$2.1$	&	$5.6\times	10^{-4}$	&	$3.4$	&	$6.1\times	10^{-3}$	&	$2.1$	\\ \hline
$5.2\times	10^{-2}$	&	$5.3\times	10^{-3}$	&	$2.$	&	$5.2\times	10^{-5}$	&	$3.4$	&	$1.6\times	10^{-3}$	&	$1.9$	\\ \hline
$2.6\times	10^{-2}$	&	$1.3\times	10^{-3}$	&	$2.$	&	$5.2\times	10^{-6}$	&	$3.3$	&	$4.1\times	10^{-4}$	&	$2.$	\\ \hline
$1.3\times	10^{-2}$	&	$3.4\times	10^{-4}$	&	$2.$	&	$6.\times	10^{-7}$	&	$3.1$	&	$1.\times	10^{-4}$	&	$2.$	\\ \hline
		\end{tabular}
	\end{subtable}

	\begin{subtable}{1.\linewidth}\centering
		\begin{tabular}[1.2]{|c||c|c||c||c|}
			\hline
			$h$ & $\| \vect u_h\cdot\vect n \|_{\LTwo}$ & Order & Outer iterations & Residual norm \\
			\hline
			$8.33\times	10^{-1}$	&	$4.5\times	10^{-1}$	&	$\text{}$	&	$26$	&	$2.6\times	10^{-9}$	\\ \hline
$4.17\times	10^{-1}$	&	$5.3\times	10^{-2}$	&	$3.1$	&	$33$	&	$5.1\times	10^{-9}$	\\ \hline
$2.08\times	10^{-1}$	&	$4.9\times	10^{-3}$	&	$3.4$	&	$31$	&	$6.\times	10^{-9}$	\\ \hline
$1.04\times	10^{-1}$	&	$5.\times	10^{-4}$	&	$3.3$	&	$27$	&	$7.3\times	10^{-9}$	\\ \hline
$5.21\times	10^{-2}$	&	$4.9\times	10^{-5}$	&	$3.4$	&	$25$	&	$6.4\times	10^{-9}$	\\ \hline
$2.6\times	10^{-2}$	&	$5.\times	10^{-6}$	&	$3.3$	&	$26$	&	$4.3\times	10^{-9}$	\\ \hline
$1.3\times	10^{-2}$	&	$5.8\times	10^{-7}$	&	$3.1$	&	$34$	&	$7.8\times	10^{-9}$	\\ \hline
		\end{tabular}
	\end{subtable}
\end{table}

\begin{table}[ht]
	\centering\footnotesize
	\caption{Convergence results for consistent~$\vect P_2$--$P_1$ formulation, $\tau = h^{-2}$, $\rho_u = h^{-1}$, $\rho_p = h$, and $\vect C_{\star} = \vect C_{\text{full}}$}
	\label{tab:p2p1_cons_h^-1_full}
	\begin{subtable}{1.\linewidth}\centering
		\begin{tabular}[1.2]{|c||c|c||c|c||c||c|}
			\hline
			$h$ & $\|\vect u - \vect u_h\|_{\HOne}$ & Order & $\|\vect u - \vect u_h\|_{\LTwo}$ & Order & $\|p - p_h\|_{\LTwo}$ & Order \\
			\hline
			$8.3\times	10^{-1}$	&	$1.6$	&	$\text{}$	&	$7.8\times	10^{-1}$	&	$\text{}$	&	$1.3$	&	$\text{}$	\\ \hline
$4.2\times	10^{-1}$	&	$6.9\times	10^{-1}$	&	$1.2$	&	$3.9\times	10^{-1}$	&	$1.$	&	$8.1\times	10^{-1}$	&	$6.3\times	10^{-1}$	\\ \hline
$2.1\times	10^{-1}$	&	$2.4\times	10^{-1}$	&	$1.5$	&	$1.3\times	10^{-1}$	&	$1.6$	&	$3.1\times	10^{-1}$	&	$1.4$	\\ \hline
$1.\times	10^{-1}$	&	$8.1\times	10^{-2}$	&	$1.6$	&	$3.6\times	10^{-2}$	&	$1.8$	&	$1.1\times	10^{-1}$	&	$1.5$	\\ \hline
$5.2\times	10^{-2}$	&	$2.4\times	10^{-2}$	&	$1.8$	&	$9.5\times	10^{-3}$	&	$1.9$	&	$3.2\times	10^{-2}$	&	$1.7$	\\ \hline
$2.6\times	10^{-2}$	&	$6.5\times	10^{-3}$	&	$1.9$	&	$2.4\times	10^{-3}$	&	$2.$	&	$8.8\times	10^{-3}$	&	$1.9$	\\ \hline
$1.3\times	10^{-2}$	&	$1.8\times	10^{-3}$	&	$1.8$	&	$6.1\times	10^{-4}$	&	$2.$	&	$2.5\times	10^{-3}$	&	$1.8$	\\ \hline
		\end{tabular}
	\end{subtable}

	\begin{subtable}{1.\linewidth}\centering
		\begin{tabular}[1.2]{|c||c|c||c||c|}
			\hline
			$h$ & $\| \vect u_h\cdot\vect n \|_{\LTwo}$ & Order & Outer iterations & Residual norm \\
			\hline
			$8.33\times	10^{-1}$	&	$3.5\times	10^{-1}$	&	$\text{}$	&	$15$	&	$1.3\times	10^{-9}$	\\ \hline
$4.17\times	10^{-1}$	&	$5.4\times	10^{-2}$	&	$2.7$	&	$21$	&	$3.5\times	10^{-9}$	\\ \hline
$2.08\times	10^{-1}$	&	$4.9\times	10^{-3}$	&	$3.4$	&	$27$	&	$5.5\times	10^{-9}$	\\ \hline
$1.04\times	10^{-1}$	&	$5.\times	10^{-4}$	&	$3.3$	&	$29$	&	$5.8\times	10^{-9}$	\\ \hline
$5.21\times	10^{-2}$	&	$4.9\times	10^{-5}$	&	$3.4$	&	$29$	&	$5.1\times	10^{-9}$	\\ \hline
$2.6\times	10^{-2}$	&	$5.\times	10^{-6}$	&	$3.3$	&	$28$	&	$8.4\times	10^{-9}$	\\ \hline
$1.3\times	10^{-2}$	&	$5.9\times	10^{-7}$	&	$3.1$	&	$34$	&	$9.1\times	10^{-9}$	\\ \hline
		\end{tabular}
	\end{subtable}
\end{table}
%

We first consider the  convergence rates of the  (consistent) $\vect P_2$--$P_1$ trace finite element given by \eqref{discrete}--\eqref{Aconsistent}. This corresponds to the volume normal derivative stabilization matrix~$\vect C_n$. Results are reported in Table~\ref{tab:p2p1_cons_h^-1}. From the table we see that the consistent formulation gives optimal convergence rates in all the norms as predicted by the error analysis in section~\ref{sec:error}.

Table~\ref{tab:p2p1_cons_h^-1_full} reports results of a further experiment in which  the volume normal derivative stabilization for the pressure is replaced by the \emph{full} gradient stabilization, i.e the stabilization matrix~$\vect C_{\rm full}$ is used. This option was discussed in Remark~\ref{rem:stab}. It is expected that this results in suboptimal convergence rates due to only $O(h^2)$ consistency of the added term. This is  what we see in Table~\ref{tab:p2p1_cons_h^-1_full}, which shows suboptimal rates in $\LTwoSpace$-velocity error norm.
In both cases reported in Tables~\ref{tab:p2p1_cons_h^-1} and~\ref{tab:p2p1_cons_h^-1_full}, the number of MINRES iterations is reasonable and stays uniformly bounded with respect to the variation of mesh parameter~$h$.

\section{Conclusions and outlook} \label{sec:concl}
 The paper presented the first stability analysis of a mixed pair of $\vect P_2$--$P_1$ finite elements for the discretization of a surface Stokes problem. For the finite element discretization of (Navier--)Stokes equations in Euclidean domains the lowest order stable Taylor--Hood element is one of the most popular methods.   The results of this paper prove stability and optimal order convergence for the trace variant of the  $\vect P_2$--$P_1$ Taylor--Hood element, i.e., this method is a good candidate for discretization of  fluid problems on manifolds.

 Numerical experiments (not shown here) indicate that the higher order trace Taylor--Hood pairs, combined with the volume normal derivative stabilization, are also stable. In a companion paper~\cite{JankuhnStokes2019} numerical results for higher order trace Taylor--Hood and a detailed comparison of the consistent and inconsistent variants are presented. That paper also addresses the issue of geometry approximation (using the parametric version of trace FEM) and the accuracy of the normal used in the penalty term. The latter issue was earlier analyzed  in~\cite{hansbo2016analysis,JankuhnVekorLaplace} for the vector Laplace surface problem.
 A rigorous error analysis including geometry errors, using the isoparametric trace FEM, will be studied by the authors in a forthcoming paper. 

\subsection*{Acknowledgement} M.O. and A.Z. were partially supported by NSF through the Division of Mathematical Sciences grant 1717516.

\bibliographystyle{siam}
\bibliography{main}{}

\end{document}